\theoremstyle{plain}
\newtheorem{thr}{Theorem}[section]
\newtheorem{lem}[thr]{Lemma}
\newtheorem{prop}[thr]{Proposition}
\newtheorem{conj}[thr]{Conjecture}
\theoremstyle{definition}
\newtheorem{defi}[thr]{Definition}
\def\P{\mathcal{P}}
\def\Q{\mathcal{Q}}
\def\vc{\overrightarrow}
\DeclareMathOperator{\ecc}{ecc}
\DeclareMathOperator{\rad}{rad}
\title{Extremal total distance of graphs of given radius I}
\author{Stijn Cambie\footnote{Department of Mathematics, Radboud University Nijmegen, Postbus 9010, 6500 GL Nijmegen, The Netherlands. Email: \href{mailto:stijn.cambie@hotmail.com}{stijn.cambie@hotmail.com}. This work has been supported by a Vidi Grant of the Netherlands Organization for Scientific Research (NWO), grant number $639.032.614$.} }%
\date{}
\begin{document}
	\definecolor{xdxdff}{rgb}{0.49019607843137253,0.49019607843137253,1.}
	\definecolor{ududff}{rgb}{0.30196078431372547,0.30196078431372547,1.}
	
	\tikzstyle{every node}=[circle, draw, fill=black!50,
	inner sep=0pt, minimum width=4pt]

	\maketitle

	\begin{abstract}
		In 1984, Plesn\'{i}k determined the minimum total distance for given order and diameter and characterized the extremal graphs and digraphs.
		We prove the analog for given order and radius, when the order is sufficiently large compared to the radius.
		This confirms asymptotically a conjecture of Chen et al.
		We also state an analog of the conjecture of Chen et al for digraphs and prove it for sufficiently large order.
	\end{abstract}

	\section{Introduction}

The total distance $W(G)$ of a graph $G$ equals the sum of distances between all unordered pairs of vertices, i.e. $W(G)=\sum_{\{u,v\} \subset V} d(u,v).$
In 1984, Plesn\'{i}k~\cite{P84} determined the minimum total distance among all graphs of order $n$ and diameter $d$. He did this both for graphs and digraphs and characterized the extremal examples.
In this paper we solve the analogous questions for given order $n$ and radius $r$, when $n$ is sufficiently large compared with $r$.

The extremal graphs attaining the minimal total distance among all graphs with order $n$ and radius $r \in \{1,2\}$ are easily characterized; complete graphs when $r=1$, complete graphs minus a maximum matching when $r=2$ and $2 \mid n$ and complete graphs minus a maximum matching and an additional edge adjacent to the vertex not in the maximum matching, when $r=2$ and $2 \nmid n.$
For $r\ge 3$ the question is harder and a conjecture of the extremal graphs was made by Chen, Wu and An~\cite{Chen}. 
Here $G_{n,r,s}$ is a cycle $C_{2r}$ in which we take blow-ups in $2$ consecutive vertices by cliques $K_s$ and $K_{n-2r+2-s}$, as defined in Section~\ref{not&def}.
\begin{conj}[\cite{Chen}]\label{conjchen}
	Let $n$ and $r$ be two positive integers with $n \ge 2r$ and $r \ge 3$. For any graph $G$ of order $n$ with radius $r$, $W(G) \ge W(G_{n,r,1})$. Equality holds if and only if $G \cong G_{n,r,s}$ where $1 \le s \le \frac{n-2r+2}{2}$.
\end{conj}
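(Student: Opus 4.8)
The plan is to break the statement into a value computation, a lower bound, and an equality analysis, using throughout that $n$ is large compared with $r$. First I would record the identity $W(G)=\binom{n}{2}+\sum_{uv\notin E(G)}\big(d(u,v)-1\big)$, so that minimising $W$ amounts to keeping $G$ dense while keeping the distances across its non-edges small. A direct count on $G_{n,r,s}$ (almost all pairs lie inside the clique $K_{n-2r+2}$ and contribute $1$, and the remaining contributions form the ``tent'' of distances from each clique vertex to the $2r-2$ path vertices) gives $W(G_{n,r,s})=\binom{n}{2}+(n-2r+2)(r-1)^2+c(r)$ for an explicit $c(r)$ that does not involve $n$. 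The independence of $s$ I would obtain for free from the reflection automorphism of the underlying $C_{2r}$ interchanging the two blown-up vertices: it shows that every clique vertex has the same transmission regardless of which side of the partition it lies on, while the intra-clique and path-path contributions never involve $s$. This also explains $G_{n,r,s}\cong G_{n,r,n-2r+2-s}$ and hence the range $1\le s\le\frac{n-2r+2}{2}$.

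For the lower bound, the naive per-vertex estimate (each $u$ has $\ecc(u)\ge r$, hence a geodesic $u=u_0,\dots,u_r$ forcing $\sum_v\big(d(u,v)-1\big)\ge\binom{r}{2}$) already yields the correct order $\Theta(nr^2)$ but only about a quarter of the required constant, because it credits each vertex with a single one-sided spine. The true extremal configuration is two-sided: from a clique vertex the $2r-2$ path vertices appear as a tent of height $r$ reached from both directions around the $2r$-cycle, and this is exactly what doubles $\binom{r}{2}$ up to $(r-1)^2$. Capturing this factor is, to my mind, the heart of the matter. Rather than prove a tight uniform per-vertex bound, I would exploit $n\gg r$: any competitor $G$ with $W(G)\le W(G_{n,r,1})$ has at most $O(nr^2)$ non-edges, so a cleaning argument extracts a clique $Q$ on $n-o(n)$ vertices, which the radius structure then refines to $n-O(r)$.

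With the bulk clique $Q$ in hand, the radius constraint does the work: every vertex of $Q$ has eccentricity $\ge r$, so $G$ must contain a handle that pushes some vertex to distance $r$ from each clique vertex simultaneously. I would then set up the finite optimisation over handles supported on the $O(r)$ non-clique and boundary vertices: a pendant path forces a full distance-ramp $2,3,\dots$ and costs about $2r^2$ per clique vertex, whereas a handle closing a $2r$-cycle through two clique vertices realises the tent and costs only $(r-1)^2$, and every other attachment is shown to be no cheaper by a rearrangement/convexity argument on the aggregated profile $\sum_{u\in Q}\big(n_2^u,n_3^u,\dots\big)$, where $n_i^u=\lvert\{v:d(u,v)=i\}\rvert$. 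This should pin both the leading term $(n-2r+2)(r-1)^2$ and, after exact bookkeeping of the $O(r)$ handle vertices, the constant $c(r)$.

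The main obstacle I anticipate is the endgame rigidity: upgrading ``$Q$ is a clique of size $n-O(r)$ with a cheapest handle'' to ``$G$ is exactly some $G_{n,r,s}$'', and proving the inequality strict off this family. Concretely I expect to need, for $n$ large, that (i) no non-edge survives inside the bulk, (ii) the handle has exactly $2r-2$ internal vertices attached to exactly two clique vertices along a single geodesic cycle of length $2r$, and (iii) any shortening, lengthening, branching, or extra chord strictly increases $W$. Step (iii) is where the two-sidedness resurfaces and where the large-$n$ hypothesis is essential, since each unit of structural deviation perturbs the $\Theta(n)$ clique-to-handle distances; carrying these estimates precisely enough to separate $G_{n,r,s}$ from its nearest competitors, rather than merely up to lower-order error, is the delicate part.
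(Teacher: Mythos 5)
Your value computation and your quantitative intuitions are sound: $W(G_{n,r,s})=\binom{n}{2}+(r-1)^2n-r(r-1)^2$, the naive per-vertex eccentricity bound loses a factor of about $4$, a pendant path costs about $2r^2$ per clique vertex while the cycle handle costs $(r-1)^2$, and the two-sidedness of the extremal configuration is indeed the crux. But there is a genuine gap at the step your entire lower bound rests on: from ``$G$ has at most $O(nr^2)$ non-edges'' you cannot extract a clique on $n-o(n)$ vertices. A graph whose complement is a disjoint union of cliques of size $\Theta(r^2)$ has only $O(nr^2)$ non-edges yet clique number $\Theta(n/r^2)$, and such examples exist with radius exactly $r$: replace the two cliques in $G_{n,r,s}$ by balanced complete multipartite graphs with parts of size $t$; this adds only about $nt/2$ to the total distance (same-part pairs are at distance $2$) while collapsing the clique number to roughly $n/t$. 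So any cleaning argument, whose only access to the hypothesis $W(G)\le W(G_{n,r,1})$ is through the count of non-edges, can guarantee at best a clique of order $\Theta(n/r^2)$ --- which is exactly what the paper's Lemma~\ref{lem1} proves, by the same greedy procedure you have in mind --- and your follow-up ``which the radius structure then refines to $n-O(r)$'' is asserted, not proved. That refinement essentially \emph{is} the theorem: your finite optimisation over handles only becomes available once the giant clique exists, and nothing in the proposal produces it.

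The paper's proof avoids ever establishing a giant clique. It uses the $\Theta(n/r^2)$ clique only to locate one vertex $v$ whose deletion changes neither the radius nor any other distance (Lemma~\ref{lem2}), then proves the increment inequality $W(G)\ge W(G\setminus v)+(n-1)+(r-1)^2$ together with an exact equality characterization: equality forces $v$ to be the midpoint vertex $w_1$ of a path $w_r\ldots w_1u_1\ldots u_r$ with $d_G(w_r,u_1)=d_G(w_1,u_r)=r$ and $v$ adjacent to every vertex off the path (Lemma~\ref{lem3}, proved by combining the two geodesics leaving $v$ via QM--AM --- this is where your ``two-sidedness'' is captured, per deleted vertex rather than per clique vertex). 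Iterating, one peels $n-n_0(r)$ vertices while maintaining $W(G_i)<\binom{i}{2}+ai$ with $a=(r-1)^2$; note that under this weaker inductive hypothesis the multipartite examples above genuinely exist, so a giant-clique claim is not even true in the inductive setting. A counting argument shows equality must hold at some step, equality pins $G_m\cong G_{m,r,s}$ by a short case analysis of attachments to the path, and the structure propagates back up to $G$. To rescue your architecture you would need either this peeling mechanism or a genuinely new argument for the clique of size $n-O(r)$; the edge-count route cannot supply it.
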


Although there are counterexamples to Conjecture~\ref{conjchen} when $n$ is small (as we discuss below), we will show that Conjecture~\ref{conjchen} is true asymptotically, i.e. when $n\ge n_1(r)$ for some value $n_1(r)$, in Section~\ref{AsProofChen}.
\begin{thr}\label{main}
	For any $r \ge 3$, there exists a value $n_1(r)$ such that for all $n \ge n_1(r)$ the following hold
	\begin{itemize}
		\item any graph $G$ of order $n$ with radius $r$ satisfies $W(G) \ge W(G_{n,r,1})$. Equality holds if and only if $G \cong G_{n,r,s}$ where $1 \le s \le \frac{n-2r+2}{2}$.
	\end{itemize}
	
\end{thr}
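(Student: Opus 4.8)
The plan is to fix a minimiser $G$ of $W$ over all graphs of order $n$ and radius $r$ (one exists by finiteness) and to show $G\cong G_{n,r,s}$ for some admissible $s$. The first move is to record the reformulation
\[
W(G)=\binom{n}{2}+\sum_{\{u,v\}\notin E(G)}\big(d(u,v)-1\big)=\binom{n}{2}+\tfrac12\sum_{v}\big(D(v)-(n-1)\big),
\]
where $D(v)=\sum_{u}d(u,v)$ is the transmission of $v$; this isolates the quantity to be minimised, since edges contribute $0$. A direct computation on the blow-up then gives the target value $W(G_{n,r,s})=\binom{n}{2}+(r-1)^2(n-2r+2)+c_r$, where $c_r$ depends only on $r$ and collects the $O(r^3)$ path--path interactions. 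The key point of this computation is that each of the $n-2r+2$ clique vertices contributes exactly $(r-1)^2$ to the non-edge sum (its non-neighbours are the $2r-3$ far path vertices, and $\sum(d-1)$ over them telescopes to $(r-1)^2$), a value insensitive to how the clique mass is split between the two attachment points; this is precisely why $W(G_{n,r,s})$ is independent of $s$ and explains the equality family claimed in the statement.

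Next I would exploit extremality. Adding any non-edge strictly decreases $W$, so $G$ must be \emph{saturated}: inserting any missing edge drops the radius below $r$. To locate the global shape I would start from the crude bound
\[
W(G)-\binom{n}{2}=\tfrac12\sum_{v}\sum_{i\ge 1}\big(n-b_i(v)\big)\ \ge\ \tfrac12\,n\binom{r}{2},
\]
where $b_i(v)$ is the size of the ball of radius $i$ about $v$, using that $\ecc(v)\ge r$ for every $v$ forces $n-b_i(v)\ge r-i$ (take a shortest path from $v$ to a vertex at distance $\ge r$). Combined with $W(G)\le W(G_{n,r,s})=\binom{n}{2}+O(r^2 n)$ this shows $G$ has only $O(r^2 n)$ non-edges. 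The substantive stability step is then to upgrade this to the right picture: a \emph{clique core} on $n-O(r)$ vertices together with a bounded peripheral part. Here one must weight by the distance-excess rather than merely count non-edges (a nearly complete graph missing only distance-$2$ pairs is cheap but has radius $2$), and apply neighbourhood-domination (Kelmans-type) compressions --- whenever two vertices have nested non-neighbourhoods, replace $G$ by the compressed graph, checking that $W$ does not increase and the radius does not fall below $r$.

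Once $G$ is reduced to a large clique $K_N$ (with $N=n-O(r)$) plus a peripheral gadget on $O(r)$ vertices and the attachment edges between them, the problem becomes a finite optimisation depending on $n$ only through $N$. The decisive phenomenon is that the radius, a min--max quantity, forces the gadget to be a \emph{balanced} path attached to the clique at both ends: a one-sided pendant attachment lets the centre migrate onto the attachment, so that the resulting graph has radius strictly less than $r$ (already a clique with a pendant path has central eccentricity $<r$). Realising radius \emph{exactly} $r$ therefore requires the attachment to meet the clique along two arcs, which is exactly the blow-up of $C_{2r}$. This is the source of the ``factor two'' separating the true minimum $(r-1)^2 n$ from the naive single-path estimate $\tfrac12\binom{r}{2}n$: every clique vertex must reach the far gadget vertices around both arcs of the cycle. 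I would finish by a direct comparison showing that among all admissible gadgets the $C_{2r}$-blow-up minimises the peripheral cost, that redistributing clique mass between the two attachment points leaves $W$ unchanged, and that no other configuration attains the minimum, yielding exactly the family $G_{n,r,s}$ with $1\le s\le\frac{n-2r+2}{2}$ and nothing else.

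The main obstacle is the stability/compression step, for two intertwined reasons. First, the radius is not monotone under the edge insertions and vertex-domination moves one wants to perform, so each step must be accompanied by an argument that the eccentricity profile is preserved enough to keep the radius at $r$. Second, obtaining the \emph{exact} extremal graph, not just an approximate structure, requires pushing the peripheral analysis far enough to exclude all the small ad hoc alternatives that become feasible precisely because $r$ is fixed while $n$ grows. This is exactly where the hypothesis $n\ge n_1(r)$ is consumed: it makes $\binom{N}{2}$ dominate, legitimises the cleaning that concentrates the non-edges on $O(r)$ vertices, and reduces the remaining problem to a bounded, $n$-independent optimisation.
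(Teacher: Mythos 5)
Your preparatory steps are sound and in fact parallel the paper's own opening moves: the non-edge reformulation of $W$, the value of $W(G_{n,r,s})$ (Lemma~\ref{lem:compu}), the single-geodesic lower bound, and the conclusion that a minimiser has only $O(r^2n)$ non-edges. The genuine gap is the step you yourself flag as the main obstacle and then leave unresolved: upgrading ``few non-edges'' to ``clique core on $n-O(r)$ vertices plus a bounded gadget'' via neighbourhood-domination compressions, followed by a finite optimisation. Concretely: (i) an $O(r^2n)$ bound on non-edges does not come close to forcing a clique on $n-O(r)$ vertices --- from that bound alone one can only guarantee a clique of size $\Omega(n/r^2)$ (this is exactly what Lemma~\ref{lem1} extracts, and it is tight: the complement could be a disjoint union of cliques of size $\Theta(r^2)$); the clique-core structure is the conclusion of the theorem, not something recoverable from the crude count. (ii) Compressions are not radius-safe in either direction: moving a neighbourhood can create a shortcut that pushes the radius below $r$ (leaving the feasible class) or destroy a geodesic and push it above $r$, and you offer no mechanism to control this. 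Worse, since the minimum is attained by the whole family $G_{n,r,s}$, $1\le s\le\frac{n-2r+2}{2}$, no compression argument can strictly decrease $W$ on the extremal set; every step would itself require an exact equality analysis, which is the original difficulty in disguise. (iii) The concluding ``direct comparison among all admissible gadgets'' is a finite check only for each fixed $r$, while the theorem quantifies over all $r\ge 3$, so this step needs a uniform argument that is nowhere sketched.

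It is instructive to contrast this with how the paper avoids ever proving a global stability theorem. It uses the $\Omega(n/r^2)$ clique (Lemma~\ref{lem1}) only to locate a single vertex $v$ whose deletion changes neither the radius nor any distance (Lemma~\ref{lem2}), and the heart of the proof is the sharp increment inequality $W(G)\ge W(G\setminus v)+(n-1)+(r-1)^2$ together with its equality characterisation (Lemma~\ref{lem3}). Iterating the deletion, if equality failed at every step the total would exceed $\binom{n}{2}+(r-1)^2n$, contradicting minimality; hence equality holds at some step, its characterisation pins down the local structure (a path $w_r\ldots w_1u_1\ldots u_r$ through $v$ with every other vertex adjacent to $v$), and equality then propagates through all later steps to force $G\cong G_{n,r,s}$. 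If you want to rescue your outline, the missing ingredient is precisely such a local, deletion-based substitute for the compression step; as written, your proposal identifies the right target structure but does not contain a proof of it.
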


There are some main ideas in the proof which are somewhat intuitive.
Since the minimum average distance is close to $1$, we expect there are many vertices of high degree and there is a large clique.
Because the conjectured extremal graphs contain large blow-ups, we can expect there are vertices such that $G \backslash v$ satisfies the original statement as well. By proving that the total distance $W(G)$ and $W(G \backslash v)$ differ by a certain amount with equality if and only if a structure close to the conjectured structure appears, at the end we only need to prove that an extremal graph is exactly of the form $G_{n,r,s}.$

For small values of $n$ with respect to a fixed $r$, there might be a few exceptions to Conjecture~\ref{conjchen}. 
The graph $Q_3$ is a counterexample for the equality statement when $r=3$ and $n=8$, as it also has a total distance equal to $48.$ 
A computer check~\footnote{See \url{https://github.com/StijnCambie/ChenWuAn}, document SmallN\_CWA.} has shown that this is the only counterexample for $n<10.$

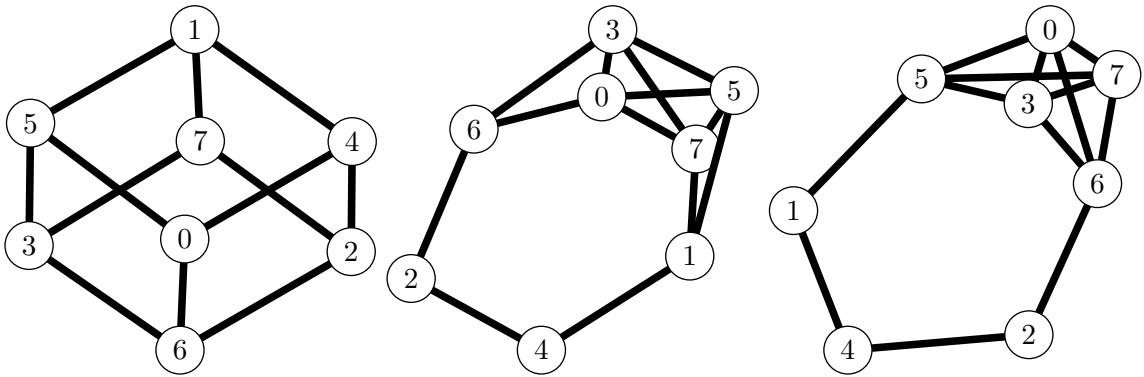
\begin{figure}[h]

	\begin{tikzpicture}[scale=0.85]
	\definecolor{cv0}{rgb}{0.0,0.0,0.0}
	\definecolor{cfv0}{rgb}{1.0,1.0,1.0}
	\definecolor{clv0}{rgb}{0.0,0.0,0.0}
	\definecolor{cv1}{rgb}{0.0,0.0,0.0}
	\definecolor{cfv1}{rgb}{1.0,1.0,1.0}
	\definecolor{clv1}{rgb}{0.0,0.0,0.0}
	\definecolor{cv2}{rgb}{0.0,0.0,0.0}
	\definecolor{cfv2}{rgb}{1.0,1.0,1.0}
	\definecolor{clv2}{rgb}{0.0,0.0,0.0}
	\definecolor{cv3}{rgb}{0.0,0.0,0.0}
	\definecolor{cfv3}{rgb}{1.0,1.0,1.0}
	\definecolor{clv3}{rgb}{0.0,0.0,0.0}
	\definecolor{cv4}{rgb}{0.0,0.0,0.0}
	\definecolor{cfv4}{rgb}{1.0,1.0,1.0}
	\definecolor{clv4}{rgb}{0.0,0.0,0.0}
	\definecolor{cv5}{rgb}{0.0,0.0,0.0}
	\definecolor{cfv5}{rgb}{1.0,1.0,1.0}
	\definecolor{clv5}{rgb}{0.0,0.0,0.0}
	\definecolor{cv6}{rgb}{0.0,0.0,0.0}
	\definecolor{cfv6}{rgb}{1.0,1.0,1.0}
	\definecolor{clv6}{rgb}{0.0,0.0,0.0}
	\definecolor{cv7}{rgb}{0.0,0.0,0.0}
	\definecolor{cfv7}{rgb}{1.0,1.0,1.0}
	\definecolor{clv7}{rgb}{0.0,0.0,0.0}
	\definecolor{cv0v4}{rgb}{0.0,0.0,0.0}
	\definecolor{cv0v5}{rgb}{0.0,0.0,0.0}
	\definecolor{cv0v6}{rgb}{0.0,0.0,0.0}
	\definecolor{cv1v4}{rgb}{0.0,0.0,0.0}
	\definecolor{cv1v5}{rgb}{0.0,0.0,0.0}
	\definecolor{cv1v7}{rgb}{0.0,0.0,0.0}
	\definecolor{cv2v4}{rgb}{0.0,0.0,0.0}
	\definecolor{cv2v6}{rgb}{0.0,0.0,0.0}
	\definecolor{cv2v7}{rgb}{0.0,0.0,0.0}
	\definecolor{cv3v5}{rgb}{0.0,0.0,0.0}
	\definecolor{cv3v6}{rgb}{0.0,0.0,0.0}
	\definecolor{cv3v7}{rgb}{0.0,0.0,0.0}
	\Vertex[style={minimum size=1.0cm,draw=cv0,fill=cfv0,text=clv0,shape=circle},LabelOut=false,L=\hbox{$0$},x=2.4086cm,y=1.7328cm]{v0}
	\Vertex[style={minimum size=1.0cm,draw=cv1,fill=cfv1,text=clv1,shape=circle},LabelOut=false,L=\hbox{$1$},x=2.5642cm,y=5.0cm]{v1}
	\Vertex[style={minimum size=1.0cm,draw=cv2,fill=cfv2,text=clv2,shape=circle},LabelOut=false,L=\hbox{$2$},x=4.9844cm,y=1.5364cm]{v2}
	\Vertex[style={minimum size=1.0cm,draw=cv3,fill=cfv3,text=clv3,shape=circle},LabelOut=false,L=\hbox{$3$},x=0.0cm,y=1.6318cm]{v3}
	\Vertex[style={minimum size=1.0cm,draw=cv4,fill=cfv4,text=clv4,shape=circle},LabelOut=false,L=\hbox{$4$},x=5.0cm,y=3.2536cm]{v4}
	\Vertex[style={minimum size=1.0cm,draw=cv5,fill=cfv5,text=clv5,shape=circle},LabelOut=false,L=\hbox{$5$},x=0.0237cm,y=3.5408cm]{v5}
	\Vertex[style={minimum size=1.0cm,draw=cv6,fill=cfv6,text=clv6,shape=circle},LabelOut=false,L=\hbox{$6$},x=2.3354cm,y=0.0cm]{v6}
	\Vertex[style={minimum size=1.0cm,draw=cv7,fill=cfv7,text=clv7,shape=circle},LabelOut=false,L=\hbox{$7$},x=2.6499cm,y=3.2628cm]{v7}
	\Edge[lw=0.1cm,style={color=cv0v4,},](v0)(v4)
	\Edge[lw=0.1cm,style={color=cv0v5,},](v0)(v5)
	\Edge[lw=0.1cm,style={color=cv0v6,},](v0)(v6)
	\Edge[lw=0.1cm,style={color=cv1v4,},](v1)(v4)
	\Edge[lw=0.1cm,style={color=cv1v5,},](v1)(v5)
	\Edge[lw=0.1cm,style={color=cv1v7,},](v1)(v7)
	\Edge[lw=0.1cm,style={color=cv2v4,},](v2)(v4)
	\Edge[lw=0.1cm,style={color=cv2v6,},](v2)(v6)
	\Edge[lw=0.1cm,style={color=cv2v7,},](v2)(v7)
	\Edge[lw=0.1cm,style={color=cv3v5,},](v3)(v5)
	\Edge[lw=0.1cm,style={color=cv3v6,},](v3)(v6)
	\Edge[lw=0.1cm,style={color=cv3v7,},](v3)(v7)
	\end{tikzpicture}
	\begin{tikzpicture}[scale=0.85]
	\definecolor{cv0}{rgb}{0.0,0.0,0.0}
	\definecolor{cfv0}{rgb}{1.0,1.0,1.0}
	\definecolor{clv0}{rgb}{0.0,0.0,0.0}
	\definecolor{cv1}{rgb}{0.0,0.0,0.0}
	\definecolor{cfv1}{rgb}{1.0,1.0,1.0}
	\definecolor{clv1}{rgb}{0.0,0.0,0.0}
	\definecolor{cv2}{rgb}{0.0,0.0,0.0}
	\definecolor{cfv2}{rgb}{1.0,1.0,1.0}
	\definecolor{clv2}{rgb}{0.0,0.0,0.0}
	\definecolor{cv3}{rgb}{0.0,0.0,0.0}
	\definecolor{cfv3}{rgb}{1.0,1.0,1.0}
	\definecolor{clv3}{rgb}{0.0,0.0,0.0}
	\definecolor{cv4}{rgb}{0.0,0.0,0.0}
	\definecolor{cfv4}{rgb}{1.0,1.0,1.0}
	\definecolor{clv4}{rgb}{0.0,0.0,0.0}
	\definecolor{cv5}{rgb}{0.0,0.0,0.0}
	\definecolor{cfv5}{rgb}{1.0,1.0,1.0}
	\definecolor{clv5}{rgb}{0.0,0.0,0.0}
	\definecolor{cv6}{rgb}{0.0,0.0,0.0}
	\definecolor{cfv6}{rgb}{1.0,1.0,1.0}
	\definecolor{clv6}{rgb}{0.0,0.0,0.0}
	\definecolor{cv7}{rgb}{0.0,0.0,0.0}
	\definecolor{cfv7}{rgb}{1.0,1.0,1.0}
	\definecolor{clv7}{rgb}{0.0,0.0,0.0}
	\definecolor{cv0v3}{rgb}{0.0,0.0,0.0}
	\definecolor{cv0v5}{rgb}{0.0,0.0,0.0}
	\definecolor{cv0v6}{rgb}{0.0,0.0,0.0}
	\definecolor{cv0v7}{rgb}{0.0,0.0,0.0}
	\definecolor{cv1v4}{rgb}{0.0,0.0,0.0}
	\definecolor{cv1v5}{rgb}{0.0,0.0,0.0}
	\definecolor{cv1v7}{rgb}{0.0,0.0,0.0}
	\definecolor{cv2v4}{rgb}{0.0,0.0,0.0}
	\definecolor{cv2v6}{rgb}{0.0,0.0,0.0}
	\definecolor{cv3v5}{rgb}{0.0,0.0,0.0}
	\definecolor{cv3v6}{rgb}{0.0,0.0,0.0}
	\definecolor{cv3v7}{rgb}{0.0,0.0,0.0}
	\definecolor{cv5v7}{rgb}{0.0,0.0,0.0}
	\Vertex[style={minimum size=1.0cm,draw=cv0,fill=cfv0,text=clv0,shape=circle},LabelOut=false,L=\hbox{$0$},x=2.9511cm,y=3.9498cm]{v0}
	\Vertex[style={minimum size=1.0cm,draw=cv1,fill=cfv1,text=clv1,shape=circle},LabelOut=false,L=\hbox{$1$},x=4.3115cm,y=1.4668cm]{v1}
	\Vertex[style={minimum size=1.0cm,draw=cv2,fill=cfv2,text=clv2,shape=circle},LabelOut=false,L=\hbox{$2$},x=0.0cm,y=1.1174cm]{v2}
	\Vertex[style={minimum size=1.0cm,draw=cv3,fill=cfv3,text=clv3,shape=circle},LabelOut=false,L=\hbox{$3$},x=3.1181cm,y=5.0cm]{v3}
	\Vertex[style={minimum size=1.0cm,draw=cv4,fill=cfv4,text=clv4,shape=circle},LabelOut=false,L=\hbox{$4$},x=2.0158cm,y=0.0cm]{v4}
	\Vertex[style={minimum size=1.0cm,draw=cv5,fill=cfv5,text=clv5,shape=circle},LabelOut=false,L=\hbox{$5$},x=5.0cm,y=4.0497cm]{v5}
	\Vertex[style={minimum size=1.0cm,draw=cv6,fill=cfv6,text=clv6,shape=circle},LabelOut=false,L=\hbox{$6$},x=0.9727cm,y=3.4471cm]{v6}
	\Vertex[style={minimum size=1.0cm,draw=cv7,fill=cfv7,text=clv7,shape=circle},LabelOut=false,L=\hbox{$7$},x=4.4079cm,y=3.1373cm]{v7}
	\Edge[lw=0.1cm,style={color=cv0v3,},](v0)(v3)
	\Edge[lw=0.1cm,style={color=cv0v5,},](v0)(v5)
	\Edge[lw=0.1cm,style={color=cv0v6,},](v0)(v6)
	\Edge[lw=0.1cm,style={color=cv0v7,},](v0)(v7)
	\Edge[lw=0.1cm,style={color=cv1v4,},](v1)(v4)
	\Edge[lw=0.1cm,style={color=cv1v5,},](v1)(v5)
	\Edge[lw=0.1cm,style={color=cv1v7,},](v1)(v7)
	\Edge[lw=0.1cm,style={color=cv2v4,},](v2)(v4)
	\Edge[lw=0.1cm,style={color=cv2v6,},](v2)(v6)
	\Edge[lw=0.1cm,style={color=cv3v5,},](v3)(v5)
	\Edge[lw=0.1cm,style={color=cv3v6,},](v3)(v6)
	\Edge[lw=0.1cm,style={color=cv3v7,},](v3)(v7)
	\Edge[lw=0.1cm,style={color=cv5v7,},](v5)(v7)
	\end{tikzpicture}
	\begin{tikzpicture}[scale=0.85]
	\definecolor{cv0}{rgb}{0.0,0.0,0.0}
	\definecolor{cfv0}{rgb}{1.0,1.0,1.0}
	\definecolor{clv0}{rgb}{0.0,0.0,0.0}
	\definecolor{cv1}{rgb}{0.0,0.0,0.0}
	\definecolor{cfv1}{rgb}{1.0,1.0,1.0}
	\definecolor{clv1}{rgb}{0.0,0.0,0.0}
	\definecolor{cv2}{rgb}{0.0,0.0,0.0}
	\definecolor{cfv2}{rgb}{1.0,1.0,1.0}
	\definecolor{clv2}{rgb}{0.0,0.0,0.0}
	\definecolor{cv3}{rgb}{0.0,0.0,0.0}
	\definecolor{cfv3}{rgb}{1.0,1.0,1.0}
	\definecolor{clv3}{rgb}{0.0,0.0,0.0}
	\definecolor{cv4}{rgb}{0.0,0.0,0.0}
	\definecolor{cfv4}{rgb}{1.0,1.0,1.0}
	\definecolor{clv4}{rgb}{0.0,0.0,0.0}
	\definecolor{cv5}{rgb}{0.0,0.0,0.0}
	\definecolor{cfv5}{rgb}{1.0,1.0,1.0}
	\definecolor{clv5}{rgb}{0.0,0.0,0.0}
	\definecolor{cv6}{rgb}{0.0,0.0,0.0}
	\definecolor{cfv6}{rgb}{1.0,1.0,1.0}
	\definecolor{clv6}{rgb}{0.0,0.0,0.0}
	\definecolor{cv7}{rgb}{0.0,0.0,0.0}
	\definecolor{cfv7}{rgb}{1.0,1.0,1.0}
	\definecolor{clv7}{rgb}{0.0,0.0,0.0}
	\definecolor{cv0v3}{rgb}{0.0,0.0,0.0}
	\definecolor{cv0v5}{rgb}{0.0,0.0,0.0}
	\definecolor{cv0v6}{rgb}{0.0,0.0,0.0}
	\definecolor{cv0v7}{rgb}{0.0,0.0,0.0}
	\definecolor{cv1v4}{rgb}{0.0,0.0,0.0}
	\definecolor{cv1v5}{rgb}{0.0,0.0,0.0}
	\definecolor{cv2v4}{rgb}{0.0,0.0,0.0}
	\definecolor{cv2v6}{rgb}{0.0,0.0,0.0}
	\definecolor{cv3v5}{rgb}{0.0,0.0,0.0}
	\definecolor{cv3v6}{rgb}{0.0,0.0,0.0}
	\definecolor{cv3v7}{rgb}{0.0,0.0,0.0}
	\definecolor{cv5v7}{rgb}{0.0,0.0,0.0}
	\definecolor{cv6v7}{rgb}{0.0,0.0,0.0}
	\Vertex[style={minimum size=1.0cm,draw=cv0,fill=cfv0,text=clv0,shape=circle},LabelOut=false,L=\hbox{$0$},x=3.9694cm,y=5.0cm]{v0}
	\Vertex[style={minimum size=1.0cm,draw=cv1,fill=cfv1,text=clv1,shape=circle},LabelOut=false,L=\hbox{$1$},x=0.0cm,y=2.1763cm]{v1}
	\Vertex[style={minimum size=1.0cm,draw=cv2,fill=cfv2,text=clv2,shape=circle},LabelOut=false,L=\hbox{$2$},x=3.6415cm,y=0.2403cm]{v2}
	\Vertex[style={minimum size=1.0cm,draw=cv3,fill=cfv3,text=clv3,shape=circle},LabelOut=false,L=\hbox{$3$},x=3.6324cm,y=3.8451cm]{v3}
	\Vertex[style={minimum size=1.0cm,draw=cv4,fill=cfv4,text=clv4,shape=circle},LabelOut=false,L=\hbox{$4$},x=0.8417cm,y=0.0cm]{v4}
	\Vertex[style={minimum size=1.0cm,draw=cv5,fill=cfv5,text=clv5,shape=circle},LabelOut=false,L=\hbox{$5$},x=1.9816cm,y=4.231cm]{v5}
	\Vertex[style={minimum size=1.0cm,draw=cv6,fill=cfv6,text=clv6,shape=circle},LabelOut=false,L=\hbox{$6$},x=4.7032cm,y=2.5969cm]{v6}
	\Vertex[style={minimum size=1.0cm,draw=cv7,fill=cfv7,text=clv7,shape=circle},LabelOut=false,L=\hbox{$7$},x=5.0cm,y=4.2992cm]{v7}
	\Edge[lw=0.1cm,style={color=cv0v3,},](v0)(v3)
	\Edge[lw=0.1cm,style={color=cv0v5,},](v0)(v5)
	\Edge[lw=0.1cm,style={color=cv0v6,},](v0)(v6)
	\Edge[lw=0.1cm,style={color=cv0v7,},](v0)(v7)
	\Edge[lw=0.1cm,style={color=cv1v4,},](v1)(v4)
	\Edge[lw=0.1cm,style={color=cv1v5,},](v1)(v5)
	\Edge[lw=0.1cm,style={color=cv2v4,},](v2)(v4)
	\Edge[lw=0.1cm,style={color=cv2v6,},](v2)(v6)
	\Edge[lw=0.1cm,style={color=cv3v5,},](v3)(v5)
	\Edge[lw=0.1cm,style={color=cv3v6,},](v3)(v6)
	\Edge[lw=0.1cm,style={color=cv3v7,},](v3)(v7)
	\Edge[lw=0.1cm,style={color=cv5v7,},](v5)(v7)
	\Edge[lw=0.1cm,style={color=cv6v7,},](v6)(v7)

	\end{tikzpicture}

	\caption{The three extremal graphs for $r=3$ and $n=8$: $Q_3, G_{8,3,2}$ and $ G_{8,3,1}$ }
	\label{fig:extremaln8r3}
\end{figure}


In the digraph setting, the total distance $W(D)$ of a digraph $D$ equals the sum of all distances between all ordered pairs of vertices. The outradius of $D$ is equal to the smallest value $r$ such that there exists a vertex $x$ for which $d(x,v)\le r$ for every vertex $v$ of the digraph $D$.

In this setting, when the outradius $r=1$, the extremal digraphs are obviously bidirected cliques, their total distance being $2\binom{n}{2}.$
When $r=2$, the extremal digraphs are bidirected cliques missing $n$ edges, one starting in every vertex, with the restriction that no $n-1$ of those missing edges end in the same vertex. In this case the total distance $W(D)=n^2.$
For $r\ge 3$, we propose the digraph analog to Conjecture~\ref{conjchen}. The definition of $D_{n,r,s}$ being stated in Section~\ref{not&def}. Figure~\ref{fig:Dnrs} shows $D_{n,r,s}$ for $r=3.$
%
\begin{conj}\label{conjchendigraph}
	Let $n$ and $r$ be two positive integers with $n \ge 2r$ and $r \ge 3$. For any digraph $D$ of order $n$ with outradius $r$, $W(D) \ge W(D_{n,r,1})$. Equality holds if and only if $D \cong D_{n,r,s}$ for $1 \le s \le \frac{n-2r+2}{2}$.
\end{conj}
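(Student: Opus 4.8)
The plan is to transplant the inductive, vertex-deletion strategy used for Theorem~\ref{main} to the directed setting, while accounting for the two features that make digraphs genuinely harder: directed distances are asymmetric, and the outradius constrains only the out-distances from a single centre. Fix $r\ge 3$, let $n$ be large, and let $D$ be a digraph of order $n$ and outradius $r$ that minimises $W(D)$; the aim is to prove $W(D)=W(D_{n,r,1})$ and that every minimiser is isomorphic to some $D_{n,r,s}$. Throughout I would use $W(D_{n,r,1})$ as a benchmark: since all but $O(r)$ of its vertices lie in the two bidirected-clique blow-ups, almost every ordered pair is at distance $1$, so $W(D_{n,r,1})=n^2+O(n)$ for fixed $r$ and the average ordered distance of any minimiser tends to $1$.

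From the benchmark I would first extract coarse structure. Because $W(D)\le n^2+O(n)$ while each of the $n(n-1)$ ordered pairs contributes at least $1$, only $O(n)$ ordered pairs have distance at least $2$, so $D$ has $n^2-O(n)$ arcs. A first counting over out- and in-degrees then yields a bidirected clique on $(1-o(1))n$ vertices, and the finer structural analysis behind Theorem~\ref{main}, adapted so that ``degree'' is replaced by out- and in-degree, should improve this to a set $Q$ of $n-O(r)$ vertices inducing a bidirected clique, each vertex of which reaches and is reached from all but $O(r)$ vertices in one step. This is the directed counterpart of the ``large clique, many high-degree vertices'' heuristic quoted before Theorem~\ref{main}.

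For the inductive step I would pick a vertex $v\in Q$ inside the larger blow-up and delete it. One must check that $D\setminus v$ is again a valid instance, i.e. has outradius exactly $r$: removing a vertex never decreases a directed distance, so the outradius does not drop, and for $n$ large the centre together with the $O(r)$ backbone realising out-eccentricity $r$ is untouched, so it does not rise either. Writing $W(D)-W(D\setminus v)=\sum_{u\ne v}\bigl(d(v,u)+d(u,v)\bigr)-\Delta$, where $\Delta\ge 0$ measures the increase of the remaining ordered distances caused by shortest paths that used $v$, I would choose $v$ so that $\Delta=0$ and $\sum_{u\ne v}\bigl(d(v,u)+d(u,v)\bigr)\ge W(D_{n,r,1})-W(D_{n-1,r,1})$, with equality forcing $v$ to be joined in both directions to all of $Q$ and to sit at the prescribed distances from the backbone. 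Combined with the induction hypothesis $W(D\setminus v)\ge W(D_{n-1,r,1})$ this gives $W(D)\ge W(D_{n,r,1})$, and the equality discussion, applied to the $O(r)$ vertices outside $Q$, forces the backbone to be a bidirected $C_{2r}$ carrying two consecutive blow-ups, i.e. $D\cong D_{n,r,s}$. The finitely many small cases $n<n_1(r)$ form the base of the induction.

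The hard part, and the place where the directed problem cannot be reduced to the graph result, is precisely the interaction of asymmetry with the one-sided outradius. One might hope to pass to the underlying graph $G$ via $W(D)\ge 2W(G)$, but outradius $r$ only gives $\rad(G)\le r$, and a graph of smaller radius can have strictly smaller total distance, so Theorem~\ref{main} yields no usable lower bound; the directedness must be exploited essentially. I therefore expect the crux to be two lemmas. The first must show that a minimiser cannot profitably ``spend'' asymmetry outside the region dictated by $D_{n,r,s}$ — any single-direction arc not belonging to the backbone pattern can be completed to a bidirected pair, or reoriented, without increasing $W(D)$ and without changing the outradius; this is delicate, since the $r=2$ analysis already shows that deliberately omitting one out-arc per vertex is harmless, so the admissible asymmetry must be pinned down exactly rather than eliminated. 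The second lemma must guarantee the existence of a \emph{distance-preserving} deletable vertex, namely a $v\in Q$ whose removal leaves every other ordered distance unchanged (so that $\Delta=0$), which should follow from the abundance of parallel clique vertices available to reroute any shortest path through $v$. Once both are in place, the residual optimisation over the bounded backbone is a finite computation that proceeds exactly as in the undirected argument.
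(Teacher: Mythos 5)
Your skeleton --- benchmark computation, large bidirected clique, repeated deletion of a vertex that preserves all other distances and the outradius, a per-step lower bound with an equality characterization --- is the same as the paper's (its Lemmas~\ref{lem:g1}, \ref{lem:g2}, \ref{lem2di}, Proposition~\ref{proplem} and Lemma~\ref{D2rr1core}). But the way you close the argument has a genuine gap. You finish by invoking ``the induction hypothesis $W(D\setminus v)\ge W(D_{n-1,r,1})$'' and declare that the cases $n<n_1(r)$ form the base of the induction. That base is unavailable: the statement is precisely the one being proved, it is only claimed for large $n$, and the paper stresses that for fixed $r$ there may be counterexamples at small $n$ (as $Q_3$ is for the undirected case at $n=8$, $r=3$); checking all digraphs up to the threshold $n_1(r)$, which is of order $r^6$, is not a usable proof step. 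The paper is engineered to avoid exactly this: it deletes vertices all the way down to a digraph $H$ on $n_0$ vertices and uses only the trivial bound $W(H)\ge 2\binom{n_0}{2}$, never the statement at smaller orders. The point is integrality plus bookkeeping: each deletion step contributes at least $2(i-1)+(r-1)^2$, which matches the increment of the benchmark $2\binom{i}{2}+(r-1)^2 i$ exactly, and any step that is \emph{not} an equality case of Proposition~\ref{proplem} contributes at least $1$ more. Since there are $n-n_0\ge (r-1)^2 n_0$ steps while the benchmark's head start over the trivial bound at order $n_0$ is less than $(r-1)^2 n_0$, strictness in every step would give $W(D)>2\binom{n}{2}+(r-1)^2n>W(D_{n,r,1})$, a contradiction; hence some intermediate step is an equality case, Lemma~\ref{D2rr1core} then forces that intermediate digraph to be some $D_{m,r,s}$, and equality propagates upward to $D\cong D_{n,r,s}$. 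Without this mechanism, or a base case you cannot supply, your induction does not close.

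Three smaller points. First, your structural claim is overstated: from $W(D)<2\binom{n}{2}+an$ alone, degree counting yields a bidirected clique of size $\Theta(n/a)$ with $a=(r-1)^2$ (the paper gets $n/(8a)$), not $(1-o(1))n$ and certainly not $n-O(r)$; a digraph obeying the distance bound can spread its missing arcs over linearly many vertices. Fortunately only $\Theta_r(n)$ is needed. Second, your justification that the outradius ``does not drop'' because deletion never decreases distances is backwards: out-eccentricity is a maximum over \emph{targets}, so deleting a vertex $v$ with $d(x,v)=r$ can lower $\ecc^+(x)$ to $r-1$ and thus lower the outradius; this is exactly what the set $T$ in Lemma~\ref{lem:g2} is built to exclude, and it costs a counting argument, not a one-liner. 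Third, your proposed lemma about completing stray one-directional arcs ``without increasing $W(D)$'' is vacuous-to-backwards for a minimiser: adding any missing arc strictly decreases $W$, so minimality forces every missing arc to be critical for keeping the outradius at $r$; the paper's actual substitute for this is the asymmetric distance-sum analysis of Proposition~\ref{proplem} and Lemma~\ref{D2rr1core}, which is where the directed difficulty is really resolved.
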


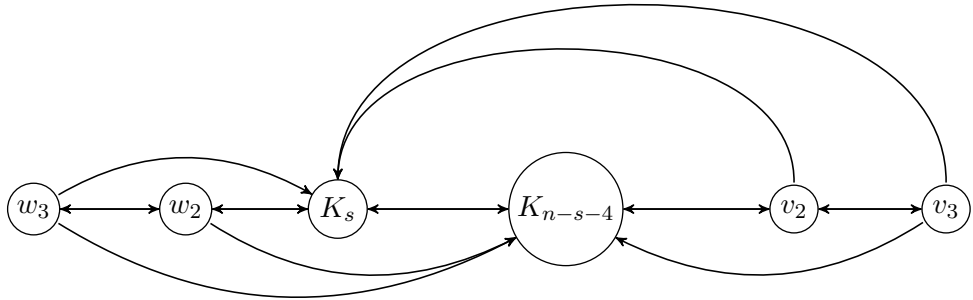
\begin{figure}[h]
	\centering

	\begin{tikzpicture}
	\definecolor{cv0}{rgb}{0.0,0.0,0.0}
	\definecolor{c}{rgb}{1.0,1.0,1.0}

	\Vertex[L=\hbox{$v_2$},x=13,y=0]{v1}
	\Vertex[L=\hbox{$v_3$},x=15,y=0]{v2}

	\Vertex[L=\hbox{$w_2$},x=5,y=0]{w1}
	\Vertex[L=\hbox{$w_3$},x=3,y=0]{w2}
	\Vertex[L=\hbox{$K_s$},x=7,y=0]{w0}
	
	\Vertex[L=\hbox{$K_{n-s-4}$},x=10cm,y=0.0cm]{v0}
	
	\Edge[lw=0.1cm,style={post, right}](v0)(v1)
	\Edge[lw=0.1cm,style={post, right}](v1)(v2)
	\Edge[lw=0.1cm,style={post, right}](w0)(v0)
	\Edge[lw=0.1cm,style={post, right}](v1)(v0)
	\Edge[lw=0.1cm,style={post, right}](v2)(v1)
	\Edge[lw=0.1cm,style={post, right}](v0)(w0)
	\Edge[lw=0.1cm,style={post, right}](w1)(w2)
	\Edge[lw=0.1cm,style={post, right}](w0)(w1)
	\Edge[lw=0.1cm,style={post, right}](w1)(w0)
	\Edge[lw=0.1cm,style={post, right}](w2)(w1)

	\Edge[lw=0.1cm,style={post, bend left}](w2)(w0)
	\Edge[lw=0.1cm,style={post, bend right}](w2)(v0)
	\Edge[lw=0.1cm,style={post, bend right}](w1)(v0)
	
		\Edge[lw=0.1cm,style={post, bend right=90}](v2)(w0)
	\Edge[lw=0.1cm,style={post, bend left}](v2)(v0)
	\Edge[lw=0.1cm,style={post, bend right=90}](v1)(w0)

	\end{tikzpicture}

	\caption{The digraph $D_{n,r,s}$ for $r=3$}
	\label{fig:Dnrs}
\end{figure}

Just as in the graph case, like with Conjecture~\ref{conjchen}, for fixed $r$ there may be a few counterexamples for small $n$.
Also Conjecture~\ref{conjchendigraph} is asymptotically true.

\begin{thr}\label{maindi}
	For $r\ge 3$, there exists a value $n_1(r)$ such that for all $n \ge n_1(r)$ the following hold
	\begin{itemize}
		\item for any digraph $D$ of order $n$ with outradius $r$, we have $W(D) \ge W(D_{n,r,1})$. Equality holds if and only if $D \cong D_{n,r,s}$ for $1 \le s \le \frac{n-2r+2}{2}$.
	\end{itemize}
\end{thr}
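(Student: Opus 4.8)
The plan is to prove Theorem~\ref{maindi} by the same strategy that underlies Theorem~\ref{main}: induction on $n$ through the deletion of a carefully chosen high-degree vertex, reducing the global inequality to a local estimate on how much the total distance can drop when one vertex is removed. Writing $\Delta(n) := W(D_{n,r,1}) - W(D_{n-1,r,1})$ for the increment realized by the conjectured extremal digraphs, the goal of the inductive step is to show that for a suitable vertex $v$ of an order-$n$ digraph $D$ of outradius $r$, the digraph $D\backslash v$ again has outradius $r$ and
\[
\sum_{u \ne v} \bigl( d(u,v) + d(v,u) \bigr) \;\ge\; \Delta(n),
\]
with equality precisely when the local structure around $v$ matches that of $D_{n,r,s}$. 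Combined with the inductive hypothesis $W(D\backslash v) \ge W(D_{n-1,r,1})$ this yields $W(D) \ge W(D_{n-1,r,1}) + \Delta(n) = W(D_{n,r,1})$, and the equality characterization is then propagated down the induction.

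First I would reduce to near-extremal digraphs: if $W(D) > W(D_{n,r,1})$ there is nothing to prove, so I may assume $W(D) \le W(D_{n,r,1}) = n(n-1) + O_r(n)$. Since every ordered pair contributes at least $1$, this bounds the excess $\sum_{\text{ordered }(u,v)}(d(u,v)-1)$ by $O_r(n)$, forcing the average distance towards $1$. I would turn this into a count: all but $O_r(1)$ vertices must have both in-degree and out-degree equal to $n - O_r(1)$, and these high-degree vertices form a large bidirected clique $C$ of size $n - O_r(1)$, playing the role of the blown-up core $K_s \cup K_{n-2r+2-s}$ of $D_{n,r,s}$. The remaining $O_r(1)$ vertices are the candidates for the directed appendages; bounding their number by a function of $r$ alone is what guarantees that $C$ contains a vertex whose removal is harmless.

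With $C$ in hand I would select $v \in C$ so that $D\backslash v$ still has a vertex of out-eccentricity $r$ and none smaller, using that $|C| = n - O_r(1)$ lets us avoid disturbing any shortest directed path realizing the outradius. For this $v$ the distances $d(u,v)$ and $d(v,u)$ equal $1$ for all but $O_r(1)$ vertices, so the displayed sum is controlled by the few long directed distances forced by the outradius constraint; comparing against the value of $\Delta(n)$ computed from the definition of $D_{n,r,s}$ yields the estimate. Tracking equality, each tight inequality pins down one feature of the appendages — their lengths, their attachment pattern to the two cliques, and the orientations along the backbone — until only the digraphs $D_{n,r,s}$ with $1 \le s \le \frac{n-2r+2}{2}$ survive. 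The base cases for $n$ just above $n_1(r)$ are absorbed by taking $n_1(r)$ large enough that the structural count already isolates the extremal family directly.

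The main obstacle I anticipate is twofold, and both difficulties stem from the directedness. First, controlling the outradius under vertex deletion is more delicate than controlling the radius in the undirected setting: removing $v$ could in principle create a new center of smaller out-eccentricity or raise the out-eccentricity of the intended center, so one must argue quantitatively that with $n - O_r(1)$ core vertices the whole out-eccentricity profile is preserved. Second, the asymmetry of $d(u,v)$ versus $d(v,u)$ means the local difference estimate cannot be collapsed to a single undirected quantity; the incoming and outgoing contributions must be bounded separately, and the equality analysis genuinely uses both the forward and backward orientation of the backbone of $D_{n,r,s}$. The most delicate point is then separating the intended family $\{D_{n,r,s}\}$ from the sporadic small-$n$ near-misses which, exactly as in the graph case, disappear once $n \ge n_1(r)$.
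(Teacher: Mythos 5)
Your overall skeleton is the one the paper follows (reduce to $W(D)\le 2\binom{n}{2}+(r-1)^2n$, extract a bidirected clique, delete a clique vertex that preserves distances and the outradius, and compare the per-vertex increment with $\Delta(n)=2(n-1)+(r-1)^2$, which is exactly Proposition~\ref{proplem}), but your structural step is a genuine gap: from the excess bound $\sum_{(u,v)}\left(d(u,v)-1\right)=O_r(n)$ it does \emph{not} follow that all but $O_r(1)$ vertices have in- and out-degree $n-O_r(1)$, nor that the high-degree vertices form a bidirected clique of size $n-O_r(1)$. The budget $(r-1)^2n$ is compatible with \emph{every} vertex having total-degree deficiency about $2(r-1)^2$, so Markov-type counting only yields that a constant fraction (in the paper: half) of the vertices have deficiency $O_r(1)$, and a greedy argument inside that set only yields a bidirected clique of size $\Omega\left(n/(r-1)^2\right)$ --- this is Lemma~\ref{lem:g1}. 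A clique of size $n-O_r(1)$ is essentially the conclusion of the theorem itself (the extremal digraphs have $\omega=n-2r+2$), so invoking it at this stage is circular. Since your choice of the deleted vertex $v$ and your claim that deletion does not disturb the outradius both lean on $|C|=n-O_r(1)$, they have no justification; the paper instead works with the much smaller clique and compensates by explicit bookkeeping (Lemmas~\ref{lem:g2} and~\ref{lem2di}): a hub set $S$ of size $O(a^2)$ forcing distance preservation, at most one vertex $z^*$ whose removal raises the outradius, and a set $T$ of size at most $an/k$ whose removal lowers it, bounded because each $z\in T$ has a private witness $w_z$ with $d(w_z,z)=r\ge 3$ and hence no arc into the clique.

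The second gap is the base case of your induction. Your inductive hypothesis $W(D\backslash v)\ge W(D_{n-1,r,1})$ is the theorem at order $n-1$, so you need the theorem at order $n_1(r)$ to start, and ``the structural count already isolates the extremal family directly'' is again circular: nothing short of the theorem identifies the near-extremal digraphs of a fixed large order. The paper is arranged precisely to avoid any base case: it deletes vertices one by one all the way down to an arbitrary digraph $H$ of order $n_0$, uses only the trivial bound $W(H)\ge 2\binom{n_0}{2}$, and notes that if the increment of Proposition~\ref{proplem} were strict at every step, the accumulated sum would exceed $2\binom{n}{2}+an$ as soon as $n\ge n_0+an_0$, a contradiction. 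Hence equality holds at some step $m$, and then Lemma~\ref{D2rr1core} --- the genuinely hard, asymmetry-sensitive part that your sketch compresses into ``tracking equality'' --- shows that the equality configuration forces $D_m\cong D_{m,r,s}$, after which equality must persist at every later step and the structure propagates up to $D\cong D_{n,r,s}$. To repair your proposal, replace the induction on the theorem by this ``delete down to $n_0$, force an equality step, characterize from there'' scheme, and replace the $n-O_r(1)$ clique claim by the weaker but provable Lemma~\ref{lem:g1} together with the $S$, $T$, $z^*$ accounting.
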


The proof for this in Section~\ref{AsProofChendigraph} uses the same main ideas as in the graph case, but turns out to be slightly more difficult as the distance function is not symmetric. In this case, we consider an expression which gives information about $W(D)-W(D \backslash v)$ in a more general setting. In Section~\ref{subsec:Min_givenrad} we start the investigation of the question for radius instead of out‐radius in the digraph case.

\section{Notation and definitions}\label{not&def}

A graph will be denoted by $G=(V,E)$ and 
a digraph will be denoted by $D=(V,A).$
The order $\lvert V \rvert$ will be denoted by $n$. 
A clique or bidirected clique on $n$ vertices will be denoted by $K_n$. 
A cycle or directed cycle of length $k$ will be denoted by $C_k$.
The clique number of a graph $G$, $\omega(G)$, is the order of the largest clique which is a subgraph of $G.$
The complement $G^c$ of graph $G=(V,E)$ is the graph with vertex set $V$ and edge set $E^c=\binom{V}{2} \backslash E.$ 
The complement $D^c$ of a digraph $D$ is defined similarly, where the set of directed edges is the complement with respect to the edges of a bidirected clique.

The degree of a vertex in a graph $\deg(v)$ equals the number of neighbours of the vertex $v$, i.e. $\deg(v)=\lvert N(v) \rvert.$
In a digraph, we denote with $N^-(v)$ and $N^+(v)$ the (open) in- and outneighbourhood of a vertex $v$.
The indegree $\deg^-$ and outdegree $\deg^+$ of a vertex $v$, equals the number of arrows ending in or starting from the vertex $v$, i.e. $\deg^+(v)=\lvert N^+(v) \rvert $ and  $\deg^-(v)=\lvert N^-(v) \rvert $.
The total degree $\deg$ of a vertex $v$ in a digraph is the sum of the in- and outdegree, i.e. $\deg(v)=\deg^+(v)+\deg^-(v).$

Let $d(u, v)$ denote the distance between vertices $u$ and $v$ in a graph $G$ or digraph $D$, i.e. the number of edges or arrows in a shortest path from $u$ to $v$.  
The eccentricity of a vertex $v$ in a graph equals $\ecc(v)=d(v,V)=\max_{u \in V} d(v,u).$
The radius and diameter of a graph on vertex set $V$ are respectively equal to $\min_{v \in V} \ecc(v)$ and $\max_{v \in V} \ecc(v)=\max_{u,v \in V} d(u,v).$

In the case of digraphs, the distance function between vertices is not symmetric and so there is a difference between the inner- and outer eccentricity 
$\ecc^- (v)=d(V,v)=\max_{u \in V} d(u,v)$ and $\ecc^+(v)=d(v,V)=\max_{u \in V} d(v,u).$
We use the conventions as in e.g. \cite{JG}.
Radius, inradius and outradius are defined in Subsection $2.1$ in \cite{JG} or Subsection $3.1$ in \cite{JG2} but for clarity we define them in the next sentences.
The in- and outradius of a digraph $D$ are defined by 
$\rad^{-}(D)= \min\{d(V,x)\mid x \in V\}$ and $\rad^{+}(D)= \min\{d(x,V)\mid x \in V\}$.
The radius of a digraph $D$ is defined as $\rad(D)=\min\{ \frac{d(x,V)+ d(V,x)}2 \mid x \in V \}.$ Sometimes authors refer to the outradius as radius (see e.g. Subsection $1.4$ in~\cite{CLZ}), as outradius is the most common one between those three definitions.

The total distance, also called the Wiener index, of a graph $G$ equals the sum of distances between all unordered pairs of vertices, i.e. $W(G)=\sum_{\{u,v\} \subset V} d(u,v).$ 
The average distance of a graph is $\mu(G)=\frac{W(G)}{\binom{n}{2}}$. 
The Wiener index of a digraph equals the sum of distances between all ordered pairs of vertices, i.e. $W(D)=\sum_{(u,v) \in V^2} d(u,v).$
The average distance of the digraph is $\mu(D)=\frac{W(D)}{n^2-n}.$
A digraph is called biconnected if $d(u,v)$ is finite for any $2$ vertices $u$ and $v$. 

The statement $f(x)=O(g(x))$ as $x \to \infty$ implies that there exist fixed constants $x_0, M>0$, such that for all $x \ge x_0$ we have $\lvert f(x) \rvert \le M \lvert g(x) \rvert .$
Analogously, $f(x)=\Omega(g(x))$ as $x \to \infty$ implies that there exist fixed constants $x_0, M>0$, such that for all $x \ge x_0$ we have $\lvert f(x) \rvert \ge M \lvert g(x) \rvert.$
If $f(x)=\Omega(g(x))$ and $f(x)=O(g(x))$ as $x \to \infty$, then one uses $f(x)=\Theta(g(x))$ as $x \to \infty$. Sometimes we do not write the "as $x \to \infty$" if the context is clear.

\begin{defi}
	Given a graph $G$ and a vertex $v$, the blow-up of a vertex $v$ of a graph $G$ by a graph $H$ is constructed as follows.
	Take $G \backslash v$ and connect all initial neighbours of $v$ with all vertices of a copy of $H.$	
	When taking the blow-up of a vertex $v$ of a digraph $D$ by a digraph $H$, a directed edge between a vertex $w$ of $D \backslash v$ and a vertex $z$ of $H$ is drawn if and only if initially there was a directed edge between $w$ and $v$ in the same direction. 
	When taking the blow-ups of multiple vertices, for neighbouring vertices $v_1$ and $v_2$, all vertices of the corresponding graphs $H_1$ and $H_2$ are connected as well in the blow-up (possible in one direction in the digraph case). Equivalently, one can take the blow-up of the different vertices one at a time at the resulting graph of the blow-up in the previous step.  

\end{defi}

Let $G_{n,r,s}$, where $n \ge 2r$ and $1 \le s \le \frac{n-2r+2}{2}$, be the graph obtained by taking two blow-ups of two consecutive vertices in a cycle $C_{2r}$ by cliques $K_s$ and $K_{n-2r+2-s}$ respectively. 
Note that $\omega(G_{n,r,s})=n-2r+2.$

Let $D_{2r,r,1}$ be a digraph with $2r$ vertices $v_1,v_2, \ldots v_r$ and $w_1, w_2, \ldots, w_r$, such that there are directed edges from $v_i$ to $v_j$ and from $w_i$ to $w_j$ if and only if $j \le i+1$ and a directed edge from any $v_i$ to $w_1$ and from any $w_i$ to $v_1.$

Let $D_{n,r,s}$, $n \ge 2r$ and $1 \le s \le \frac{n-2r+2}{2}$, be the digraph obtained by taking the blow-up of $v_1$ by a bidirected clique $K_s$ and a blow-up of $w_1$ by a bidirected clique $K_{n-2r+2-s}$.

\section{Conjecture~\ref{conjchen} for large order}\label{AsProofChen}
In this section we will prove the conjecture of Chen et al, Conjecture~\ref{conjchen}, for sufficiently large order compared with the radius. The main idea in the proof is that in a possible extremal graph, there is a big blow-up (clique) and these vertices add at least some value to the total distance, where equality only holds if we have some structure close to our conjectured extremal graphs. More detailed, in this section we start with the calculation of the total distance of the graphs $G_{n,r,s}$ and $D_{n,r,s}$, which we want to prove to be extremal. This is done in Lemma~\ref{lem:compu}. Next, in Lemma~\ref{lem1} we show that graphs with small total distance contain a big clique.
In Lemma~\ref{lem2} we prove that when the total distance is small, there is at least one vertex which does not influence other distances nor the radius.
Lemma~\ref{lem3} then describes the minimal sum of distances using such a vertex, where equality happens if and only if we have a certain base structure. To finish the proof, we show that if the order is large, equality in Lemma~\ref{lem3} has to occur in some step. From that, we derive a lowerbound of the total distance which equals the total distance of the graphs of the form $G_{n,r,s}$ and we conclude.

\begin{lem}\label{lem:compu}
		For every $r \ge 2$, $n \ge 2r$ and $1 \le s \le \frac{n-2r+2}{2}$, we have
	\begin{equation}\label{eq:1}
	W(G_{n,r,s})=\binom{n}{2}+(r-1)^2n -r(r-1)^2
	\end{equation}
	\begin{equation}\label{eq:2}
	W(D_{n,r,s})=2\binom{n}{2}+(r-1)^2 n-4 \binom{r}{3}.
	\end{equation}
	
\end{lem}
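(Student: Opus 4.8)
The plan is to recognize both $G_{n,r,s}$ and $D_{n,r,s}$ as weighted blow-ups of a fixed base on $2r$ vertices — the cycle $C_{2r}$ in the graph case and $D_{2r,r,1}$ in the digraph case — and to compute the Wiener index through a weighted decomposition of the distance sum. I assign to each base vertex the size of the clique replacing it: weight $s$ to the first blown-up vertex, weight $m-s$ to the second (where $m := n-2r+2$), and weight $1$ to each of the remaining $2r-2$ vertices, so the weights sum to $n$. The only facts needed are that two copies inside one blown-up clique lie at distance $1$, that copies in distinct classes inherit the base distance between those classes (the two blown-up classes being consecutive, hence fully joined, at base distance $1$), and that the blow-up preserves base distances between classes (no geodesic is shortened by routing through a clique). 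A clean way to anticipate the $s$-independence asserted by both formulas is that the resulting expression splits into an $s$-free base contribution plus a correction that collapses to a symmetric function of $a := s-1$ and $b := m-s-1$ depending only on $a+b=m-2$.

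For the graph, writing each weight as $1+\delta_i$ with nonzero excess only at the two blown-up vertices, expanding $\sum_{i<j} w_iw_j\,d_C(i,j)$ over cycle distances $d_C$, and adding the within-class contribution $\binom{s}{2}+\binom{m-s}{2}$ gives
\[
W(G_{n,r,s}) = W(C_{2r}) + (m-2)\,D + \binom{m}{2} - m + 1,
\]
where $D$ is the distance sum from one vertex of $C_{2r}$; the $s$-dependent pieces $(s-1)(m-s-1)+\binom{s}{2}+\binom{m-s}{2}$ combine to the constant $\binom{m}{2}-m+1$. Substituting the standard even-cycle values $D=r^2$ and $W(C_{2r})=\tfrac{(2r)^3}{8}=r^3$ and simplifying with $m=n-2r+2$ yields \eqref{eq:1}.

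The digraph case runs parallel but first requires the directed distances in $D_{2r,r,1}$. The backward and cross edges make every non-advancing hop cost $1$, while advancing along the directed cycle $v_1\to\cdots\to v_r\to w_1\to\cdots\to w_r\to v_1$ has no shortcuts, so $d(v_i,v_j)=\max(1,j-i)$ within a chain; moreover one can enter the opposite chain only at its index-$1$ vertex, giving $d(v_i,w_j)=j$ and symmetrically $d(w_i,v_j)=j$. From these I extract, for a blown-up vertex, the out-sum $S^+(v_1)=\binom{r}{2}+\binom{r+1}{2}=r^2$, the in-sum $S^-(v_1)=2r-1$, and $d(v_1,w_1)+d(w_1,v_1)=2$, with the same values at $w_1$ by the $v\leftrightarrow w$ symmetry. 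The ordered-pair weighted decomposition then gives
\[
W(D_{n,r,s}) = W(D_{2r,r,1}) + (m-2)\bigl(S^+ + S^-\bigr) + 2ab + a^2 + b^2 + (a+b),
\]
and since $2ab+a^2+b^2=(a+b)^2=(m-2)^2$ the whole correction depends only on $m$, again killing the $s$-dependence. It remains to evaluate $W(D_{2r,r,1}) = 2\sum_{i=1}^r S^+(v_i)$: summing out-distances along a chain contributes $\binom{r}{2}+\binom{r+1}{3}$ and the cross-distances contribute $r\binom{r+1}{2}$, giving $W(D_{2r,r,1}) = \tfrac{1}{3}\bigl(4r^3+6r^2-4r\bigr)$. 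Using $S^+ + S^- = r^2+2r-1$ so that $(r^2+2r-1)+(m-2)+1 = n+r^2$ collapses the correction to $(n-2r)(n+r^2)$, and substituting $m=n-2r+2$ produces \eqref{eq:2}.

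The conceptual heart is light — it is the cancellation of $s$ — so the main obstacle is the careful distance bookkeeping in the digraph: one must verify that no geodesic profits from detouring through a blown-up clique or from mixing the two chains, so that the displayed distance formulas (and hence $S^+$, $S^-$ and $W(D_{2r,r,1})$) are \emph{exactly} correct rather than merely correct up to the advancing/non-advancing dichotomy. Once those are pinned down, everything else is routine summation and algebraic simplification, which I would organize throughout around the substitution $m=n-2r+2$.
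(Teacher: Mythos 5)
Your proposal is correct. Every distance fact you assert about $C_{2r}$ and $D_{2r,r,1}$ checks out ($d(v_i,v_j)=\max(1,j-i)$ for $i\neq j$, $d(v_i,w_j)=j$, $S^+(v_1)=r^2$, $S^-(v_1)=2r-1$), your value $W(D_{2r,r,1})=\tfrac13\left(4r^3+6r^2-4r\right)$ agrees with the paper's $2\binom{r+2}{3}-2r+r^2(r+1)$, and both of your weighted expansions simplify to \eqref{eq:1} and \eqref{eq:2}. Where you differ from the paper is in the choice of decomposition and in how $s$ disappears. The paper re-partitions $G_{n,r,s}$ as a cycle $C_{2r}$ (one representative per blown-up vertex) plus a \emph{single} clique $K_{n-2r}$ containing all excess copies; every excess copy is adjacent to three consecutive cycle vertices, hence has the same distance sum $r^2+1$ to the cycle (resp.\ in-sum $2r$ and out-sum $r^2+1$ in the digraph case), so $s$ never enters the computation at all — the independence is structural. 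You instead keep the two blown-up classes separate, carry weights $s$ and $m-s$, and make $s$ cancel algebraically via $ab+\binom{a+1}{2}+\binom{b+1}{2}=\binom{a+b+1}{2}$ (resp.\ $2ab+a^2+b^2=(a+b)^2$). The paper's re-partition buys a shorter computation: three sums and no cancellation to notice. Yours buys generality and transparency: the weighted expansion is a formula for the Wiener index of an arbitrary clique blow-up of any base (di)graph, and it exhibits exactly why the answer depends only on $a+b=n-2r$. Both arguments rest on the same two pillars — that blow-ups preserve base distances, and the distance bookkeeping in $D_{2r,r,1}$ — which you correctly single out as the only delicate points.
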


\begin{proof}
	The graph $G_{n,r,s}$ can be constructed in a different way.
	Start with a cycle $C_{2r}=(v_1v_2 \ldots v_{2r})$ and a $K_{n-2r}$.
	Connect $s-1$ vertices of $K_{n-2r}$ with $v_1, v_2$ and $v_3$, and the other $n-2r-(s-1)$ vertices with $v_2, v_3$ and $v_4$.
	The sum of distances between any vertex of the cycle $C_{2r}$ with respect to all other vertices of the cycle equals $1+2+\ldots +r +(r-1)+\ldots +1 = r^2$. Due to double counting, one has $W(C_{2r})=\frac{2r \times r^2}{2}=r^3.$
	The sum of distances between a vertex of the $K_{n-2r}$ and all vertices of $C_{2r}$ is now exactly one more, i.e. $r^2+1$.
	Due to these observations, we can now compute $W(G_{n,r,s}).$
	
	\begin{align*}
	W(G_{n,r,s})&=W(C_{2r})+W(K_{n-2r})+\sum_{u \in C_{2r}, v \in K_{n-2r}} d(u,v)\\
	&=r^3+\binom{n-2r}{2}+(n-2r)(r^2+1)\\
	&=\binom{n}{2}+(r-1)^2n -r(r-1)^2
	\end{align*}
	
	In the digraph case, one can build $D_{n,r,s}$ from a $D_{2r,r,1}$ and a $K_{n-2r}$ with some additional edges.
	First, we will compute $W(D_{2r,r,1})$ where $v_i, w_i$ are used as has been done in the definition of $D_{2r,r,1}$ at the end of Section~\ref{not&def}.
	The factors $2$ are due to symmetry between $v$ and $w$.
	The factor $r$ in the second term is due to the fact that $d(v_i,w_j)=j$ for every $1 \le i \le r.$
	\begin{align*}
	W(D_{2r,r,1})&=2\sum_{i<j} \left(d(v_i,v_j)+d(v_j,v_i) \right) + 
	2\sum_{1 \le i,j \le r} d(v_i,w_j)\\
	&=2\sum_{1\le i<j\le r} \left((j-i)+1 \right) + 2r \sum_{1 \le j \le r} j\\
	&=2\sum_{2\le j\le r}\left(\binom{j+1}{2}-1\right) + r^2(r+1) \\
	&=2\binom{r+2}{3}-2r+ r^2(r+1)\\
	\end{align*}
	
	
	Also we note that for every $v \in K_{n-2r}$, we have $\sum_{u \in D_{2r,r,1}} d(u,v)=2r$ and
	$$\sum_{u \in D_{2r,r,1}} d(u,v)=1+\left(1+2+\ldots + (r-1)\right)+\left(1+2+\ldots +r\right)=r^2+1.$$
	
	Using these computations, we can now compute $W(D_{n,r,s})$.
	\begin{align*}
	W(D_{n,r,s})&=W(K_{n-2r})+W(D_{2r,r,1})+\sum_{v \in K_{n-2r},u \in D_{2r,r,1}} \left(d(u,v)+d(v,u) \right)\\
	&=(n-2r)(n-2r-1)+2\binom{r+2}{3}-2r+ r^2(r+1)
	+(n-2r)(r+1)^2\\
	&=n^2+r(r-2)n-\frac{2r(r-1)(r-2)}3\\
	&=2\binom{n}{2}+(r-1)^2 n-4 \binom{r}{3}.  \qedhere
	\end{align*}
\end{proof}

\begin{lem}\label{lem1}
	Suppose $G$ is a graph with order $n$ and total distance $W(G) < \binom{n}{2}+an$, for some positive constant $a$.
	Then $\omega(G) \ge \frac{n}{8a},$ i.e. $G$ contains a clique of order at least $\frac{n}{8a}$. Furthermore there exists such a clique such that all its vertices have degree at least equal to $n-4a.$
\end{lem}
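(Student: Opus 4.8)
The plan is to translate the hypothesis on $W(G)$ into a bound on the number of edges of the complement $G^c$, and then extract a large clique of high-degree vertices by a standard greedy (Tur\'an-type) argument. The first step is the only genuinely problem-specific one; everything afterwards is routine extremal graph theory.

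First I would note that in the sum defining $W(G)$, every adjacent pair contributes exactly $1$, while every non-adjacent pair contributes at least $2$ (and the value $+\infty$ if $G$ is disconnected, in which case the desired bound is vacuous, so I may assume $G$ connected). Writing $E^c=\binom{V}{2}\setminus E$ for the edge set of $G^c$, this gives
$$W(G)\ \ge\ |E|+2|E^c|\ =\ \binom{n}{2}+|E^c|.$$
Comparing with the hypothesis $W(G)<\binom{n}{2}+an$ immediately yields $|E^c|<an$; that is, $G^c$ has fewer than $an$ edges, so $\sum_{v}\deg_{G^c}(v)=2|E^c|<2an$.

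To obtain the degree refinement (the \emph{furthermore} clause), I would not look for the clique in all of $G$ but only among high-degree vertices. Set $S=\{v:\deg(v)\ge n-4a\}$, equivalently the set of vertices whose $G^c$-degree is at most $4a-1$. Since the total $G^c$-degree is less than $2an$, at most $n/2$ vertices can have $G^c$-degree at least $4a$, so $|S|>n/2$. Now consider the induced subgraph $G^c[S]$: each of its vertices has degree at most $4a-1$, so its maximum degree is at most $4a-1$. A greedy selection (or the standard bound $\alpha(H)\ge |V(H)|/(\Delta(H)+1)$) then produces an independent set of $G^c[S]$ of size at least $\tfrac{|S|}{4a}>\tfrac{n}{8a}$. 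Since an independent set of $G^c$ is precisely a clique of $G$, and this set lies inside $S$, I obtain a clique of order at least $\tfrac{n}{8a}$ all of whose vertices have degree at least $n-4a$, as required.

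The main (and essentially only) obstacle I anticipate is bookkeeping around the \emph{furthermore} clause: insisting that the clique vertices have degree at least $n-4a$ forces the two-stage reduction—first discarding the $(<n/2)$ low-degree vertices and only then applying the independence bound—which costs the factor $\tfrac12$ responsible for the constant $8a$ rather than $4a$. The conceptual heart of the argument is simply the passage $W(G)\ge\binom{n}{2}+|E^c|$, which converts a Wiener-index hypothesis into a sparse-complement condition; once that is in hand the clique extraction is classical.
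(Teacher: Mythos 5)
Your proof is correct and follows essentially the same route as the paper: you define the same set $S$ of vertices of degree at least $n-4a$, show $\lvert S \rvert > \frac n2$ by the same counting (your passage through $\lvert E^c \rvert < an$ is just the paper's inequality $2W(G)\ge n(n-1)+\frac n2\cdot 4a$ rephrased in the complement), and your application of $\alpha(H)\ge \lvert V(H)\rvert/(\Delta(H)+1)$ to $G^c[S]$ is precisely the paper's greedy algorithm in disguise. No gaps; the argument is sound as written.
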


\begin{proof}
	Let $S$ be the set of vertices of degree at least $n-4a,$ equivalently the set of vertices for which strictly less than $4a$ vertices are at distance at least $2$ from $v$. 
	Note that $\lvert S \rvert \ge \frac n2$ since otherwise for at least half of the vertices $v$, we have that $\sum_{u \in V} d(u,v) \ge (n-1) + 4a$ and hence 
	$$2W(G)=\sum_{v \in V} \sum_{u \in V} d(u,v) \ge n(n-1)+\frac{n}{2}4a=2\left(\binom{n}{2}+an\right).$$
	Now the following algorithm (presented in pseudocode) returns a set $T$ of at least $\frac{n}{8a}$ vertices in $S$ which form a clique, as we will prove next.
\begin{lstlisting}
Start with $T=\emptyset$ and $U=S.$
While $U$ is nonempty, do:
	Take arbitrary $v \in U$ and set $T$:=$T \cup \{v\}$ and $U$:=$U \cap N(v)$
return $T$
\end{lstlisting}
By induction, we see that every vertex in $U$ is a common neighbour of all vertices in $T$ and $T$ is a clique. The base case is trivial since $T= \emptyset$.
In every step, when a new vertex $v$ is chosen, the vertices in $T \cup \{v\}$ form a clique since we add a vertex adjacent to all vertices of a clique.
The set $U$, only containing common neighbours of the original set $T$ by the induction hypothesis, is updated by removing the non-neighbours of $v$. Hence $U \cap N(v)$ only contains common neighbours of $T \cup \{v\}$. So the induction is done.
In every step, $\lvert T \rvert $ increases by one, while $\lvert U \rvert $ decreases by at most $4a$ since $N(v)$ contains all vertices except at most $4a$ by the definition of $S$.
This implies that the algorithm cannot terminate after less than $\frac{|S|}{4a} \ge \frac{n}{8a}$ steps and so $\lvert T \rvert$ is at least this value. This is a clique containing only vertices of $S$, i.e. all of its vertices have degree being at least $n-4a.$
\end{proof}

\begin{lem}\label{lem2}
	Let $a, r \ge 3$ be fixed integers.
	There is a value $n_0(a,r)$ such that for any $n \ge n_0$ and any graph $G$ of order $n$ and radius $r$ with $W(G)<\binom{n}{2}+a n$, there is a vertex $v \in G$ such that $G \backslash v$ has radius $r$ and the distance between any $2$ vertices of $G \backslash v$ equals the distance between them in $G.$
\end{lem}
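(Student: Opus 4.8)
The plan is to first reinterpret the hypothesis through the complement. Since $d(u,v)\ge 1$ always, $W(G)<\binom{n}{2}+an$ gives $\sum_{\{u,v\}}\left(d(u,v)-1\right)<an$; in particular $|E(G^c)|<an$ and $\sum_{v}\deg_{G^c}(v)=2|E(G^c)|<2an$, so $G$ is almost complete. Call a vertex \emph{heavy} if $\deg_{G^c}(v)\ge n/3$; since the $G^c$-degrees sum to less than $2an$, the set $B$ of heavy vertices satisfies $|B|\le 6a$. The one structural fact I need is that $d(u,v)\ge 3$ forces $N(u)\cap N(v)=\emptyset$, hence $\deg_{G^c}(u)+\deg_{G^c}(v)\ge n-2$; thus any vertex lying at distance $\ge 3$ from a non-heavy vertex must itself be heavy, and in particular every non-heavy vertex is within distance $2$ of every other non-heavy vertex. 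Because $\rad(G)=r\ge 3$, every vertex has eccentricity $\ge 3$, so for a non-heavy vertex the farthest vertex is always heavy.

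I would then dispose of the radius condition for any candidate $v\notin B$ whose removal \emph{preserves all pairwise distances}. In that case $\ecc_{G\setminus v}(x)=\max_{y\ne v}d_G(x,y)$ for every $x\ne v$. If $x$ is non-heavy, its eccentricity is witnessed by a heavy vertex $\ne v$, so $\ecc_{G\setminus v}(x)=\ecc_G(x)\ge r$; the only way an eccentricity can drop below $r$ is if a heavy $x$ has $v$ as its \emph{unique} farthest vertex, and there are at most $|B|\le 6a$ such ``dangerous'' $v$. Avoiding these, and also avoiding the unique center of $G$ if there is one, guarantees that a center of $G$ distinct from $v$ survives with eccentricity still exactly $r$ (its witnessing farthest vertex is heavy, or it is heavy and non-dangerous), while no eccentricity drops below $r$. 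Hence $\rad(G\setminus v)=r$.

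The heart is therefore to find a non-heavy $v$ whose deletion increases no distance. If deleting $v$ increases $d(x,y)$, then every shortest $x$–$y$ path uses $v$; fixing one and letting $p,q$ be the neighbours of $v$ on it, shortestness gives $p\not\sim q$, while any other common neighbour of $p,q$ would reroute the path, so $N(p)\cap N(q)=\{v\}$. Thus it suffices to pick $v$ that is \emph{not} the unique common neighbour of any non-adjacent pair; call such $v$ \emph{bad}. For a bad $v$ with witnessing pair $\{p,q\}$, every other vertex misses $p$ or $q$, so $\deg_{G^c}(p)+\deg_{G^c}(q)\ge n-1$ and one endpoint, say $b$, is heavy. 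I would group the bad vertices by this heavy endpoint $b\in B$. Within a group $V_b$ the unique-common-neighbour property makes the partners $q_v$ pairwise distinct, and each $q_v$ misses every vertex of $V_b\setminus\{v\}$, so $\deg_{G^c}(q_v)\ge |V_b|-1$. Hence $V_b$ yields $|V_b|$ distinct vertices of $G^c$-degree at least $|V_b|-1$; since at most $2an/(|V_b|-1)$ vertices have $G^c$-degree that large, $|V_b|(|V_b|-1)\le 2an$ and $|V_b|=O(\sqrt{an})$. Summing over the $\le 6a$ heavy vertices, the number of bad vertices is $O(a^{3/2}\sqrt n)=o(n)$.

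Finally, for $n\ge n_0(a,r)$ the forbidden vertices — the $O(a^{3/2}\sqrt n)$ bad ones, the $\le 6a$ heavy ones, the $\le 6a$ dangerous ones, and at most one center — number strictly fewer than $n$ (one may make this comparison transparent by selecting $v$ inside the clique of Lemma~\ref{lem1}, of size $\ge \frac{n}{8a}$, whose vertices are automatically non-heavy). Any remaining $v$ is non-heavy and not bad, so its removal preserves every distance, and the radius bookkeeping then gives $\rad(G\setminus v)=r$. I expect the main obstacle to be the counting in the third paragraph: one must check that ``bad'' is exactly the obstruction to distance preservation and then control the bad set, the delicate point being that although up to $\Theta(an)$ pairs may have a unique common neighbour, the quadratic growth of $G^c$-degrees forced inside each group $V_b$ caps the bad vertices at $O(\sqrt n)$ per heavy vertex. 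By contrast, the radius statement falls out cleanly once distance preservation and $v\notin B$ are secured.
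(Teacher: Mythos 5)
Your proof is correct, and it takes a genuinely different route from the paper's. The paper works locally inside the large clique supplied by Lemma~\ref{lem1}: it builds a hub set $S\subseteq K_k$ of size $O(a^2)$ through which every relevant shortest path can be rerouted, so that deleting any $z\in K_k\setminus S$ preserves all distances; it then bounds the set $T$ of deletions that would drop the radius to $r-1$ by a Wiener-index count (each such deletion forces $k$ pairs at distance $\ge 2$, so $|T|\,k\le an$ and $|T|<8a^2$), and finally picks $v\in K_k\setminus(S\cup T\cup\{z^*\})$, where $z^*$ is the possible unique center. You instead argue globally in the complement: you isolate an exact criterion for a deletion to increase some distance ($v$ must be the unique common neighbour of a non-adjacent pair), and cap these bad vertices at $O(a^{3/2}\sqrt n)$ via the quadratic degree-sum argument inside each group $V_b$; and you replace the Wiener-index count of $T$ by the observation that eccentricities of non-heavy vertices are always witnessed by heavy vertices, so only heavy vertices, at most $6a$ of them, can each veto one dangerous deletion. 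The only ingredient the two arguments share is that the radius can increase only when $v$ is the unique center of $G$. As for what each buys: the paper's hub-set construction keeps all forbidden sets of size $O(a^2)$, independent of $n$, and the same construction is recycled almost verbatim in the digraph setting (Lemma~\ref{lem:g2}); your argument is more elementary and self-contained---it needs neither Lemma~\ref{lem1} nor any rerouting set, and it gives an iff-characterization of distance preservation rather than a sufficient condition---at the price of a forbidden set of size $\Theta_a(\sqrt n)$, which is still $o(n)$, so the lemma follows all the same; amusingly, both roads end at $n_0=\Theta(a^3)$. One precision worth adding in a write-up: for heavy $x$, the exact condition for $\ecc(x)$ to fall below $r$ after deleting $v$ is that $v$ be the unique vertex at distance $\ge r$ from $x$; this implies your ``unique farthest vertex'' condition, so your count by $|B|$, and hence the proof, is unaffected.
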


\begin{proof}
	Let $n_0:=n_0(a, r)= 192a^3$. By Lemma~\ref{lem1}, we know that $G$ contains a clique $K_k$ with $k \ge \frac{n}{8a}=24a^2$, such that the degree of all its vertices is at least $n-4a.$
	We can take a subset $S$ of vertices of $K_k$ of size at most $ 16a^2$ such that for any $2$ vertices $v,w$ in $G \backslash K_k$ which are connected with a common neighbour in $K_k$ have a common neighbour in $S$ and any vertex $v$ in $G \backslash K_k$ with a neighbour in $K_k$ has a neighbour in $S.$

	Taking a first vertex $x$ in $K_k$, it has degree at least $n-4a$ and hence it is connected to all vertices except for $s \le 4a-1$ vertices $v_1, v_2, \ldots, v_s$ in $G \backslash K_k.$
	For every of the $s$ vertices $v_i$ which has a neighbour $x_i$ in $K_k$, add a single $x_i$ to $S.$
	For every $v_i$, $1 \le i \le s$, there are at most $4a-1$ vertices which are not neighbours of $x_i$. For any such non-neighbour of $X_i$ which has a common neighbour in $K_k$ with $v_i$, add the common neighbour to $S$.
	We end with a set $S$ of size at most $4a(4a-1)+1<16a^2$ which satisfies the properties.

	Now for any vertex $z \in K_k \backslash S$, the distance measure in $G \backslash z$ equals the restriction to $G \backslash z$ of the distance measure in $G,$ since for every two vertices in $G \backslash z$ there is a shortest path in $G$ between the two vertices that does not contain $z$ by the construction of $S$.
	This also implies that for every $u \in K_k$ different from $z$, $\ecc(u)$ attains the same value in $G$ and in $G \backslash z$. Vertices $w \in G \backslash K_k$ cannot have larger eccentricity in $G \backslash z$ than in $G.$
	As a consequence, there is at most one value $z^*$ such that $G \backslash z^*$ has radius larger than $r$.
	
	Since removing one vertex cannot decrease the radius with more than one, the radius of $G \backslash z$ is at least $r-1$.
	Equality can occur if and only if there is at least one vertex $w_z$ in $G \backslash K_k$ such that its eccentricity as a vertex in $G \backslash z$ equals $r-1$ and $d_G(z,w_z)=r$.
	Let $T$ be the set of all vertices $z \in K_k \backslash S$ for which this happens. For any fixed $z \in T$, $z$ is the only vertex at distance $r$ from $w_z$, implying that the corresponding $w_z$ are different for different $z \in T.$
	Rewriting $W(G)<\binom{n}{2}+a n$ and using that $d(u,w_z)\ge r-1 \ge 2$ whenever $u \in K_k, z \in T$, we get
	$$an>W(G)-\binom{n}{2} \ge \sum_{u \in K_k, z \in T} \left( d(u,w_z)-1 \right) \ge \lvert T \rvert k.$$
	Since $\frac nk \le 8a$, this implies that $\lvert T \rvert < 8a^2.$
	Combining with $k \ge 24a^2$ and $|S|<16a^2$, we can choose a vertex $v \in K_k \backslash (S \cup T \cup \{z^*\})$ as the latter is non-empty.
	This vertex $v$ is a vertex satisfying the statement of the lemma.
 \end{proof}


\begin{lem}\label{lem3}
	Let $G$ be a graph with radius $r$ and order $n$ such that there is some vertex $v \in G$ such that $G \backslash v$ has also radius $r$ and the same restricted distance function.
	Then \begin{equation}\label{eq:diffW}
	W(G) \ge W(G \backslash v)+n-1+(r-1)^2.
	\end{equation}    
	Equality occurs if and only if there is a path $\Q=w_{r}w_{r-1}\ldots w_1 u_1 u_2 \ldots u_r$ as subgraph of $G$, where $v=w_1$, such that $d_G(w_r,u_1)=d_G(w_1,u_r)=r$ and $d(v,w)=1$ for every vertex $w \in G$ which is not on this path.
\end{lem}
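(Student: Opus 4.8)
The plan is to first note that, because removing $v$ leaves every other pairwise distance unchanged (by hypothesis), $W(G)-W(G\backslash v)=\sum_{u\neq v} d(v,u)$, the transmission of $v$. Writing this as $(n-1)+\sum_{u\neq v}\bigl(d(v,u)-1\bigr)$, inequality~\eqref{eq:diffW} is equivalent to the claim that the \emph{excess} $E:=\sum_{u\neq v}\bigl(d(v,u)-1\bigr)$ is at least $(r-1)^2$. Grouping vertices by their distance layers $L_k=\{u: d(v,u)=k\}$ from $v$ gives $E=\sum_{k\ge 1}(k-1)\lvert L_k\rvert$, so it suffices to guarantee enough vertices in the deep layers.

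The key is to exhibit two geodesics starting at $v$, sharing only $v$, of lengths at least $r$ and at least $r-1$. Along any geodesic $v=z_0z_1\ldots z_\ell$ one has $d(v,z_i)=i$, so it places exactly one vertex in each of $L_1,\ldots,L_\ell$ and contributes $\sum_{i=1}^{\ell}(i-1)=\binom{\ell}{2}$ to $E$; two such arms meeting only in $v$ therefore contribute $\binom{r}{2}+\binom{r-1}{2}=(r-1)^2$. The first arm is free: since $\rad(G)=r$ we have $\ecc(v)\ge r$, so a geodesic from $v$ to a farthest vertex $p$ has length $e:=\ecc(v)\ge r$. For the second arm I would use $\rad(G\backslash v)=r$ together with the distance-preservation hypothesis, applied to the neighbour $x_1$ of $v$ on the first arm, where $\ecc_{G\backslash v}(x_1)\ge r$. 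If $p$'s BFS-subtree of $v$ were the only one of depth $\ge r-1$, then every vertex outside it would lie within $1+(r-2)=r-1$ of $x_1$ (through $v$, using that distances are preserved), and every vertex inside it within $e-1$ of $x_1$, so $\ecc_{G\backslash v}(x_1)\le\max(e-1,r-1)$. When $e=r$ this is $r-1<r$, a contradiction; hence a second subtree of depth $\ge r-1$ exists and supplies the disjoint second arm.

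The main obstacle is the regime $\ecc(v)=e>r$, where the vertex far from $x_1$ furnished by $\rad(G\backslash v)$ may sit in the \emph{same} subtree as $p$, so the two arms need not be disjoint. I would dispatch this by cases on $e$: if $e\ge 2r-1$ the single arm to $p$ already gives $\binom{e}{2}\ge\binom{2r-1}{2}=(2r-1)(r-1)\ge(r-1)^2$; in the intermediate range $r<e<2r-1$ I would apply the radius condition of $G\backslash v$ to a suitably chosen deeper vertex of the deep subtree to split off a second long branch, then bound $E$ over the union of the two branches, noting that their only overlap is the initial segment through $x_1$ and so costs merely a lower-order term. Finally, for equality I would trace tightness back through these steps: it forces $e=r$, each arm to be a bare geodesic with a single vertex in each layer (hence of lengths exactly $r$ and $r-1$), and every remaining vertex to lie in $L_1$. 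Writing the length-$r$ arm as $u_1\ldots u_r$ and the length-$(r-1)$ arm as $w_2\ldots w_r$ with $v=w_1$ recovers the path $\Q=w_r\ldots w_1u_1\ldots u_r$; the requirements that each arm be a genuine geodesic and that there be no shortcut between them translate exactly into $d_G(w_1,u_r)=r$ and $d_G(w_r,u_1)=r$, with all off-path vertices at distance $1$ from $v$, as claimed.
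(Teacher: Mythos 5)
Your overall route is the same as the paper's: the identity $W(G)-W(G\backslash v)=(n-1)+E$ with $E=\sum_{u\neq v}\left(d(v,u)-1\right)$, the two-arm strategy with target $\binom{r}{2}+\binom{r-1}{2}=(r-1)^2$, and the case split on $e=\ecc(v)$. Your case $e\ge 2r-1$ is exactly the paper's, and your subtree argument for $e=r$ is a correct variant of it (take BFS-tree paths as the two arms so that they meet only at $v$). The genuine gap is the intermediate regime $r<e<2r-1$, which is the crux of the lemma and which your proposal only gestures at. The first missing piece is the choice of vertex: ``a suitably chosen deeper vertex'' is not a proof, and the choice is delicate. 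The paper applies the radius condition of $G$ to the vertex $u_{e-r+1}$ at depth exactly $e-r+1$ on the first arm. This depth is calibrated so that (using $e\le 2r-2$) every vertex of the first arm lies within distance $r-1$ of $u_{e-r+1}$, hence the far vertex $z$ with $d(u_{e-r+1},z)\ge r$ lies off the first arm; and so that any geodesic from $v$ to $z$ must branch off the first arm at depth $i\le e-r$, since $i\ge e-r+1$ would give $d(v,z)\ge e+1>\ecc(v)$. A genuinely \emph{deeper} choice fails: applying the radius condition to $p$ itself can simply return $z=u_1$, because $d(p,u_1)=e-1\ge r$ in this regime, and then no second branch is produced at all.

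The second missing piece is the accounting, and here the phrase ``costs merely a lower-order term'' is not available to you. The inequality $E\ge(r-1)^2$ must be exact, and the margin in this regime is $O(1)$: the correct bound is $\binom{e}{2}+\binom{2r-e-1}{2}$, which at $e=r+1$ equals $(r-1)^2+2$, while the overlap you propose to discard can contribute as much as $\binom{e-r}{2}=\Theta(r^2)$. What actually closes the case is quantitative: by the triangle inequality, $d(u_i,z)\ge r-(e-r+1-i)=2r-e-1+i$, so the tail of the second branch beyond the branch point contains at least $2r-e-1+i$ vertices sitting at depths $i+1,i+2,\dots$; this gain in depth (growing with $i$) is what pays for the shared initial segment, and summing plus the QM--AM/convexity step yields $E\ge\binom{e}{2}+\binom{2r-e-1}{2}\ge(r-1)^2$, with equality forcing $e=r$ and $i=0$. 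This computation also underlies the equality statement: note that the condition $d_G(w_r,u_1)=r$ is inherited from the \emph{choice} of $z$ as a vertex at distance $r$ from $u_1$; it does not follow from layer-counting tightness alone, so your subtree-produced second arm would require this to be derived separately from $\ecc(u_1)\ge r$. Until these steps are supplied, the lemma remains unproved in its hardest case.
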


\begin{proof}
	Let the eccentricity of $v$ in $G$ be $r' \ge r.$
	This implies that there exists a path $\P = vu_1u_2 \ldots u_{r'}$ in $G$ which is the shortest path between $u_{r'}$ and $v=u_0$.
	If $r' \ge 2r-1$, then $$W(G)- W(G \backslash v)-(n-1) = \sum_{u \in G,u \not=v} \left( d(u,v)-1 \right) \ge \frac{(r'-1)r'}{2}>(r-1)^2$$ and Inequality~\ref{eq:diffW} holds strictly.
	Since $G$ has radius $r$, there exists a vertex $z$ with $d(u_{r'-r+1},z)=r$.
	Since we now assume $r'<2r-1$, $z$ is not a vertex of $\P.$
	Take a shortest path $\P'$ from $v$ to $z$ and let $u_i$ be the vertex in $\P \cap \P'$ with the largest index, where we take $i=0$ if $\P \cap \P' = v$, i.e. we set $v=u_0.$
	Note that $i<r'-r+1$ since otherwise we have $d(v,z)=d(v,u_{r'-r+1})+d(u_{r'-r+1},z)=r'-r+1 + r=r'+1$, which is a contradiction since	$\ecc(v)=r'.$
	
	By applying the triangle inequality, we have that $d(u_{r'-r+1},u_i)+d(u_i,z)  	\ge d(u_{r'-r+1},z)=r$, i.e. $d(u_i,z) \ge r-(r'-r+1-i)=2r+i-r'-1$.
	We now can estimate $W(G)- W(G \backslash v)-(n-1)$ again.
	
	\begin{align*}
	\sum_{u \in G,u \not=v} \left( d(u,v)-1 \right) &\ge \sum_{j=1}^{r'}\left(  d(v,u_j)-1\right) + \sum_{u \in \P' \backslash \P} \left( d(u,v)-1 \right) \\
	 &\ge \sum_{j=1}^{r'} (j-1) + \sum_{j=1}^{2r+i-r'-1} (i+j-1)\\
	&\ge\frac{(r'-1)r'}{2}+\frac{(2r-r'-2)(2r-r'-1)}{2}\\
	&= \frac{r'^2-2r+1+(2r-r'-1)^2 }{2}\\
	&\ge \frac{(r'-1)^2+\left(2r-2-(r'-1)\right)^2 }{2}\\
	&\ge (r-1)^2.
	\end{align*}
	Here we used $2r-1>r'\ge r$, $i \ge 0$ and the inequality between the quadratic and arithmetic mean (QM-AM). Equality occurs if and only if $r'=r$, $i=0$, $d(v,z)=r-1$ and $d(w,v)=1$ for every vertex $w$ which is not part of $\P$ nor of $\P'$. These conditions are equivalent with the characterization given in the statement of this lemma.
\end{proof}

\begin{proof}[Proof of Theorem~\ref{main}]
	Take $a=a(r)=(r-1)^2$ and $n_1:= n_1(r)=n_0(r)+a(r)n_0(r),$ where $n_0$ is chosen as in Lemma~\ref{lem2}.
	Let $n \ge n_1.$
	Let $G$ be a graph of order $n$ and radius $r$ with minimal total distance among such graphs.
	From the minimum total distance assumption, we have that such a graph $G$ satisfies $W(G)<\binom{n}{2}+a n$, where $a:=a(r)=(r-1)^2$ due to the example $G_{n,r,s}$ and Equation~\ref{eq:1}.
	Let $G=G_n$.
	By iterating Lemma~\ref{lem2} we can find a sequence of $n-n_0 \ge an_0$ vertices $v_{n_0+1}, \ldots, v_{n}$ such that the graphs $G_i=G_{i+1}\backslash v_{i+1}$ for $n_0 \le i \le n$ all have radius $r$, the distance function of $G_j$ equals the distance function of $G_i$ restricted to the vertices of $G_j$ for every $n_0 \le  j<i \le n$ and $W(G_i) \le  \binom{i}{2}+ai$ for every $n_0 \le i \le n$.
	
	The latter expression is true by induction as the base case $W(G)<\binom{n}{2}+a n$ holds and Lemma~\ref{lem3} implies the induction step
	$$W(G_i) \le W(G_{i+1})-i-(r-1)^2<\binom{i+1}{2}+a (i+1) - i -a = \binom{i}{2}+ai.$$

	If there is no equality in any of these steps using Lemma~\ref{lem3}, then 
	
\begin{align*}
	W(G) &\ge W(G_{n_0})+\sum_{i=n_0+1}^{n} \left( i+(r-1)^2 \right)\\
	&> \binom{n_0}{2} + \sum_{i'=n_0}^{n-1} i' +(n-n_0)+(n-n_0)(r-1)^2\\
	&\ge  \binom{n}{2}+an_0+ (n-n_0)a\\
	&= \binom{n}{2}+an
\end{align*}
	which is a contradiction.
	
	Let $v=v_m$ be a vertex whose addition gives equality in Lemma~\ref{lem3},
	then we know part of the characterization of the graph $G_m$ due to Lemma~\ref{lem3}.
	We note that $u_i$ is not connected with $w_j$ when $j<i$ as otherwise $d(w_1,u_i)=j<i$.
	Similarly, $u_i$ is not connected with $w_j$ when $j>i$ as otherwise $d(u_1,w_j)=i<j$.
	Also vertices $u_i$ and $w_i$ are not connected when $1<i \le r-1$ as otherwise $\ecc(w_i)<r$.
	For any neighbour $w$ of $v=w_1$, it is easy to see that we need $N[w] \cap \Q \subset \{w_3,w_2,w_1,u_1,u_2\}$ as otherwise $d(w_1,u_r)$ or $d(u+1,w_r)$ is less than $r$.
	A small case distinction shows that $N[w] \cap \Q$ is part of one of the sets $\{w_3,w_2,w_1\},\{w_2,w_1,u_1\}, \{w_1,u_1,u_2\},$ as otherwise there is some vertex with eccentricity strictly smaller than $r$.
	
	Also if $2$ vertices $x,y$ of $\Q$ satisfy $N[x] \cap \Q=\{w_3,w_2,w_1\}$ and $N[y] \cap \Q= \{w_1,u_1,u_2\}$, they cannot be connected, since otherwise $\ecc(y)<r.$
	Now $\sum_{x,y \in \Q} d(x,y) \ge W(C_{2r})=r^3$,
	$\sum_{x,y \in G_m \backslash \Q} d(x,y) \ge W(K_{m-2r})$ and
	$\sum_{x\in \Q,y \in G_m \backslash \Q} d(x,y) \ge (m-2r)(r^2+1)$.
	
	Using the previous observations, we conclude that the graph with minimal total distance at this point (i.e. equality in the three above estimates) was some $G_{m,r,s}$.
	As $W(G_m)=W(G_{m,r,s})$, we will need equality in Lemma~\ref{lem3} when applying to every $G_k$ and $v_k$ with $n \ge  k > m$ 
	and so $G$ is also of the form $G_{n,r,s}$.
\end{proof}

\section{Conjecture~\ref{conjchendigraph} for large  order}\label{AsProofChendigraph}

In this section, we prove that Conjecture~\ref{conjchendigraph} does also hold when the order is sufficiently large in terms of the outradius.
The main ideas are somewhat similar as in Section~\ref{AsProofChen}.
In Section~\ref{subsec:Min_givenrad} we briefly discuss the analog given the radius instead of outradius of a digraph.


\begin{lem}\label{lem:g1}
	Let $D$ be a digraph with average total degree at least being equal to $2(n-1)-2t$. Then at least half of the vertices have a total degree which is at least $2(n-1)-4t+1.$ 
	Also $\omega(D) \ge \frac{n}{8t},$ i.e. $D$ contains a bidirected clique of size at least $\frac{n}{8t}$ all of whose vertices have total degree more than $2(n-1)-4t.$
\end{lem}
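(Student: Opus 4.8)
The plan is to mirror the proof of Lemma~\ref{lem1}, replacing the degree bookkeeping of the graph case by the total degree and adapting the greedy step so that it produces a \emph{bidirected} clique. The two assertions split naturally into a Markov-style averaging argument and then a greedy extraction.

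For the first assertion I would run the averaging argument on the total-degree deficiency $\delta(v) := 2(n-1) - \deg(v)$, which is nonnegative since no vertex of a digraph on $n$ vertices can have total degree exceeding $2(n-1)$. The hypothesis on the average total degree rewrites as $\sum_{v \in V} \delta(v) = 2n(n-1) - \sum_{v} \deg(v) \le 2tn$. Were strictly more than $n/2$ vertices to satisfy $\delta(v) \ge 4t$ (equivalently $\deg(v) \le 2(n-1)-4t$), the total deficiency would exceed $\tfrac{n}{2}\cdot 4t = 2tn$, a contradiction. Hence at most $n/2$ vertices have $\delta(v) \ge 4t$, so at least half the vertices satisfy $\delta(v) \le 4t-1$, i.e. $\deg(v) \ge 2(n-1)-4t+1$. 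Let $S$ denote this set, so $\lvert S \rvert \ge n/2$ and every $v \in S$ has total degree exceeding $2(n-1)-4t$.

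The step where the directed setting genuinely differs from Lemma~\ref{lem1}, and the one I expect to be the main thing to get right, is counting how many vertices fail to be joined to a given $v \in S$ by a bidirected edge, i.e. fail to lie in $N^+(v) \cap N^-(v)$. A vertex $u \ne v$ is such a failure precisely when the arc $v\to u$ or the arc $u \to v$ (or both) is absent. Since $\delta(v)$ equals the number of missing out-arcs plus the number of missing in-arcs at $v$, each failing $u$ contributes at least one to $\delta(v)$; consequently the number of vertices of $V \setminus \{v\}$ not bidirectionally adjacent to $v$ is at most $\delta(v) \le 4t-1$. This is the asymmetry-induced subtlety, but it reduces to the simple observation that one missing arc in either direction already disqualifies $u$.

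With this bound in hand, the greedy extraction of Lemma~\ref{lem1} carries over essentially verbatim, with $N(v)$ replaced by the bidirected neighbourhood $N^+(v)\cap N^-(v)$: initialize $T=\emptyset$ and $U=S$, and while $U \ne \emptyset$ pick $v \in U$, add it to $T$, and set $U := U \cap N^+(v)\cap N^-(v)$. Induction shows that $T$ is always a bidirected clique and that $U$ consists of common bidirected neighbours of $T$. At each step $\lvert U \rvert$ decreases by at most $4t$, namely $v$ itself together with its at most $4t-1$ non-bidirected-neighbours, so the loop runs for at least $\lvert S\rvert/4t \ge n/(8t)$ iterations. The resulting $T$ is therefore a bidirected clique of order at least $\tfrac{n}{8t}$ contained in $S$, whence $\omega(D) \ge \tfrac{n}{8t}$ and all its vertices have total degree exceeding $2(n-1)-4t$, as required.
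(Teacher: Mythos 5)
Your proposal is correct and follows essentially the same route as the paper: the first assertion is the same averaging/contradiction argument (the paper argues directly that more than half the vertices having total degree at most $2(n-1)-4t$ would force the average below $2(n-1)-2t$, which is exactly your deficiency computation), and the second assertion is the paper's greedy extraction from Lemma~\ref{lem1} with $N(v)$ replaced by $N^+(v)\cap N^-(v)$ and the same bound of at most $4t$ vertices lost from $U$ per step. Your explicit accounting that one missing arc in either direction already disqualifies a vertex from the bidirected neighbourhood is precisely the observation the paper relies on implicitly.
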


\begin{proof}
	The first part is true as the contrary would lead to a contradiction.
	If more than half of the vertices have total degree equal to at most $2(n-1)-4t$, then the average is smaller than $2(n-1)-2t$ as the other vertices have total degree at most $2(n-1)$.
	The second part is analogous to the proof of Lemma~\ref{lem1}, with $S$ now being the set of vertices with total degree more than $2(n-1)-4t.$
	In the algorithm, $N(v)=N^+(v) \cap N^-(v),$ will denote the set of vertices $w$ which are both in-neighbours and out-neighbours of $v$. Again in every step, at most $4t$ vertices do not belong to both the in- and outneighbourhood of the additional selected vertex, so $U$ is decreased with at most $4t$ and we started with $\lvert S \rvert \ge \frac n2$ vertices.
\end{proof}

\begin{lem}\label{lem:g2}
	Let $D$ be a digraph with outradius $r\ge 3$, order $n$ and size at least $n(n-1-t)$ such that it contains a bidirected clique $K_k$ for which all of its vertices have total degree at least $2(n-1)-4t+1.$ 
	If $k>32t^2+4t+1+\frac{tn}k$, there is a vertex $v \in K_k$ such that $D \backslash v$ has outradius $r$ and the distance between any $2$ vertices of $D \backslash v$ equals the distance between them in $D.$
\end{lem}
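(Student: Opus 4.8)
The plan is to mirror the proof of Lemma~\ref{lem2}, replacing undirected neighbourhoods by in- and out-neighbourhoods and keeping track of the two different ways in which deleting a clique vertex can disturb the outradius. Write $t' = 4t-1$. The total degree hypothesis means that for every $z \in K_k$ the number of non-out-neighbours plus the number of non-in-neighbours is at most $t'$, and this is the only consequence of the degree bound I will use.

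First I would build a small set $S \subseteq K_k$ whose deletion is harmless, in the sense that for every $z \in K_k \setminus S$ and every ordered pair $(x,y)$ with $x,y \ne z$ there is a walk from $x$ to $y$ in $D \setminus z$ of length $d_D(x,y)$. Fix one vertex $x \in K_k$ and let $E$ be the set of (at most $t'$) vertices $u \notin K_k$ with $x \not\to u$ or $u \not\to x$. For each $u \in E$ add to $S$ one out-neighbour $x_u^{+}$ and one in-neighbour $x_u^{-}$ of $u$ lying in $K_k$ (when they exist); then, for each $u \in E$ and each $w$ with $x_u^{+}\not\to w$ admitting a clique vertex $z$ with $u \to z \to w$, add one such through-vertex to $S$, and symmetrically for $w \in E$. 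This gives $|S| \le 1 + 2t' + 2(t')^2 = 32t^2 - 8t + 1$. To see that $S$ works, note that a shortest path meets $K_k$ in runs of at most two consecutive vertices (three consecutive clique vertices could be shortcut by a single chord of the bidirected clique), so it suffices to reroute a local segment $p \to z \to q$ or $p \to z_1 \to z_2 \to q$ avoiding the deleted vertex. When an endpoint lies in $K_k$ the missing arc is present in the bidirected clique; when both lie outside $K_k$, the construction of $S$ furnishes either a through-vertex $z' \in S$ with $p \to z' \to q$, or, for a length-two excursion, a common out- respectively in-neighbour of the relevant endpoint inside $S$. In each case the replacement differs from the deleted vertex (it lies in $S$) and from the other endpoints because $d_D(p,q)\in\{2,3\}$ forbids the shorter route. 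Making this rerouting argument fully rigorous, with the directedness and the length-two clique excursions, is the part I expect to be the main obstacle.

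Next I would analyse the outradius of $D \setminus z$ for $z \in K_k \setminus S$. Since distances between surviving vertices are unchanged, $\ecc^{+}_{D\setminus z}(u) \le \ecc^{+}_{D}(u)$ for all $u \ne z$, so the outradius cannot increase unless $z$ was the \emph{unique} centre of $D$; hence there is at most one exceptional vertex $z^{*}$ to discard. For the lower bound, take any clique vertex $z'' \ne z$ (here I use $k \ge 2$); the arc $z'' \to z$ gives, for every $u \ne z$ of finite eccentricity, $d_D(u,z) \le d_D(u,z'') + 1 \le \ecc^{+}_{D\setminus z}(u) + 1$, whence $\ecc^{+}_{D\setminus z}(u) \ge \ecc^{+}_{D}(u) - 1 \ge r-1$ (the infinite case being vacuous). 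Thus the outradius of $D \setminus z$ lies in $\{r-1, r\}$ for every $z \in K_k \setminus (S \cup \{z^{*}\})$.

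Finally I would bound the set $T$ of $z \in K_k \setminus S$ whose deletion drops the outradius to $r-1$. For such a $z$ there is a witness $u_z \notin K_k$ with $\ecc^{+}_{D}(u_z) = r$, $d_D(u_z, z) = r$, and every other vertex within out-distance $r-1$; the triangle inequality against the clique then forces $d_D(u_z, z'') = r-1$ for all $z'' \in K_k \setminus z$. Uniqueness of the farthest vertex makes $z \mapsto u_z$ injective, so the pairs $(u_z, z'')$ with $z \in T$ and $z'' \in K_k$ form $|T|\,k$ distinct ordered pairs, each at distance at least two and therefore a non-arc. The size hypothesis leaves at most $tn$ non-arcs, so $|T|\,k \le tn$, that is $|T| \le tn/k$. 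Combining the three estimates, $|S| + |T| + 1 \le 32t^2 - 8t + 2 + tn/k < k$ by the assumption $k > 32t^2 + 4t + 1 + \tfrac{tn}{k}$, so $K_k \setminus (S \cup T \cup \{z^{*}\})$ is non-empty and any vertex $v$ in it is distance-preserving and leaves the outradius equal to $r$, as required.
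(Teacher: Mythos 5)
Your proof is correct and takes essentially the same route as the paper's: a small set $S$ of clique vertices guaranteeing that deletion preserves distances, the unique-centre argument giving at most one vertex $z^*$ whose deletion raises the outradius, and the injective-witness plus missing-arc count showing the set $T$ of radius-decreasing vertices has size at most $tn/k$. If anything, your write-up is more careful than the paper's in two spots: the explicit rerouting of shortest paths through the clique (which the paper only asserts in one line) and the explicit exclusion of $z^*$ in the final count, which your slightly tighter bound $|S|\le 32t^2-8t+1$ accommodates, whereas the paper concludes with $v \in K_k \setminus (S \cup T)$ without mentioning $z^*$.
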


\begin{proof}
	We will first construct a set $S$ of vertices of $K_k$ of size at most $32t^2+4t+1$
	such that for any $2$ vertices $x,y$ in $D \backslash K_k$ for which there exists a vertex $v \in K_k$ such that $\vc{xv}$ and $\vc{vy}$ are edges of $D$, there is an $s \in S$ with $\vc{xs}$ and $\vc{sy}$ being edges of $D$ as well. 
	Take a first vertex $s_1$ of $K_k$ and assume $Z$ is the set of vertices $z$ of $D \backslash K_k$ such that there are not edges in both directions between $s_1$ and $z.$ 
	For any vertex $z \in Z$ that has an edge towards $K_k$ and $\vc{zs_1} \not \in A$, take an additional vertex $s_i \in K_k$ (which we put in $S$) such that there is an edge from $z$ to $s_i$. Similarly for every vertex $z \in Z$ such that there is an edge from $K_k$ to $z$ and $\vc{s_1z} \not \in A$, we take some $s_i \in K_k$ for which there is an edge from $s_i$ to $z$.
	Note at this point $\lvert S \rvert \le 4t$ due to the total degree of $s_1$ being at least $2(n-1)-4t+1.$
	Now there at most $2\cdot 4t \cdot 4t=32t^2$ pairs of vertices $(x,y)$ in $D \backslash K_k$ such that the property is not satisfied by $S$ yet.
	Adding a corresponding vertex of $K_k$ to $S$ gives a set $S$ satisfying the property.
	Now for any vertex $v \in K_k \backslash S$, the distance measure in $D \backslash v$ equals the restriction to $D \backslash v$ of the distance measure in $D.$ To see this, pick any $x,y \in D \backslash v$ and note that there is a shortest path from $x$ to $y$ in $D$ that avoids $v$.
	
		As is the case in Lemma~\ref{lem2}, we see there can be at most one vertex $z^* \in K_k \backslash S$ such that the outradius of $D \backslash z^*$ is larger than $r$ (being equal to $r+1$) since the out-eccentricities for the other vertices in $K_k$ are the same in $D$ as in $D \backslash z^*$ and the out-eccentricities for other vertices in $D \backslash K_k$ cannot become larger.

	For any vertex $v \in K_k \backslash S$, the outradius of $D \backslash v$ is at least $r-1$. In case equality holds, there is at least one vertex $w_v$ in $D \backslash K_k$ such that $d(w_v,v)=r$ and the out-eccentricity of $w_v$ as a vertex in $D \backslash v$  equals $r-1$.
	Let $T$ be the set of all vertices $v \in K_k \backslash S$ for which this happens. Note that no value $w$ can be associated with two elements $v_1, v_2 \in T$ because then $d(w,v_1)=d(w,v_2)=r$ and thus the out-eccentricity of $w$ in $D \backslash v_1, D \backslash v_2$ is still at least $r.$
	So for every $v \in T$, there is an associated $w_v$ which is not associated with another element from $T$, for which there is no edge from $w_v$ to any element of $K_k$ since $r \ge 3$.
	This implies that at least $\lvert T \rvert k$ arrows are missing, which has to be at most $tn$ due to the lowerbound on the size $|A|$ of $n(n-1-t)$, i.e. $\lvert T \rvert \le \frac{tn}k.$\\
	Since $k> 32t^2+4t+1+\frac{tn}k \ge \lvert S \rvert + \lvert T \rvert $, we can choose a vertex $v \in K_k \backslash (S \cup T).$
\end{proof}

%
%

\begin{lem}\label{lem2di}
	Let $r \ge 3$.
	There is a value $n_0(r)$ such that for any $n \ge n_0$ and any digraph $D$ of order $n$ and outradius $r$ with $W(D)<2\binom{n}{2}+a n$ for some positive integer $a$, there is a vertex $v \in D$ such that $D \backslash v$ has outradius $r$ and the distance between any $2$ vertices of $D \backslash v$ equals the distance between them in $D.$
\end{lem}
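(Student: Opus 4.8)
The plan is to mirror the graph argument of Lemma~\ref{lem2}, but now to route everything through the two preparatory digraph lemmas, Lemma~\ref{lem:g1} and Lemma~\ref{lem:g2}, with the parameter choice $t:=a$. The first task is to convert the hypothesis on $W(D)$ into the size and degree hypotheses those lemmas require. Since $W(D)<2\binom{n}{2}+an<\infty$, every distance $d(u,v)$ is finite, so $D$ is strongly connected and all the quantities below make sense. I would then record the excess over the trivial lower bound,
\[
\sum_{\substack{(u,v)\in V^2\\ u\ne v}}\bigl(d(u,v)-1\bigr)=W(D)-n(n-1)<an,
\]
and observe that every ordered pair $(u,v)$ which is \emph{not} an arrow of $D$ has $d(u,v)\ge 2$ and hence contributes at least $1$ to this sum, while arrows contribute $0$. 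Consequently $D$ is missing fewer than $an$ arrows, i.e. $\lvert A\rvert>n(n-1)-an=n(n-1-a)$.

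From $\lvert A\rvert>n(n-1-a)$ two consequences follow with $t=a$. First, the size bound $\lvert A\rvert\ge n(n-1-t)$ demanded by Lemma~\ref{lem:g2} holds. Second, since the sum of all total degrees equals $2\lvert A\rvert$, the average total degree exceeds $2(n-1-a)=2(n-1)-2t$, which is exactly the hypothesis of Lemma~\ref{lem:g1}. Applying Lemma~\ref{lem:g1} therefore yields a bidirected clique $K_k$ with $k\ge\frac{n}{8a}$, all of whose vertices have total degree at least $2(n-1)-4t+1$, so the clique hypothesis of Lemma~\ref{lem:g2} is met as well. It then remains only to verify the numerical condition $k>32t^2+4t+1+\frac{tn}{k}$ of Lemma~\ref{lem:g2}: from $k\ge\frac{n}{8a}$ we get $\frac{tn}{k}=\frac{an}{k}\le 8a^2$, so it suffices that $\frac{n}{8a}>40a^2+4a+1$. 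This holds once $n$ exceeds a bound depending only on $a$ (hence on $r$, since in the application $a=(r-1)^2$), and I would simply take $n_0(r)$ to be any value larger than $8a(40a^2+4a+1)$. With this choice Lemma~\ref{lem:g2} produces a vertex $v\in K_k$ such that $D\backslash v$ has outradius $r$ and induces exactly the same distances as $D$, which is the desired conclusion.

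The only genuinely new ingredient compared with the graph case is the opening translation between the total-distance bound and the arrow/degree counts; the elementary inequality that a non-arrow forces $d(u,v)-1\ge 1$ makes this bookkeeping routine, and everything afterwards is merely arranging the constant $t=a$ so that the hypotheses of Lemmas~\ref{lem:g1} and~\ref{lem:g2} align. I do not anticipate a substantive obstacle. The two points to watch are that the strict and non-strict inequalities are used consistently (for instance, deducing $\lvert A\rvert\ge n(n-1-t)$ from $\lvert A\rvert>n(n-1-a)$), and that because $a$ appears merely as ``some positive integer'', the threshold $n_0$ must in fact be read as depending on $a$ as well, precisely as $n_0(a,r)$ does in Lemma~\ref{lem2}.
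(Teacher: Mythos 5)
Your proposal is correct and takes essentially the same route as the paper's own proof: the paper likewise converts $W(D)<2\binom{n}{2}+an$ into the arc-count bound $\lvert A\rvert \ge n(n-1-a)$ (by contradiction rather than by summing $d(u,v)-1$, a cosmetic difference), invokes handshaking and Lemma~\ref{lem:g1} with $t=a$, and checks the numerical hypothesis of Lemma~\ref{lem:g2} with $n_0(r)=8a(40a^2+4a)+1$. Your marginally larger threshold and your closing remark that $n_0$ really depends on $a$ (harmless here since the application takes $a=(r-1)^2$) are both sound.
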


\begin{proof}
	If the size of the digraph would be smaller than $n(n-1-a)$, then at least $an$ pairs of vertices are at distance at least $2$ implying $W(D) \ge n(n-1-a)+ 2an=2\binom{n}{2}+a n$, which is a contradiction. 
	Due to the equivalent of the handshaking lemma, we know the average total degree is at least $2(n-1)-2a$ and hence by Lemma~\ref{lem:g1}  we know that $D$ contains a clique $K_k$ with $k \ge \frac{n}{8a}$, such that the (total) degree of all its vertices is at least $2(n-1)-4a.$
	Let $n_0:=n_0(r)= 8a(40a^2+4a)+1$.
	Since $k \ge \frac{n}{8a} > 40a^2+4a= 32a^2+4a+8a^2 \ge 32a^2+4a+\frac{an}{k}$ when $n\ge n_0$, the result follows from Lemma~\ref{lem:g2}.
\end{proof}

\begin{prop}\label{proplem}
	Let $D=(V,A)$ be a digraph, $v \in V$ a vertex with outeccentricity $\ecc^+(v)=r' \ge r\ge 3$.
	Let $\P=vu_1u_2\ldots u_{r'}$ be a directed path in $D$ with $d(v,u_{r'})=r'$ such that for every vertex $u_i$, $r'-r+1 \le i \le r'$, there is a vertex $x_i$ with $d(v,x_i)<d(v,u_i)+d(u_i,x_i)$ such that $d(u_i,v)+d(v,x_i)\ge r$.
	Then \begin{equation}\label{Part1}
		\sum _{u \in V, u \not =v}\left(d(v,u)-1\right) + \sum _{1 \le i \le r'} \left(d(u_i,v)-1\right) \ge (r-1)^2 \end{equation} with equality if and only if the following conditions (up to labeling) are satisfied
	\begin{itemize}
		\item $r'=r$,
		\item there is a second directed path $v w_2w_3\ldots w_{r}$ with $d(v,w_{r})=r-1$ which is disjoint from the first directed path except from the vertex $v$,
		\item $d(u_i,v)=1$ for every $1 \le i \le r$, and
		\item  $d(v,u)=1$ for all vertices $u$ not on those two directed paths.	
	\end{itemize}
\end{prop}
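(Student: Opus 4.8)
The plan is to follow the strategy of Lemma~\ref{lem3}, but to keep track of the outgoing sum $\sum_{u\neq v}(d(v,u)-1)$ and the returning sum $\sum_{i}(d(u_i,v)-1)$ at the same time. Write $a_i=d(u_i,v)$, and note that since $\P$ is a shortest out-path we have $d(v,u_j)=j$, so $\P$ already contributes $\sum_{j=1}^{r'}(j-1)=\binom{r'}{2}$ to the outgoing sum. If $r'\ge 2r-1$ then this term alone gives $\binom{r'}{2}\ge\binom{2r-1}{2}>(r-1)^2$ and the inequality holds strictly, so I may assume $r\le r'\le 2r-2$. Here the amount by which $\binom{r'}{2}$ falls short of $(r-1)^2$ is largest when $r'=r$, which I treat as the tight case and describe below; the range $r'>r$ will carry over with room to spare.

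The core idea is to read the hypotheses inside a fixed BFS-out-tree $T$ rooted at $v$ that contains $\P$. The condition $d(v,x_i)<d(v,u_i)+d(u_i,x_i)$ says exactly that $x_i$ is \emph{not} in the subtree of $u_i$, so the $T$-path to $x_i$ leaves $\P$ at some $u_\ell$ with $\ell<i$ while reaching out-depth $d(v,x_i)\ge r-a_i$. For each $u_\ell$ let $D_\ell$ be the largest out-depth of an off-$\P$ subtree hanging at $u_\ell$; these subtrees are pairwise disjoint and disjoint from $\P$, so tracing a deepest ray in each yields \[ \sum_{u\neq v}(d(v,u)-1)\ \ge\ \binom{r'}{2}+\sum_{\ell}\left(\binom{D_\ell}{2}-\binom{\ell}{2}\right), \] while the $x_i$ translate into the covering constraint $\max_{\ell<i}D_\ell\ge r-a_i$. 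Putting $M_i=\max_{\ell<i}D_\ell$ (nondecreasing in $i$) this forces $a_i-1\ge(r-1-M_i)_{+}$, and since $\binom{r}{2}+\binom{r-1}{2}=(r-1)^2$ the whole statement reduces to the purely numerical inequality \[ \sum_{\ell}\left(\binom{D_\ell}{2}-\binom{\ell}{2}\right)+\sum_{i=1}^{r}(r-1-M_i)_{+}\ \ge\ \binom{r-1}{2}. \]

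I expect this last inequality to be the main obstacle: it is precisely where depth of the second branches is balanced against the return distances, and it is morally a QM--AM statement, since depth that is created \emph{late} ($\ell$ large) is cheap in the out-sum but is then charged to the return-sum. Concretely I would first replace the true branch contribution by the staircase realising each increase of $M$ by one ray branching as late as possible (so $D_\ell=M_{\ell+1}$ only at the jumps of $M$); this is a valid lower bound and only decreases the left-hand side. Passing to the nonincreasing slack variables $w_i=r-1-M_i\ge 0$, a short smoothing argument that raises the smallest $M_i$ towards $r-1$ then shows the minimum is attained, with value exactly $\binom{r-1}{2}$, at $w\equiv 0$. Tracking equality back, $M_i\equiv r-1$ and $a_i\equiv 1$ force $r'=r$, a single second ray $vw_2\cdots w_r$ of out-depth $r-1$ disjoint from $\P$, and $d(v,u)=1$ for every remaining vertex, which is the stated extremal configuration. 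Finally, for $r'>r$ the same argument applies with the index range shifted to $[r'-r+1,r']$, the larger $\binom{r'}{2}$ comfortably absorbing the (now non-tight) deficit.
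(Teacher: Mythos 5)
Your tree decomposition is in the same spirit as the paper's proof (which splits off branch paths $Q_j$ of ``type $j$'' from $\P$ and then minimizes an explicit expression), but your reduction to the numerical inequality has a genuine gap: the covering constraint $\max_{\ell<i}D_\ell\ge r-a_i$ is false when the witness $x_i$ lies on $\P$ itself. The hypotheses allow this: $x_i=u_j$ with $j<i$ satisfies $d(v,x_i)<d(v,u_i)+d(u_i,x_i)$, and then the $T$-path to $x_i$ never leaves $\P$, so no off-path subtree is forced to exist; the only conclusion is $a_i\ge r-j\ge r-i+1$. The directed cycle $C_{r+1}$ (vertices $v,u_1,\dots,u_r$) makes this concrete: it satisfies all hypotheses with $r'=r$ (take $x_i=u_{i-1}$ for $i\ge 2$ and $x_1=v$), has no off-$\P$ branches at all, so $M_i\equiv 0$, yet $a_i=r-i+1$, contradicting your claimed bound $a_i-1\ge(r-1-M_i)_+=r-1$ for every $i\ge 2$. (The proposition itself is safe here --- the cycle gives $\binom{r}{2}+\binom{r}{2}=r^2-r>(r-1)^2$ --- but your asserted lower bound would be $\binom{r}{2}+r(r-1)$, which exceeds the true value, so it is not a valid bound.) The correct constraint is the disjunction $a_i-1\ge\min\{(r-1-M_i)_+,\,(r-i)_+\}$, and your numerical inequality must be re-proved with these weaker terms, including all mixed configurations where some $x_i$ are on $\P$ and others are not. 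This is exactly the case distinction the paper makes: it lets $x_i$ be a vertex of $\P$ ``if possible'' and handles that branch separately, obtaining the strict bound $r^2-r$ there.

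Two further steps are weaker than they need to be even in the purely off-path regime. The ``short smoothing argument'' for the numerical inequality is only a sketch, and since that inequality (with the corrected min-terms and the constraint $D_\ell\ge\ell+1$) carries the entire content of the proposition, including the equality analysis, it needs a full proof; the paper instead performs two explicit reductions (no branch of type $j\ge2$, then none of type $1$, in an optimal configuration) and then minimizes a closed-form expression in the branch length $s$. Finally, ``the range $r'>r$ will carry over with room to spare'' is an assertion, not an argument: the deficit $(r-1)^2-\binom{r'}{2}$ is still of order $r^2/2$ at $r'=r+1$, the constrained index range shifts to $[r'-r+1,r']$, and strictness for all $r'>r$ is needed for the equality characterization; the paper obtains this by showing its lower-bound expression is strictly increasing in $r'$.
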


\begin{proof}
	We will conclude by doing calculations over possible non-realizable graphs, i.e. we only take into account some important distances, the $d(u_i,v)$ and $d(v,x_i)$ and some substructure of shortest paths towards possible $x_i.$
	By doing this, it is easier to perform reductions such that we are able to focus on important substructures of the graph. As at the end equality can be obtained, there is no problem having intermediate steps where the sequence of distances is not realizable. 	
	
	For any $r'-r+1 \le i \le r'$, we let $x_i$ be a vertex on the path $\P$ if possible (that is when $d(u_i,v)+i-1 \ge r$).
	If this is not possible, we take a shortest (directed) path $\Q$ from $v$ to $x_i$ such that $\P \cap \Q$ is the directed path $vu_1 \ldots u_j$ with $j$ being minimal among all such possible choices for $\Q$ and we say $x_i$ is of type $j$.
	For every $j$, if there are multiple $x_i$ that have type $j$, we can associate the vertex $x_i$ such that $d(u_j,x_i)$ is maximal to all of them as we will only keep track of the distances $d(u_i,v)$ and do not look to possible shorter paths from a certain $u_i$ to $x_i$. 
	The shortest path from $u_j$ to the $x_i$ with $d(u_j,x_i)$ maximal will be denoted $Q_j.$

	Remark that $Q_{j_1}$ and $Q_{j_2}$ are disjoint for $j_1 <j_2$, since if there was a vertex in common, one could find a shortest path from $v$ of type $j_1$ in both cases.
	
	We will prove for $V^*$, being the set of all vertices in $\P$ and all paths $Q_j$, that
	\begin{equation}\label{eq:partialsum}
	\sum _{u \in V^*, u \not =v}\left(d(v,u)-1\right) + \sum _{1 \le i \le r'} \left(d(u_i,v)-1\right) \ge (r-1)^2.
	\end{equation}
	
	First we notice that there will be no $x_i$ of type $j$ for any $j \ge 2$ in an optimal possible graph. 
	Let the directed path $Q_j$ be $u_j z_1\ldots z_m x_i.$
	Note that if $x_k=x_i$ then $j+1 \le k \le j+d(u_j,x_i)$.
	When $k \le j$, then $d(v,x_i)=d(v,u_k)+d(u_k,x_i)$ and so one of the conditions for the choice of $x_k$ are not met.
	When $k > j+d(u_j,x_i)$ then it was possible to choose the vertex $x_k=u_{k-1}$ on $\P$ as $d(v,u_{k-1})<d(v,u_k)+d(u_k,u_{k-1})$ and $d(v,u_{k-1})=k-1\ge  j+d(u_j,x_i)=d(v,x_i)$ implying $d(u_k,v)+d(v,u_{k-1})\ge r$.
	
	If one removes $x_i$ of the digraph, takes $z_m$ as the new $x_i$ and increase all values $d(u_k,v)$ by one if $j+1 \le k \le j+d(u_j,x_i)$, then the conditions are still satisfied and the lefthandside of Equation~\eqref{eq:partialsum} decreased with at least $(j+d(u_j,x_i)-1)-d(u_j,x_i)=j-1.$
	Hence this was not optimal.

	Next, we want to prove there is no $Q_1$ as well in an optimal configuration. So suppose we have some $x_i$ of type $1$ and none of type at least $2$, so we only have the possible paths $Q_0$ and $Q_1.$
	If the path $Q_0$ its length $\ell_0$ is more than the length $\ell_1$ of $Q_1$, we can delete $Q_1.$
	In the other case we can delete $Q_0$ and $Q_1$ and add the path $Q_0=vw_2w_3 \ldots w_{\ell_1+1}x$ of length $\ell_1+1$ instead, as then the vertex $x$ can do the job of the $x_i$.
	If $d(u_1,v)>1$, we can decrease this distance with $1$ without destroying one of the necessary properties.
	If $d(u_1,v)=1$, then $r'>r$, or $r'=r$ which implies $Q_0$ had length at least $r-1$ and so some positive terms in the LHS of Equation~\eqref{eq:partialsum} disappeared.
	
	So having no $x_i$ of type $j \ge 1$, we know that there is some directed path $Q_0=vw_2w_3 \ldots w_s$ for some $2 \le s \le r'$ which is disjoint from $\P$ (except from $v$), or we could take $x_i=u_{i-1}$ in all cases (i.e. also $Q_0$ had length $0$).\\
	In the second case, we have that $d(u_i,v)+(i-1)=d(u_i,v)+d(v,u_{i-1}) \ge r$ and hence we can compute that the LHS of Equation~\eqref{eq:partialsum} is at least 
	\begin{align*}
	\sum_{1 \le i \le r'} \left(d(v,u_i)-1\right)+\sum _{r'-r+1 \le i \le r} \left(d(u_i,v)-1\right)&\ge \sum_{1 \le i \le r'} (i-1) +\sum _{r'-r+1 \le i \le r} (r-(i-1)-1)  \\
	&=\frac{r'(r'-1)}{2}+\frac{(2r-r')(2r-r'-1)}{2}\\
	&=\frac{(r'^2+(2r-r')^2)}{2}-r\\
	& \ge r^2-r>(r-1)^2.
	\end{align*}
	Here we used the inequality between the quadratic and arithmetic mean (QM-AM) at the end.
	In the first case, we have that $d(u_i,v)+s-1 \ge r$ when $r'-r+1 \le i \le s-1$ where we can assume $2 \le s \le r$, as $s\ge r+1$ implies the result trivially.
	When $s \le i \le r$, we have $d(u_i,v)+d(v,u_{i-1}) \ge r$.
	So we get that the LHS of Equation~\eqref{eq:partialsum} is at least 
	\begin{align*}
	&\sum_{i=1}^{r'} \left(d(v,u_i)-1 \right) + \sum_{i=2}^{s} \left(d(v,w_i)-1 \right) + \sum_{i=r'-r+1}^{s-1} \left(d(u_i,v)-1 \right) + \sum_{i=s}^{r} \left(d(u_i,v)-1 \right)\\
	&\ge \sum_{i=1}^{r'} (i-1) + \sum_{i=2}^{s} (i-2) + \sum_{i=r'-r+1}^{s-1} (r-s) + \sum_{i=s}^{r} (r-i)\\
	&= 
	\frac{r'(r'-1)}{2}+\frac{(s-1)(s-2)}{2}+(s+r-r'-1)(r-s)+\frac{(r-s)(r-s+1)}{2}.	\end{align*}
	This expression is strictly increasing for $r' \ge r$, so the minimum is attained when $r'=r$. So the expression reduces to $r^2-r-s+1$ which is minimal for $s=r$ and gives exactly $(r-1)^2.$ 
	Since equality cannot occur for $r'>r$, we conclude no other extremal cases could be left.
	In case of equality in Inequality~\eqref{Part1}, we also need $d(v,u)=1$ for vertices $u$ not considered (not on the paths), from which the characterization of the equality constraints is clear as well.	
\end{proof}

\begin{lem}\label{D2rr1core}
	Let $D=(V,A)$ be a digraph with outradius $r \ge 3$ containing directed paths 
	$w_1u_1u_2\ldots u_{r}$ and $w_1 w_2w_3\ldots w_{r}$ which are vertex-disjoint up to $w_1$ with $d(w_1,u_r)=r$ and $d(w_1,w_r)=r-1$.
	Let $V_1=\{w_r,w_{r-1} \ldots, w_2,w_1,u_1,u_2,\ldots, u_{r}\}$.
	Assume $d(u,w_1)=1$ for all $u \in V$ and $d(w_1,u)=1$ for all $u \in V \backslash V_1$.
	Then 
	 \begin{equation}\label{Part2}
		\sum _{x,y \in V_1} d(x,y) \ge W(D_{2r,r,1}) \end{equation} with equality if and only if $D[V_1]$ is isomorphic to $D_{2r,r,1}$. For any $y \in V \backslash V_1$ we have
		\begin{equation}\label{Part3}
		\sum_{x \in V_1} \left(d(x,y)+d(y,x)-2\right) \ge (r-1)^2 \end{equation}
	where equality occurs if and only if $D[v_1 \cup\{y\}]$ is isomorphic to $D_{2r+1,r,1}.$
\end{lem}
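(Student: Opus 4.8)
The plan is to analyse both inequalities through the ``levels'' $\ell(x) := d(w_1,x)$ measured from the apex $w_1$. The two disjoint shortest paths fix these levels on $V_1$: $\ell(u_j)=j$ and $\ell(w_j)=j-1$ for $1\le j\le r$, while the hypothesis $d(x,w_1)=1$ says every other vertex has in-distance $1$ to $w_1$. For \eqref{Part2} I would split the ordered sum over $V_1$ into the within-$w$ pairs, the within-$u$ pairs, the two cross blocks $w\to u$ and $u\to w$, and the pairs involving $w_1$ itself. The terms involving $w_1$ are known exactly (they equal $\ell(\cdot)$ outward and $1$ inward) and already match $D_{2r,r,1}$; the two within-side blocks follow from the triangle inequality $d(x,y)\ge \max\{1,\ell(y)-\ell(x)\}$, which reproduces exactly the values $\max\{1,j-i\}$ of $D_{2r,r,1}$, with equality forcing precisely the forward path-arcs and all backward arcs on each side.

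The genuine difficulty, and the step I expect to be the main obstacle, is the two cross blocks, i.e.\ bounding $\sum_{i,j} d(w_i,u_j)$ and $\sum_{i,j} d(u_i,w_j)$ below by $\tfrac{r^2(r+1)}2$ each. Here the triangle bound through $w_1$ gives only $d(w_i,u_j)\ge \max\{1,j-i+1\}$, whose total is strictly below the target (already for $r=3$ it falls short), so a purely metric argument cannot close the gap. The extra leverage must come from the outradius being exactly $r$: every vertex has out-eccentricity at least $r$, yet every external vertex and $w_1$ lie within out-distance $2$ of any $x\in V_1$ (via $x\to w_1\to\cdot$), so for $r\ge 3$ the eccentricity of each $x$ is witnessed inside $V_1$ by a vertex of level at least $r-1$, forcing a geodesic of length $\ge r$ whose intermediate vertices populate the distance shells of $x$. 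Counting these shells vertex-by-vertex, and using that any arc short-circuiting the $u$-side to the $w$-side (or conversely) would pull some out-eccentricity below $r$, I would push each cross block up to its target; summing the four blocks then yields $W(D_{2r,r,1})$, and chasing equality through all the shell counts pins down every arc and gives $D[V_1]\cong D_{2r,r,1}$.

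For \eqref{Part3} the situation is more rigid. Since $d(x,w_1)=1$ for all $x$ and $d(w_1,y)=1$, the triangle inequality gives $d(x,y)\le 2$ for every $x\in V_1$, so each incoming term $d(x,y)-1$ is $0$ or $1$ and $\sum_{x\in V_1}(d(x,y)-1)$ merely counts the $x\in V_1$ with no arc to $y$. For the outgoing sum $\sum_{x\in V_1}(d(y,x)-1)$ I would treat $y$ as a new apex: the arc $y\to w_1$ makes the two given disjoint shortest paths available from $y$, and $y$ has out-eccentricity at least $r$, so the argument of Proposition~\ref{proplem} (with $y$ in the role of $v$ and the two paths in the roles of $\P$ and $\Q$) applies almost verbatim and bounds the combined incoming-plus-outgoing contribution below by $(r-1)^2$. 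Finally I would extract the equality case: all these estimates are simultaneously tight only when $y$ duplicates the level-$1$ vertex $u_1$ (same in- and out-neighbourhoods, together with the bidirected edge to $u_1$), which is exactly the $K_2$-blow-up at $u_1$ in the construction of $D_{2r+1,r,1}$, so $D[V_1\cup\{y\}]\cong D_{2r+1,r,1}$.
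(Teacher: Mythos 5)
Your block decomposition is a reasonable frame, and your treatment of the within-side blocks, the $w_1$-blocks, and the bound $d(x,y)\le 2$ in \eqref{Part3} all match what the paper does. The two places where you defer the real work, however, both contain genuine gaps. For \eqref{Part2}, the claim that ``any arc short-circuiting the $u$-side to the $w$-side (or conversely) would pull some out-eccentricity below $r$'' is false, and the configuration it wrongly excludes is exactly the one the paper spends most of its proof handling. Take $V=V_1$, the two given paths, all arcs into $w_1$, and one extra arc $\vc{w_ru_r}$. Then $d(w_1,u_r)=r$ still holds and every vertex still has out-eccentricity exactly $r$: for $2\le i\le r$ the witness for $w_i$ is now $u_{r-1}$, at distance $1+(r-1)=r$ through $w_1$, rather than $u_r$. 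All hypotheses of the lemma are met, yet $d(w_i,u_r)\le (r-i)+1<r$ for $i\ge 2$, so an entire column of the cross block drops strictly below its $D_{2r,r,1}$ value and no term-by-term or shell-by-shell count can reach the target $\tfrac{r^2(r+1)}{2}$. The inequality survives only through a compensation argument: the arc $\vc{w_ru_r}$ forces $d(w_i,u_{r-1})=r$ and hence $d(w_i,u_j)\ge j+1$ for all $1\le j\le r-1$ and $2\le i\le r$, an aggregate excess of at least $(r-1)^2$ that strictly outweighs the maximal possible gain $\tfrac{r(r-1)}{2}+(r-1)$ (the companion shortcut $d(u_r,w_j)=j-1$ must be offset the same way, via $d(u_r,u_{r-1})=r$). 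This trade-off is the heart of the paper's proof of \eqref{Part2} and is absent from --- indeed contradicted by --- your sketch.

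For \eqref{Part3}, invoking Proposition~\ref{proplem} ``almost verbatim'' does not yield the stated inequality: the proposition's outgoing sum $\sum_{u\in V,\,u\ne v}\left(d(v,u)-1\right)$ runs over \emph{all} of $V$, whereas \eqref{Part3} is restricted to $V_1$. Every external vertex at distance $2$ from $y$ contributes $+1$ to the proposition's left-hand side, and there can be arbitrarily many such vertices, so the proposition's conclusion is strictly weaker than the $V_1$-restricted bound; the implication goes the wrong way. One could try to salvage this by using the partial-sum inequality~\eqref{eq:partialsum} from inside the proposition's proof, but then you must show the relevant paths can be taken inside $V_1$ and verify the proposition's hypothesis on the auxiliary vertices $x_i$, none of which your sketch addresses. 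The paper instead argues directly: since $d(y,u)\le 2$ for all $u\notin V_1$ and $r\ge 3$, the witness of $\ecc^+(y)\ge r$ must lie in $\{w_r,u_{r-1},u_r\}$, and a short case analysis (using the incoming terms $d(x,y)-1$ only in the case $d(y,u_r)=r$, $d(y,w_r)=r-2$) gives $(r-1)^2$. Your equality discussion is also incomplete: tightness allows $N^+(y)=\{w_2,w_1,u_1\}$ as well as $N^+(y)=\{w_1,u_1,u_2\}$, i.e.\ $y$ may duplicate $w_1$ rather than $u_1$; both give $D_{2r+1,r,1}$ up to isomorphism, but the case must be accounted for.
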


\begin{proof}[Proof of (\ref{Part2})]
	First note that for every $u \in V \backslash V_1$ and $v \in V$, we have $d(v,u) \le d(v,w_1)+d(w_1,u)\le 2$ and so $\ecc^+(v)\ge r$ implies there is a $x \in V_1$ with $d(v,x)\ge r$.
	
	By the given conditions, we know that $d(u_i,u_j)=d(w_i,w_j)=j-i$ when $1 \le i \le j \le r$ and $d(w_i, w_j),d(u_i,u_j) \ge 1$ if $j < i$.\\
	We also have $d(u_i,w_j)=j$ for all $1 \le i \le r-1$, $j$ being an upper bound since $d(u_i,w_1)=1$ and $d(w_1,w_j)=j-1$, while $d(u_i,w_j)<j$ for some $j$ would imply $d(u_i,w_r)<r$ and hence $\ecc^+(u_i)<r$, which would be a contradiction.
	
	Next, we see $d(w_i,u_j) \ge j$ for all $1 \le j \le r-1$.
	When $j=1$, this is by definition of distance and when $i=1$ this is a consequence of  the triangle-inequality $r=d(w_1,u_r)\le d(w_1,u_j)+d(u_j,u_r)=r-j +d(w_1,u_j). $
	When  $d(w_i,u_j) < j$ for some $1<j<r$ and $1<i$, we would get $\ecc^+(w_i)<r$, since $d(w_i,w_k)<r$ for all $k$, $d(w_i,u_k)\le 1+k<r$ when $k<j$ and $d(w_i,u_k)\le (j-1)+(k-j)=k-1 \le r-1$ when $k \ge j.$
	
	Inequality~\eqref{Part2} would is true if every term $d(x,y)$ is lowerbounded by the distance of the corresponding vertices in $D_{2r,r,1}$.
	The only distances for which we did not conclude that lowerbound already are of the form $d(u_r,w_j)$ for some $j$ or $d(w_i,u_r)$ for some $i$.
	The condition $d(u_r,w_j)<j$ for some $j>1$ would imply $d(u_r,w_j)=j-1$ since otherwise $\ecc^+(u_{r-1})<r$ and hence $d(u_r,u_{r-1})=r$ as $\ecc^+(u_r)=r$ and so $d(u_r,u_{i})\ge 1+i$ for all $1 \le i \le r-1.$
	Note this already implies that if $d(w_i,u_r)=r$ for all $i$, Inequality~\eqref{Part2} is strictly satisfied, since at most $r-1$ terms are one smaller than the corresponding term in $D_{2r,r,1}$, no other terms are smaller and the terms $d(u_r,u_{r-1})=r$ and $d(u_r,u_{1})=2$ are respectively $r-1$ and $1$ larger than the corresponding terms in $D_{2r,r,1}$.
	
	So we can focus on the case $d(w_i,u_r)<r$ for some $i$.
	Since $d(w_1,u_r)=r$, $d(w_1,w_i)=i-1$ and $d(w_i,u_{r-1})\ge r-1$, we see that $d(w_i,u_r)<r$ is only possible if there is an arc from $w_r$ to $u_r$ because there need to be an arc ending in $u_r$ different from $\vc{u_{r-1}u_r}$ and the other possibilities would lead to $d(w_1,u_r)<r.$
	So assume $d(w_r,u_r)=1$ and remark we now need $d(w_i, u_{r-1})=r$ for every $1<i \le r$,
	so $d(w_i,u_j) \ge j+1$ for all $1 \le j \le r-1$ and  $1<i \le r$.
	So at least $(r-1)^2$ distances are at least $1$ larger than the corresponding distances in $D_{2r,r,1}$, while the edge between $w_r$ and $u_r$ made we won only $\frac{r(r-1)}{2}$ with the terms corresponding to $d(w_i,u_r)$ for $2 \le i \le r$ and at most $r-1$ from the possibility that $d(u_r,w_j)=j-1$. But in that latter case, due to $d(u_r,u_{r-1})$ being necessarily equal to $r$, there is a term increasing with $r-1$ again. Since $\frac{ r(r-1)}2 < (r-1)^2$, Inequality~\eqref{Part3} is strict.

	We conclude that Inequality~\eqref{Part2} is always true and equality is possible if and only if $D[V_1]$ is isomorphic to $D_{2r,r,1}$.
\end{proof}

\begin{proof}[Proof of (\ref{Part3})]
	Note that $d(y,w_r) \ge r-2$ and $d(y,u_r)\ge r-1$ due to the conditions on $w_1$.
	As $\ecc^+(y)\ge r$, there is at least one vertex at distance $\ge r$, the only possible vertices for this are in $\{w_r,u_{r-1},u_r\}$.
	So we consider the three cases.\\
	If $d(y,w_r)=r$, combining this with $d(y,u_r) \ge r-1$ we already have \begin{align*}
	\sum_{x \in V_1} \left(d(y,x)-1\right) &\ge \sum_{i=2}^r (d(y,w_i)-1)+\sum_{i=2}^r (d(y,u_i)-1)\\
	&\ge \sum_{i=2}^r (i-1) + \sum_{i=2}^r (i-2)\\
	&=(r-1)^2.
	\end{align*} 
	The same holds if $d(y,u_r)\ge r$ or $d(y,u_{r-1})=r$ and $d(y,w_r)\ge r-1$. In the case $d(y,u_{r-1})=r$ it is even strict since $d(y,u_r)\ge r-1>1.$
	If $d(y,u_r)=r$ and $d(y,w_r)=r-2$, this implies $d(u_i,y)\ge 2$ for every $1\le i \le r$ as otherwise $\ecc^+(u_i)<r$.
	So now we have
	$$\sum_{x \in V_1} \left(d(x,y)-1\right) \ge r \mbox{ and }
	\sum_{x \in V_1} \left(d(y,x)-1\right) \ge \frac{r(r-1)}{2}+\frac{(r-2)(r-3)}{2}$$
	from which the result follows again as the sum is at least $(r-1)^2+1.$
	
	In the remaining case, we have $d(y,u_{r-1})=r$, $d(y,u_r) = r-1$ and $d(y,w_r) =r-2$ and the conclusion is again analogous.
	
	To have equality in Equation~\eqref{Part3}, we need $d(x,y)=1$ for every $x \in V_1$.
	Since $ecc^+(u_r)\ge r$, we need $d(y,w_r)\ge r-1$. The only cases we have to consider where equality can be attained, have $\{d(y,w_r),d(y,u_r)\}=\{r-1,r\}.$
	This implies that the only arcs from $y$ to $V_1$ can end in $w_2,w_1,v_1,v_2$ and the outneighbourhood cannot contain both $w_2$ and $v_2$. In the equality cases, $N^+(y)=\{w_2,w_1,v_1\}$ or $N^+(y)=\{w_1,v_1,v_2\}$.
\end{proof}

\begin{proof}[Proof of Theorem~\ref{maindi}]
Let $a=a(r)=(r-1)^2$ and $n_1:= n_1(r)=n_0(r)+a(r)n_0(r).$ Let $n \ge n_1.$
Note that a digraph $D$ with order $n \ge n_1$ which would be a counterexample to Theorem~\ref{maindi} will satisfy $W(D)<W(D_{n,r,1})<2\binom{n}{2}+a n$ due to Equation~\ref{eq:2}.
By Lemma~\ref{lem2di} we know that there is a sequence of $n-n_0 \ge an_0$ vertices $v_{n_0+1}, \ldots, v_{n}$ which are consecutively added, starting from some digraph $H=D_{n_0}$ with order $n_0$ and radius $r$, such that distances and the radius do not change.
Note that a digraph of radius $r$ satisfies the conditions of Proposition~\ref{proplem}.
If no addition of any of the $an_0$ vertices gives equality in Proposition~\ref{proplem}, then
\begin{align*}
W(D) &\ge W(H)+\sum_{i=n_0+1}^{n} \left(2i-1+(r-1)^2\right) \\
&> 2\binom{n_0}{2} + 2\sum_{i'=n_0}^{n-1} i' +(n-n_0)+(n-n_0)(r-1)^2\\
&\ge  2\binom{n}{2}+an_0+ (n-n_0)a\\
&= 2\binom{n}{2}+an
\end{align*}
which is a contradiction.
So we have equality in some step adding $v_m$ in Proposition~\ref{proplem} and we get a digraph $D_m$ at that step. Knowing the conditions of equality of Proposition~\ref{proplem} and that $D_m$ has outradius $r$, we may apply Lemma~\ref{D2rr1core} to conclude that $D_m$ should be of the form $D_{m,r,s}$ for some $s$ and so does the digraph at the final step.
So there was no digraph with smaller total distance and the extremal graphs are exactly of the form $D_{n,r,s}$.
\end{proof}

\section{Minimum for digraphs given order and radius}\label{subsec:Min_givenrad}

For small $r$, we easily can determine the exact minimum Wiener index of digraphs with given order and radius $r$. Note that $r$ can be an integer or a half-integer, i.e. $1 \le r$ with $r \in \frac12 \mathbb Z.$
For $r=2$, one can conclude from Theorem $3$ in~\cite{P84} that the digraph has diameter $2$ as any digraph with diameter at least $3$ has total distance at least equal to $n^2$ and the digraphs attaining equality have radius $\frac 32.$ 

\begin{prop}
	The minimum Wiener index among all digraphs $D$ with radius $r$ and order $n$ is at least
	$$
	\begin{cases}
	2\binom{n}{2} \hfill \mbox{ if } r=1,\\
	2\binom{n}{2}+\lceil \frac n2 \rceil \hfill \mbox{ if }r=\frac32 \mbox{ or}\\
	n^2 \hfill \mbox{ if }r=2 .\\
	\end{cases}
	$$
	Equality holds if and only if $D=K_n$, $D^c$ is the union of $\lceil \frac n2 \rceil$ directed edges which are spanning or $D^c$ is the union of some vertex disjoint directed cycles (possible of length $2$) which span all vertices.
\end{prop}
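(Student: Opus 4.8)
The plan is to reduce all three cases to a single counting identity. Write $m$ for the number of non-arcs of $D$, i.e. the number of arcs of $D^c$, so that $|A|=n(n-1)-m$. Splitting the sum defining $W(D)$ according to whether an ordered pair is an arc or not, each arc contributes $1$ and each non-arc contributes $d(u,v)\ge 2$, whence
\begin{equation*}
W(D)=|A|+\sum_{(u,v)\notin A}d(u,v)\ge |A|+2m=n(n-1)+m=2\binom{n}{2}+m,
\end{equation*}
with equality if and only if every non-arc $(u,v)$ satisfies $d(u,v)=2$, that is, $\mathrm{diam}(D)\le 2$. The whole argument then rests on lower-bounding $m$ in terms of the radius, using the elementary remark that $\ecc^+(x)=1$ (resp. $\ecc^-(x)=1$) holds exactly when $x$ has out-degree (resp. in-degree) $0$ in $D^c$.

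For $r=1$ there is nothing to do beyond the trivial bound $W(D)\ge 2\binom{n}{2}$ coming from $d(u,v)\ge 1$; equality forces every ordered pair to be an arc, so $D=K_n$, which indeed has radius $1$. For $r=\tfrac32$, since $\rad(D)=\tfrac32$ every vertex satisfies $\ecc^+(x)+\ecc^-(x)\ge 3$, so no vertex is isolated in $D^c$; hence $D^c$ spans $V$, and a spanning set of arcs needs at least $\lceil n/2\rceil$ of them, giving $m\ge\lceil n/2\rceil$ and $W(D)\ge 2\binom{n}{2}+\lceil n/2\rceil$. For equality I would require $m=\lceil n/2\rceil$ with $D^c$ spanning (so $D^c$ is a union of $\lceil n/2\rceil$ arcs covering all vertices) and $\mathrm{diam}(D)\le 2$. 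I would then check the latter is automatic: for a missing arc $(u,v)$ every $w\neq u,v$ yields a path $u\to w\to v$ (the unique out-non-neighbour of $u$ is $v$ and the unique in-non-neighbour of $v$ is $u$), so $d(u,v)=2$ once $n\ge 3$. Finally one verifies such a $D$ has radius exactly $\tfrac32$: every vertex has eccentricity sum $3$ or $4$, and at least one vertex has sum $3$.

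For $r=2$ the bound on $m$ needs the diameter input. If $\mathrm{diam}(D)\ge 3$, then by Plesn\'{i}k's theorem~\cite{P84} we have $W(D)\ge n^2$, and the digraphs attaining equality there have radius $\tfrac32$; since our $D$ has radius $2$ it is not among them, so $W(D)>n^2$. Hence any radius-$2$ minimiser has diameter exactly $2$, so all eccentricities are at most $2$; combined with $\rad(D)=2$, which forces $\ecc^+(x)+\ecc^-(x)\ge 4$, every vertex has $\ecc^+(x)=\ecc^-(x)=2$. Thus every vertex has out-degree and in-degree at least $1$ in $D^c$, whence $m=\sum_x\deg^+_{D^c}(x)\ge n$ and $W(D)\ge 2\binom{n}{2}+n=n^2$. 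Equality forces each out-degree and each in-degree in $D^c$ to equal $1$, i.e. $D^c$ is a vertex-disjoint union of directed cycles (length-$2$ cycles allowed) spanning $V$; conversely such a $D^c$ gives a radius-$2$, diameter-$2$ digraph with $m=n$, by the same realizability check as before.

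The routine parts are the inequality manipulations and the verifications that the missing arcs have distance exactly $2$. The main obstacle is the $r=2$ lower bound: establishing it directly for digraphs of diameter $\ge 3$ seems delicate, and the clean route is to quote Plesn\'{i}k's diameter result together with the extra information that its extremal digraphs have radius $\tfrac32$ rather than $2$, which is exactly what allows us to discard the diameter-$\ge 3$ range and reduce to the transparent diameter-$2$ case.
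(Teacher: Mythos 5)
Your proposal is correct and takes essentially the same route as the paper: the paper states this proposition without a written-out proof, giving only the key observation—identical to yours—that for $r=2$ one quotes Theorem~3 of Plesn\'{i}k~\cite{P84} (diameter at least $3$ forces $W\ge n^2$, with equality only for digraphs of radius $\frac32$) to reduce to the diameter-$2$ situation, after which counting missing arcs against eccentricity constraints is routine. Your treatment of $r=1$, $r=\frac32$, and the equality characterizations simply fills in the details the paper regards as easy, so there is nothing methodologically different to compare.
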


When $r \ge \frac 52$, one could guess that the extremal digraphs are blow-ups of simple structures closely related to e.g. a directed cycle $C_{\lceil r+1\rceil}$. 
It turns out that this is not the case.
For $r= \frac 52$, Figure~\ref{fig:digraph_rad2.5} presents at the left the structure of the extremal digraphs for $n \in \{4,5\}$ when taking $s=n-3.$ Nevertheless, for $n=6$ the structure is different as the structure at left has a larger total distance for $s=3$ than the digraph at the right~\footnote{See \url{https://github.com/StijnCambie/ChenWuAn}, document Digraph\_CWA.}. This implies that the characterization for $n \ge 2r$ cannot be as simple as it was in Conjecture~\ref{conjchen} and Conjecture~\ref{conjchendigraph}.

\begin{figure}[h]
	\centering
	\begin{tikzpicture}
	\definecolor{cv0}{rgb}{0.0,0.0,0.0}
	\definecolor{c}{rgb}{1.0,1.0,1.0}

	\Vertex[L=\hbox{$sK_1$},x=-1cm,y=0cm]{v0}
	\Vertex[L=\hbox{$v_3$},x=3.0cm,y=0]{v1}
	\Vertex[L=\hbox{$v_1$},x=2cm,y=2cm]{v2}
	\Vertex[L=\hbox{$v_2$},x=0cm,y=2cm]{v3}

	\Edge[lw=0.1cm,style={post, right}](v0)(v1)
	\Edge[lw=0.1cm,style={post, right}](v1)(v0)
	\Edge[lw=0.1cm,style={post, right}](v1)(v2)
	\Edge[lw=0.1cm,style={post, right}](v2)(v3)
	\Edge[lw=0.1cm,style={post, right}](v3)(v2)
	\Edge[lw=0.1cm,style={post, right}](v3)(v0)
	\end{tikzpicture}
	\quad
	\begin{tikzpicture}
	\definecolor{cv0}{rgb}{0.0,0.0,0.0}
	\definecolor{cfv0}{rgb}{1.0,1.0,1.0}
	\definecolor{clv0}{rgb}{0.0,0.0,0.0}
	\definecolor{cv1}{rgb}{0.0,0.0,0.0}
	\definecolor{cfv1}{rgb}{1.0,1.0,1.0}
	\definecolor{clv1}{rgb}{0.0,0.0,0.0}
	\definecolor{cv2}{rgb}{0.0,0.0,0.0}
	\definecolor{cfv2}{rgb}{1.0,1.0,1.0}
	\definecolor{clv2}{rgb}{0.0,0.0,0.0}
	\definecolor{cv3}{rgb}{0.0,0.0,0.0}
	\definecolor{cfv3}{rgb}{1.0,1.0,1.0}
	\definecolor{clv3}{rgb}{0.0,0.0,0.0}
	\definecolor{cv4}{rgb}{0.0,0.0,0.0}
	\definecolor{cfv4}{rgb}{1.0,1.0,1.0}
	\definecolor{clv4}{rgb}{0.0,0.0,0.0}
	\definecolor{cv5}{rgb}{0.0,0.0,0.0}
	\definecolor{cfv5}{rgb}{1.0,1.0,1.0}
	\definecolor{clv5}{rgb}{0.0,0.0,0.0}
	\definecolor{cv0v1}{rgb}{0.0,0.0,0.0}
	\definecolor{cv0v3}{rgb}{0.0,0.0,0.0}
	\definecolor{cv1v0}{rgb}{0.0,0.0,0.0}
	\definecolor{cv1v2}{rgb}{0.0,0.0,0.0}
	\definecolor{cv2v0}{rgb}{0.0,0.0,0.0}
	\definecolor{cv2v1}{rgb}{0.0,0.0,0.0}
	\definecolor{cv2v3}{rgb}{0.0,0.0,0.0}
	\definecolor{cv2v5}{rgb}{0.0,0.0,0.0}
	\definecolor{cv3v0}{rgb}{0.0,0.0,0.0}
	\definecolor{cv3v4}{rgb}{0.0,0.0,0.0}
	\definecolor{cv4v0}{rgb}{0.0,0.0,0.0}
	\definecolor{cv4v1}{rgb}{0.0,0.0,0.0}
	\definecolor{cv4v3}{rgb}{0.0,0.0,0.0}
	\definecolor{cv4v5}{rgb}{0.0,0.0,0.0}
	\definecolor{cv5v1}{rgb}{0.0,0.0,0.0}
	\definecolor{cv5v2}{rgb}{0.0,0.0,0.0}
	\definecolor{cv5v3}{rgb}{0.0,0.0,0.0}
	\definecolor{cv5v4}{rgb}{0.0,0.0,0.0}
\Vertex[style={minimum size=1.0cm,draw=cv0,fill=cfv0,text=clv0,shape=circle},LabelOut=false,L=\hbox{$0$},x=0cm,y=0.0cm]{v0}
\Vertex[style={minimum size=1.0cm,draw=cv1,fill=cfv1,text=clv1,shape=circle},LabelOut=false,L=\hbox{$1$},x=0cm,y=1.75cm]{v1}
\Vertex[style={minimum size=1.0cm,draw=cv2,fill=cfv2,text=clv2,shape=circle},LabelOut=false,L=\hbox{$2$},x=1.25cm,y=3cm]{v2}
\Vertex[style={minimum size=1.0cm,draw=cv3,fill=cfv3,text=clv3,shape=circle},LabelOut=false,L=\hbox{$3$},x=3cm,y=1.25cm]{v3}
\Vertex[style={minimum size=1.0cm,draw=cv4,fill=cfv4,text=clv4,shape=circle},LabelOut=false,L=\hbox{$4$},x=1.75cm,y=0cm]{v4}
\Vertex[style={minimum size=1.0cm,draw=cv5,fill=cfv5,text=clv5,shape=circle},LabelOut=false,L=\hbox{$5$},x=3cm,y=3cm]{v5}
	\Edge[lw=0.1cm,style={post, color=cv0v1,},](v0)(v1)
\Edge[lw=0.1cm,style={post, color=cv0v3,},](v0)(v3)
\Edge[lw=0.1cm,style={post, color=cv1v0,},](v1)(v0)
\Edge[lw=0.1cm,style={post, color=cv1v2,},](v1)(v2)
\Edge[lw=0.1cm,style={post, color=cv2v0,},](v2)(v0)
\Edge[lw=0.1cm,style={post, color=cv2v1,},](v2)(v1)
\Edge[lw=0.1cm,style={post, color=cv2v3,},](v2)(v3)
\Edge[lw=0.1cm,style={post, color=cv2v5,},](v2)(v5)
\Edge[lw=0.1cm,style={post, color=cv3v0,},](v3)(v0)
\Edge[lw=0.1cm,style={post, color=cv3v4,},](v3)(v4)
\Edge[lw=0.1cm,style={post, color=cv4v0,},](v4)(v0)
\Edge[lw=0.1cm,style={post, color=cv4v1,},](v4)(v1)
\Edge[lw=0.1cm,style={post, color=cv4v3,},](v4)(v3)
\Edge[lw=0.1cm,style={post, color=cv4v5,},](v4)(v5)
\Edge[lw=0.1cm,style={post, color=cv5v1,},](v5)(v1)
\Edge[lw=0.1cm,style={post, color=cv5v2,},](v5)(v2)
\Edge[lw=0.1cm,style={post, color=cv5v3,},](v5)(v3)
\Edge[lw=0.1cm,style={post, color=cv5v4,},](v5)(v4)
	\end{tikzpicture}

	\caption{Digraph with $\rad=2.5$ and minimum Wiener index for $n \le 6$}
	\label{fig:digraph_rad2.5}
\end{figure}

The strategies used before, are also harder in this case.
The analog of Proposition~\ref{proplem} for radius does not give a unique nor clear configuration or substructure for example. Some examples for this are presented in Figure~\ref{fig:digraph_rad} and Figure~\ref{fig:digraph_rad3.5} in case the radius is $3$ or $3.5$.
We note that taking a blow-up in vertices $2$ or $4$ of these configurations gives the minimum total distance up to a constant, as a corollary of the ideas used before.
We give a sketch of that partial result in this section for $r \ge \frac 72.$
 
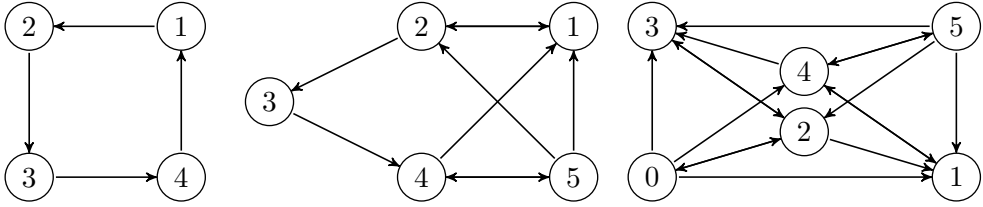
\begin{figure}[h]
	\centering

	\begin{tikzpicture}
	\definecolor{cv0}{rgb}{0.0,0.0,0.0}
	\definecolor{c}{rgb}{1.0,1.0,1.0}

	\Vertex[L=\hbox{$1$},x=4cm,y=1cm]{v0}
	\Vertex[L=\hbox{$2$},x=2.0cm,y=1]{v1}
	\Vertex[L=\hbox{$3$},x=2cm,y=-1cm]{v3}
	\Vertex[L=\hbox{$4$},x=4cm,y=-1]{v4}
	
	\Edge[lw=0.1cm,style={post, right}](v0)(v1)
	\Edge[lw=0.1cm,style={post, right}](v1)(v3)
	\Edge[lw=0.1cm,style={post, right}](v3)(v4)
	\Edge[lw=0.1cm,style={post, right}](v4)(v0)
	\end{tikzpicture}	
	\quad
	\begin{tikzpicture}
	\definecolor{cv0}{rgb}{0.0,0.0,0.0}
	\definecolor{c}{rgb}{1.0,1.0,1.0}

	\Vertex[L=\hbox{$1$},x=4cm,y=1cm]{v0}
	\Vertex[L=\hbox{$2$},x=2.0cm,y=1]{v1}
	\Vertex[L=\hbox{$3$},x=0cm,y=0cm]{v2}
	\Vertex[L=\hbox{$4$},x=2cm,y=-1cm]{v3}
	\Vertex[L=\hbox{$5$},x=4cm,y=-1]{v4}
	
	\Edge[lw=0.1cm,style={post, right}](v0)(v1)
	\Edge[lw=0.1cm,style={post, right}](v1)(v0)
	\Edge[lw=0.1cm,style={post, right}](v1)(v2)
	\Edge[lw=0.1cm,style={post, right}](v4)(v1)
	\Edge[lw=0.1cm,style={post, right}](v4)(v3)
	\Edge[lw=0.1cm,style={post, right}](v2)(v3)
	\Edge[lw=0.1cm,style={post, right}](v3)(v4)
	\Edge[lw=0.1cm,style={post, right}](v3)(v0)
	
	\Edge[lw=0.1cm,style={post, right}](v4)(v0)
	\end{tikzpicture}\quad
	\begin{tikzpicture}
	\definecolor{cv0}{rgb}{0.0,0.0,0.0}
	\definecolor{cfv0}{rgb}{1.0,1.0,1.0}
	\definecolor{clv0}{rgb}{0.0,0.0,0.0}
	\definecolor{cv1}{rgb}{0.0,0.0,0.0}
	\definecolor{cfv1}{rgb}{1.0,1.0,1.0}
	\definecolor{clv1}{rgb}{0.0,0.0,0.0}
	\definecolor{cv2}{rgb}{0.0,0.0,0.0}
	\definecolor{cfv2}{rgb}{1.0,1.0,1.0}
	\definecolor{clv2}{rgb}{0.0,0.0,0.0}
	\definecolor{cv3}{rgb}{0.0,0.0,0.0}
	\definecolor{cfv3}{rgb}{1.0,1.0,1.0}
	\definecolor{clv3}{rgb}{0.0,0.0,0.0}
	\definecolor{cv4}{rgb}{0.0,0.0,0.0}
	\definecolor{cfv4}{rgb}{1.0,1.0,1.0}
	\definecolor{clv4}{rgb}{0.0,0.0,0.0}
	\definecolor{cv5}{rgb}{0.0,0.0,0.0}
	\definecolor{cfv5}{rgb}{1.0,1.0,1.0}
	\definecolor{clv5}{rgb}{0.0,0.0,0.0}
	\definecolor{cv0v1}{rgb}{0.0,0.0,0.0}
	\definecolor{cv0v2}{rgb}{0.0,0.0,0.0}
	\definecolor{cv0v3}{rgb}{0.0,0.0,0.0}
	\definecolor{cv0v4}{rgb}{0.0,0.0,0.0}
	\definecolor{cv1v3}{rgb}{0.0,0.0,0.0}
	\definecolor{cv2v0}{rgb}{0.0,0.0,0.0}
	\definecolor{cv2v1}{rgb}{0.0,0.0,0.0}
	\definecolor{cv2v4}{rgb}{0.0,0.0,0.0}
	\definecolor{cv3v1}{rgb}{0.0,0.0,0.0}
	\definecolor{cv3v4}{rgb}{0.0,0.0,0.0}
	\definecolor{cv3v5}{rgb}{0.0,0.0,0.0}
	\definecolor{cv4v2}{rgb}{0.0,0.0,0.0}
	\definecolor{cv5v1}{rgb}{0.0,0.0,0.0}
	\definecolor{cv5v2}{rgb}{0.0,0.0,0.0}
	\definecolor{cv5v3}{rgb}{0.0,0.0,0.0}
	\definecolor{cv5v4}{rgb}{0.0,0.0,0.0}
	\Vertex[style={minimum size=1.0cm,draw=cv0,fill=cfv0,text=clv0,shape=circle},LabelOut=false,L=\hbox{$0$},x=0cm,y=0.0cm]{v0}
	\Vertex[style={minimum size=1.0cm,draw=cv1,fill=cfv1,text=clv1,shape=circle},LabelOut=false,L=\hbox{$1$},x=4.0cm,y=0cm]{v1}
	\Vertex[style={minimum size=1.0cm,draw=cv2,fill=cfv2,text=clv2,shape=circle},LabelOut=false,L=\hbox{$2$},x=2cm,y=0.6cm]{v2}
	\Vertex[style={minimum size=1.0cm,draw=cv3,fill=cfv3,text=clv3,shape=circle},LabelOut=false,L=\hbox{$4$},x=2cm,y=1.4cm]{v3}
	\Vertex[style={minimum size=1.0cm,draw=cv4,fill=cfv4,text=clv4,shape=circle},LabelOut=false,L=\hbox{$3$},x=0.0cm,y=2cm]{v4}
	\Vertex[style={minimum size=1.0cm,draw=cv5,fill=cfv5,text=clv5,shape=circle},LabelOut=false,L=\hbox{$5$},x=4cm,y=2cm]{v5}
	\Edge[lw=0.1cm,style={post, color=cv0v1,},](v0)(v1)
	\Edge[lw=0.1cm,style={post, color=cv0v2,},](v0)(v2)
	\Edge[lw=0.1cm,style={post, color=cv0v3,},](v0)(v3)
	\Edge[lw=0.1cm,style={post, color=cv0v4,},](v0)(v4)
	\Edge[lw=0.1cm,style={post, color=cv1v3,},](v1)(v3)
	\Edge[lw=0.1cm,style={post, color=cv2v0,},](v2)(v0)
	\Edge[lw=0.1cm,style={post, color=cv2v1,},](v2)(v1)
	\Edge[lw=0.1cm,style={post, color=cv2v4,},](v2)(v4)
	\Edge[lw=0.1cm,style={post, color=cv3v1,},](v3)(v1)
	\Edge[lw=0.1cm,style={post, color=cv3v4,},](v3)(v4)
	\Edge[lw=0.1cm,style={post, color=cv3v5,},](v3)(v5)
	\Edge[lw=0.1cm,style={post, color=cv4v2,},](v4)(v2)
	\Edge[lw=0.1cm,style={post, color=cv5v1,},](v5)(v1)
	\Edge[lw=0.1cm,style={post, color=cv5v2,},](v5)(v2)
	\Edge[lw=0.1cm,style={post, color=cv5v3,},](v5)(v3)
	\Edge[lw=0.1cm,style={post, color=cv5v4,},](v5)(v4)
	\end{tikzpicture}
	
	\centering

	\caption{Digraphs with $\rad=3$ and smallest Wiener index for $n\le 6$}
	\label{fig:digraph_rad}
\end{figure}

\begin{figure}[h]
	\centering

	\begin{tikzpicture}
	\definecolor{cv0}{rgb}{0.0,0.0,0.0}
	\definecolor{c}{rgb}{1.0,1.0,1.0}

\Vertex[L=\hbox{$1$},x=4cm,y=1cm]{v0}
\Vertex[L=\hbox{$2$},x=2.0cm,y=1]{v1}
\Vertex[L=\hbox{$3$},x=0cm,y=0cm]{v2}
\Vertex[L=\hbox{$4$},x=2cm,y=-1cm]{v3}
\Vertex[L=\hbox{$5$},x=4cm,y=-1]{v4}

\Edge[lw=0.1cm,style={post, right}](v0)(v1)
\Edge[lw=0.1cm,style={post, right}](v1)(v0)
\Edge[lw=0.1cm,style={post, right}](v1)(v2)
\Edge[lw=0.1cm,style={post, right}](v4)(v3)
\Edge[lw=0.1cm,style={post, right}](v2)(v3)
\Edge[lw=0.1cm,style={post, right}](v3)(v4)
\Edge[lw=0.1cm,style={post, right}](v4)(v0)

\Edge[lw=0.1cm,style={post, right}](v4)(v0)
\end{tikzpicture}	
\quad
	\begin{tikzpicture}
	\definecolor{cv0}{rgb}{0.0,0.0,0.0}
	\definecolor{c}{rgb}{1.0,1.0,1.0}

	\Vertex[L=\hbox{$2$},x=2.5cm,y=0.8cm]{v0}
	\Vertex[L=\hbox{$1$},x=4cm,y=0]{v1}
	\Vertex[L=\hbox{$3$},x=0cm,y=0cm]{v2}
	\Vertex[L=\hbox{$4$},x=0cm,y=2.5cm]{v3}
	\Vertex[L=\hbox{$5$},x=4cm,y=2.5]{v4}
	\Vertex[L=\hbox{$6$},x=2cm,y=1.9]{v5}
	
	\Edge[lw=0.1cm,style={post, right}](v0)(v1)
	\Edge[lw=0.1cm,style={post, right}](v1)(v0)
	\Edge[lw=0.1cm,style={post, right}](v0)(v2)
	\Edge[lw=0.1cm,style={post, right}](v2)(v0)
	\Edge[lw=0.1cm,style={post, right}](v2)(v1)
	\Edge[lw=0.1cm,style={post, right}](v2)(v3)
	\Edge[lw=0.1cm,style={post, right}](v2)(v5)
	\Edge[lw=0.1cm,style={post, right}](v4)(v1)
	\Edge[lw=0.1cm,style={post, right}](v3)(v4)
	\Edge[lw=0.1cm,style={post, right}](v4)(v3)
	\Edge[lw=0.1cm,style={post, right}](v3)(v5)
	\Edge[lw=0.1cm,style={post, right}](v5)(v3)	
	\Edge[lw=0.1cm,style={post, right}](v4)(v5)
	
	\Edge[lw=0.1cm,style={post, right}](v0)(v5)
	
	\end{tikzpicture}
	\caption{Digraphs with $\rad=3.5$ and minimum Wiener index for $n \in \{5,6\}$}
	\label{fig:digraph_rad3.5}
\end{figure}
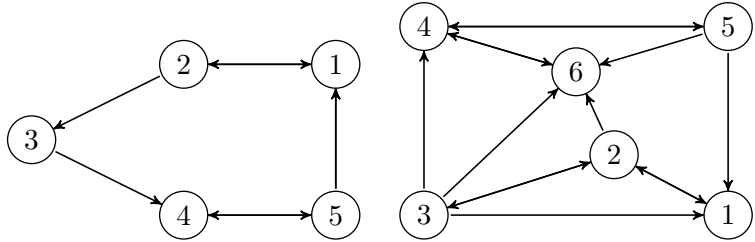

We now prove the asymptotic result roughly by showing the corresponding version of what has been done for the out-radius. 
Lemma~\ref{lem:g1} does not use the notion of out-radius, but we note that the numbers can be chosen a bit different and can be straightforward generalized to Lemma~\ref{lem:g1'}. Note that the idea behind is exactly the same as in the proof of Markov's inequality if you state it as \hyphenquote{UKenglish}{the probability that a vertex has at least $b$ times the average degree, is at most $\frac 1b$}, where one has to apply this to the complement graph. 

\begin{lem}\label{lem:g1'}
	Let $D$ be a digraph with average total degree at least being equal to $2(n-1)-2t$. Then at most $\frac 1b$ of the vertices have a total degree which is at most $2(n-1)-2b t.$ Also at most $\frac 1b$ of the vertices have in-degree at most $(n-1)-bt$ and the same holds for out-degree being at most $(n-1)-bt$.	
	Furthermore $D$ contains a bidirected clique of size at least $\frac{n}{8t}$ all of whose vertices have total degree more than $2(n-1)-4t.$ 
\end{lem}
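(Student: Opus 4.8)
The plan is to follow exactly the hint preceding the statement: pass to the complement digraph $D^c$, apply Markov's inequality there, and then recover the bidirected clique by the greedy argument already used in Lemma~\ref{lem1} and Lemma~\ref{lem:g1}. First I would set up the degree bookkeeping in $D^c$. Since every vertex has total degree at most $2(n-1)$ in $D$, a vertex $v$ has total degree $2(n-1)-\deg_D(v)$ in $D^c$, so the hypothesis that the average total degree of $D$ is at least $2(n-1)-2t$ is equivalent to saying that the average total degree of $D^c$ is at most $2t$. Splitting into in- and out-contributions, the average in-degree and the average out-degree of $D^c$ are each at most $t$, because the sum of in-degrees and the sum of out-degrees both equal the number of arcs of $D^c$.

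For the first three claims I would apply Markov's inequality to the appropriate degree in $D^c$. A vertex has total degree at most $2(n-1)-2bt$ in $D$ precisely when it has total degree at least $2bt$ in $D^c$; since the average is at most $2t$, Markov bounds the fraction of such vertices by $\frac{2t}{2bt}=\frac1b$. Similarly, a vertex has in-degree at most $(n-1)-bt$ in $D$ exactly when its in-degree in $D^c$ is at least $bt$, and Markov applied to the in-degree (average at most $t$) bounds the fraction by $\frac{t}{bt}=\frac1b$; the out-degree statement is symmetric.

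For the final clique statement I would reuse the argument of Lemma~\ref{lem:g1} essentially verbatim. Taking $b=2$ in the first claim shows that at least $\frac n2$ vertices have total degree strictly more than $2(n-1)-4t$; call this set $S$. Each such vertex fails to be a bidirected neighbour of fewer than $4t$ other vertices, since each non-bidirected-neighbour $w$ contributes at least one missing arc (i.e. at least one of $\vc{vw},\vc{wv}$) to the total degree of $v$ in $D^c$, which is less than $4t$. Running the greedy procedure of Lemma~\ref{lem1} — repeatedly pick a vertex $v$ of the current set and intersect it with its bidirected neighbourhood $N^+(v)\cap N^-(v)$ — builds a bidirected clique inside $S$ while discarding at most $4t$ vertices per step, so it lasts at least $\frac{|S|}{4t}\ge\frac{n}{8t}$ steps and produces the desired clique, all of whose vertices retain total degree more than $2(n-1)-4t$.

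I do not expect a genuine obstacle here: the content is the one-line Markov computation together with the already-established greedy clique extraction. The only point that needs a little care is the translation of the degree thresholds between $D$ and $D^c$, together with the separate in-/out-degree bookkeeping, making sure the direction of each inequality is reversed correctly under complementation.
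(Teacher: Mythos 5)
Your proof is correct and takes exactly the route the paper intends: the paper states Lemma~\ref{lem:g1'} without a written proof, pointing only to Markov's inequality applied to the complement digraph together with the greedy clique extraction of Lemma~\ref{lem1} and Lemma~\ref{lem:g1}, which is precisely your argument. Your degree bookkeeping in $D^c$ (average total degree at most $2t$, average in- and out-degree at most $t$) and the $b=2$ specialization feeding the greedy construction are both handled correctly.
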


The analog of Lemma~\ref{lem:g2} and Lemma~\ref{lem2di} needs some more care and we only prove $r \ge \frac 72$ to avoid even more details. 

	\begin{lem}\label{lem2di'}

		Let $r \ge \frac 72$. Let $a= \lfloor (r-0.5)^2 \rfloor.$
		There is a value $n_0(r)$ such that for any $n \ge n_0$ and any digraph $D$ of order $n$ and radius $r$ with $W(D)<2\binom{n}{2}+a n$ for some positive integer $a$, there is a vertex $v \in D$ such that $D \backslash v$ has outradius $r$ and the distance between any $2$ vertices of $D \backslash v$ equals the distance between them in $D.$

\end{lem}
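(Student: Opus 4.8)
The plan is to mirror the proof of Lemma~\ref{lem2di}, adapting each step from out-radius to the (average) radius, the point being to find $v\in D$ whose deletion preserves both the restricted distance function and the radius $r$. First I would turn the total-distance bound into a density statement exactly as in Lemma~\ref{lem2di}: from $W(D)<2\binom{n}{2}+an$ one gets that at most $an$ ordered pairs are at distance $\ge 2$, so the size satisfies $|A|\ge n(n-1-a)$ and the average total degree is at least $2(n-1)-2a$. Lemma~\ref{lem:g1'} with $t=a$ then yields a bidirected clique $K_k$ with $k\ge \frac{n}{8a}$ whose vertices all have total degree exceeding $2(n-1)-4a$. Exactly as in Lemma~\ref{lem2di} I would extract a distance-preservation set $S\subseteq K_k$ with $|S|=O(a^2)$ so that deleting any $v\in K_k\backslash S$ leaves every pairwise distance among the remaining vertices unchanged; this construction only uses the bidirected adjacencies inside $K_k$ and is unaffected by the switch to radius.

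The genuinely new content is controlling the radius. Since distances between surviving vertices are preserved, deleting $v$ can only decrease each of $\ecc^+(u)$ and $\ecc^-(u)$, and only by at most one (a shortest path to the deleted farthest vertex still exhibits a vertex one step closer). Hence each eccentricity sum $\ecc^+(u)+\ecc^-(u)$ drops by at most two, so $\rad(D\backslash v)$ stays within $\tfrac12$ of $\rad(D)=r$. As in Lemma~\ref{lem2}, the radius can only go \emph{up} when $v$ is the unique centre, which rules out at most one vertex. The crux is therefore to bound the set $T$ of $v\in K_k\backslash S$ whose deletion drops the radius below $r$: such a $v$ must be the unique farthest out-vertex, or the unique farthest in-vertex, of some near-centre $u$, meaning a vertex with $\ecc^+(u)+\ecc^-(u)\le 2r+1$, since only then can $u$'s eccentricity sum fall below $2r$.

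I would then split according to the size of the eccentricity being witnessed. Suppose $v$ is the unique out-farthest vertex of $u$ and $d(u,v)=\ecc^+(u)\ge 3$. If $u$ had any out-arc into $K_k$ then $d(u,\cdot)\le 2$ on all of the bidirected clique, contradicting $d(u,v)\ge 3$; so $u$ is out-deficient and misses at least $k$ out-arcs. As the total number of missing arcs is at most $an$, there are at most $\frac{an}{k}\le 8a^2$ such $u$, and each spoils at most one $v$. The symmetric in-direction argument is identical. This disposes of every case in which the \emph{dropping} eccentricity is at least $3$, contributing $O(a^2)$ vertices to $T$.

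The hard part, and the reason the statement is restricted to $r\ge\frac72$, is the remaining case, in which the dropping eccentricity equals $2$. Here $u$ out-dominates $V\backslash\{v\}$, so it is \emph{not} deficient, while its complementary eccentricity is forced to be $\ge 2r-2\ge 5$ precisely because $r\ge\frac72$; such a vertex contributes at least $\binom{2r-2}{2}$ to the total excess $\sum_{(x,y)}\!\big(d(x,y)-1\big)<an$ through a long in- or out-path. The naive charge of each distinct spoiled clique vertex to its witness $u$, and of $u$ to this long path, only gives a bound of order $an/\binom{2r-2}{2}=\Theta(n)$, which is far too weak against $k\approx \frac{n}{8a}$. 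The main obstacle is thus to improve this to $o(k)$, and I expect the resolution to exploit the interaction between these dominating centres: the long paths realising their large complementary eccentricities cannot be chosen independently, since shared far-witnesses create shortcuts that would collapse the eccentricities back below $2r-2$. Making this quantitative, with the exact value $a=\lfloor (r-0.5)^2\rfloor$ tracking the extremal coefficient, is exactly the ``more care'' the non-symmetry of the distance function forces. Granting a bound $|T|=O(a^2)$, we get $|S|+|T|+1<k$, so some $v\in K_k\backslash(S\cup T)$ avoiding the unique centre exists and satisfies the conclusion.
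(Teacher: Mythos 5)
Your proposal mirrors the paper's proof faithfully up to and including the case where the dropping eccentricity is at least $3$ (density bound, Lemma~\ref{lem:g1'}, the set $S$ as in Lemma~\ref{lem:g2}, the unique vertex $z^*$ whose deletion raises the radius, and the $\frac{an}{k}$ bound on deficient near-centres), and you correctly isolate both the hard case --- a witness $w$ with $d(w,z)=2$ that dominates $V\backslash\{w,z\}$ --- and the fact that $r\ge\frac72$ must enter there. But this is exactly where you have a genuine gap: you do not prove any bound on the number of such dominating centres, you explicitly write ``granting a bound $|T|=O(a^2)$,'' and the mechanism you speculate about (shared far-witnesses creating shortcuts, so the long paths cannot be chosen independently) is not what closes the argument, nor is it clear it could be made to work. (A minor slip along the way: if both eccentricities drop by at most one, the radius drops by at most $1$, not $\tfrac12$.)

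The paper's actual resolution of the hard case is a global counting argument against the hypothesis $W(D)<2\binom{n}{2}+an$, not a refined bound of order $a^2$ on the centres. Suppose there were at least $b(r)a+1$ such pairs $(w,z)$. By the Markov-type part of Lemma~\ref{lem:g1'}, all but $\frac nb$ vertices have an out-arc to at least one of these $w$'s; since each $w$ dominates everything except $z$, and all but $4a$ vertices have an arc towards the corresponding $z\in K_k$, at least $\left(1-\frac1b\right)n-4a$ vertices have out-eccentricity at most $2$. Call this set $U$. Because the radius is $r$, every $u\in U$ has in-eccentricity at least $2r-2$, and every vertex at in-distance $3,\dots,2r-2$ from $u$ lies outside $U$ (members of $U$ are within out-distance $2$ of everything). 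Hence
\begin{equation*}
\sum_{u \in U,\ v \in V \backslash U} \bigl( d(v,u) -1 \bigr) \ \ge\ \lvert U\rvert \sum_{i=3}^{2r-2}(i-1) \ =\ \lvert U\rvert\,(r-2)(2r-1),
\end{equation*}
with no double counting since distinct ordered pairs are summed. Together with at least $\frac n2$ further pairs at distance exactly $2$, the per-vertex excess is at least $(r-2)(2r-1)+\frac12$, which exceeds $a=\lfloor(r-0.5)^2\rfloor$ precisely when $r\ge\frac72$; for $b$ and $n$ large this contradicts $W(D)<2\binom{n}{2}+an$. So there are at most $b(r)a+1$ dominating centres in each direction --- a constant depending only on $r$ --- and then $k\ge 32a^2+4a+1+2\frac{an}{k}+2(b(r)a+2)$ leaves a usable vertex $v$. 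In short, the missing idea is that an abundance of dominating centres forces \emph{almost all} vertices of $D$ (not merely the centres themselves) to carry a long in-path, which is incompatible with the assumed Wiener bound; your charging scheme, which only taxes the witnesses, cannot see this.
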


\begin{proof}
	Note that the size of the digraph is at least $n(n-1-a)$ again and by Lemma~\ref{lem:g1'} there exists a bidirected clique  $K_k$ with $k \ge \frac n {8a}$ all of whose vertices have total degree more than $2(n-1)-4a.$
	As was done in Lemma~\ref{lem:g2}, we know there exists a set $S$ of at most $32a^2+4a+1$ vertices of $K_k$ 
	such that for any $2$ vertices $x,y$ in $D \backslash K_k$ for which there exists a vertex $v \in K_k$ such that $\vc{xv}$ and $\vc{vy}$ are edges of $D$, there is an $s \in S$ with $\vc{xs}$ and $\vc{sy}$ being edges of $D$. Also for any vertex $v$ in $D \backslash K_k$ for which there is an arc towards or from a vertex $u$ in $K_k$, there is such an arc $\vc{vs}$ or $\vc{sv}$ for some $s \in S.$
	This implies that both the in- and outeccentricities of the remaining vertices in $D\backslash z$ for any $z \in K_k \backslash S$ cannot increase.
	As a consequence, there is at most one vertex $z^* \in K_k \backslash S$ such that the radius of $D \backslash z^*$ is larger than $r.$
	Next, we invest vertices $z \in K_k \backslash S$ such that the radius in $D\backslash z$ becomes smaller.
	Then there must be a vertex $w$ for which $\ecc^+$ or $\ecc^-$ strictly decreases.
	Similarly as in Lemma~\ref{lem:g2}, in case $d(w,z)$ or $d(z,w)$ equals at least $3$ and the distance from or towards the other vertices is smaller, we can see that there is only one choice for $z$ such that this is the case for a particular $w$ and there are most $\frac{an}{k}$ of them in any of the two directions.
	
	Now look to the number of vertices $z \in K_k \backslash S$ such that $d(w,z)=2$ for a choice of $w$ and $d(w,v)=1$ for all vertices $v$ different from $w$ and $z.$ Assume there are at least $b(r)a+1$ of them. 
	Then as a corollary of Lemma~\ref{lem:g1'} at least $\left( 1- \frac1b\right)n$ vertices are connected with at least one of these vertices $w$.
	Since at most $4a$ vertices do not have a directed edge towards the corresponding $z$, there are at least $\left( 1- \frac1b\right)n-4a$ vertices having out-eccentricity at most $2$.
	So all of them have in-eccentricity at least $2r-2$. Let $U$ be the set of these vertices.
	For every $u$, the vertices $v$ for which $d(v,u)=m$ for any $3 \le m \le 2r-2$ do not belong to $U,$ as $v \in U$ implies $d(v,V)\le 2.$
	This implies that 
	$$\sum_{u \in U, v \in V \backslash U} \left( d(v,u) -1 \right) \ge 
	\left( \left( 1- \frac1b\right)n-4a \right) \cdot \left( \sum_{i=3}^{2r-2} (i-1) \right).$$
	There are also at least $\frac n2$ pairs of vertices $x,y \in V$ such that $d(x,y)=2.$
	For $b$ and $n$ sufficiently large, this would lead to a contradiction with $W(D)< 2 \binom{n}2 + an$ since 
	$\frac 12 + \sum_{i=3}^{2r-2} (i-1)=(r-2)(2r-1) + \frac 12 > a$ when $r \ge \frac 72.$
	Similarly there are less than $b(r)a+1$ vertices $z \in K_k \backslash S$ such that $d(w,z)=2$ for a choice of $w$ and $d(w,v)=1$ for all vertices $v \not =z.$
	If $n_0$ (and $b$) is large enough to conclude $k \ge 32a^2+4a+1 + 2 \frac{an}{k} + 2(b(r)a+2)$, we know there exists a vertex satisfying the conditions of the lemma. 	
\end{proof}

The analog of Proposition~\ref{proplem} without characterization is immediate.

\begin{prop}\label{proplem'}
	Let $D=(V,A)$ be a digraph, with radius $r.$

	Then \begin{equation}\label{Part1'}
	\sum_{v \in V \backslash x} \left( d(x,v)-1+d(v,x)-1 \right) \ge \lfloor (r-0.5)^2 \rfloor. \end{equation} 
\end{prop}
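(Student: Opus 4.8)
The plan is to exploit that---unlike in Proposition~\ref{proplem}---the quantity to be bounded decouples into an out-going part and an in-coming part that may be estimated independently; this decoupling is precisely why no equality characterization is needed here. The only structural input I would use is the definition of radius: since $\rad(D)=\min_{y\in V}\tfrac12\!\left(\ecc^+(y)+\ecc^-(y)\right)=r$, every vertex $y$ satisfies $\ecc^+(y)+\ecc^-(y)\ge 2r$; in particular, writing $p:=\ecc^+(x)$ and $q:=\ecc^-(x)$ for the vertex $x$ of the statement, we have $p+q\ge 2r$. Because $W(D)$ is finite the digraph is strongly connected, so $p,q$ are finite nonnegative integers and each term $d(x,v)-1$ and $d(v,x)-1$ is nonnegative.

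Next I would bound the two halves of the left-hand side of \eqref{Part1'} separately. Fixing a shortest directed path $x=u_0u_1\cdots u_p$ realising the out-eccentricity, one has $d(x,u_i)=i$, so keeping only these $p$ vertices and discarding the remaining nonnegative terms gives
$$\sum_{v\in V\backslash x}\left(d(x,v)-1\right)\ge\sum_{i=1}^{p}(i-1)=\binom{p}{2}.$$
The same argument along a shortest path realising the in-eccentricity gives $\sum_{v\in V\backslash x}(d(v,x)-1)\ge\binom{q}{2}$. Since the first estimate uses only out-distances and the second only in-distances, there is no double counting even when the two paths share vertices, and adding them bounds the left-hand side of \eqref{Part1'} below by $\binom{p}{2}+\binom{q}{2}$.

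It then remains to minimise $\binom{p}{2}+\binom{q}{2}=\tfrac12(p^2+q^2)-\tfrac12(p+q)$ over nonnegative integers with $p+q\ge 2r$. By convexity (equivalently QM--AM), for fixed $s=p+q$ the expression is smallest when $p$ and $q$ are as balanced as possible, and this balanced value is increasing in $s$; hence the global minimum occurs at $s=2r$, which is an integer since $r\in\tfrac12\mathbb Z$. A short parity split finishes it: for integral $r$ the optimum is $p=q=r$, giving $r(r-1)$, and for half-integral $r=k+\tfrac12$ it is $\{p,q\}=\{k,k+1\}$, giving $k^2$; in both cases the value equals $\lfloor(r-0.5)^2\rfloor$, as claimed.

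The only step requiring any care is this final rounding, where $\binom{p}{2}+\binom{q}{2}$ must be matched to $\lfloor(r-0.5)^2\rfloor$ across the two parities of $2r$; everything preceding it is routine. This is in sharp contrast with Proposition~\ref{proplem}, in which a single shortest out-path is coupled to the forced return distances $d(u_i,v)$, creating genuine interaction and thus the delicate extremal analysis there; here the in/out decoupling eliminates that interaction altogether, which is exactly why the equality characterization can be dropped.
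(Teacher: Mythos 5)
Your proof is correct and follows essentially the same route as the paper: both use the definition of radius to get $\ecc^+(x)+\ecc^-(x)\ge 2r$, lower bound the out-sum and in-sum separately by $\binom{p}{2}$ and $\binom{q}{2}$ via shortest paths, and then minimize over the integer constraint $p+q\ge 2r$. The only difference is presentational -- you spell out the convexity/parity argument that the paper compresses into the single inequality $\frac{a(a-1)}{2}+\frac{b(b-1)}{2}\ge \frac{\lfloor r\rfloor^2+\lceil r\rceil^2-2r}{2}=\lfloor (r-0.5)^2\rfloor$.
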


\begin{proof}
	By definition of the radius, for every vertex $x \in V$, we have $d(x,V)+d(V,x)\ge 2r$. Let $a=d(x,V)$ and $b=d(V,x).$
	Then $$\sum_{v \in V \backslash x} \left( d(x,v)-1+d(v,x)-1 \right) \ge \frac{a(a-1)}2+\frac{b(b-1)}2 \ge \frac{ \lfloor r \rfloor^2 +  \lceil r \rceil^2 - 2r}{2}= \lfloor (r-0.5)^2 \rfloor. \qedhere$$
\end{proof}

\begin{thr}\label{min_rad}
	For $r \ge \frac 72$, the minimum Wiener index among all digraphs with radius $r$ and order $n$ is of the form $2\binom{n}2 + \lfloor (r-0.5)^2 \rfloor n - \Theta_r(1).$
\end{thr}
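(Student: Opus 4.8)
The plan is to establish matching bounds $2\binom{n}{2}+an-O_r(1)\le W(D)\le 2\binom{n}{2}+an-\Omega_r(1)$ over all digraphs $D$ of order $n$ and radius $r$, where $a:=\lfloor(r-0.5)^2\rfloor$; together these give the claimed $2\binom{n}{2}+an-\Theta_r(1)$. The lower bound recycles the reduction scheme of Section~\ref{AsProofChendigraph} almost verbatim, now fed by Lemma~\ref{lem2di'} and Proposition~\ref{proplem'}; the upper bound is an explicit blow-up construction whose total distance I would evaluate exactly as in Lemma~\ref{lem:compu}.

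For the lower bound, fix $n_0(r)$ as in Lemma~\ref{lem2di'} and set $n_1(r)=n_0+an_0$. Given $n\ge n_1$, let $D$ have minimum total distance among order-$n$ radius-$r$ digraphs; the construction below shows $W(D)<2\binom{n}{2}+an$, so Lemma~\ref{lem2di'} applies and, iterated, yields a chain $D=D_n,D_{n-1},\dots,D_{n_0}$ with $D_i=D_{i+1}\backslash v_{i+1}$, each of radius $r$ and inheriting the distance function, and the induction $W(D_i)<2\binom{i}{2}+ai$ closes exactly as in the proof of Theorem~\ref{maindi}. The essential point is that deleting $v_{i+1}$ removes $\sum_{u\ne v_{i+1}}\big(d(v_{i+1},u)+d(u,v_{i+1})\big)$ from the total distance, which Proposition~\ref{proplem'} bounds below by $2i+a$. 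Telescoping down to $D_{n_0}$ gives
\begin{align*}
W(D)&\ge W(D_{n_0})+\sum_{i=n_0}^{n-1}(2i+a)\\
&=2\binom{n}{2}+an+\Big(W(D_{n_0})-2\binom{n_0}{2}-an_0\Big),
\end{align*}
whose bracketed correction depends only on $r$. It is worth noting that summing Proposition~\ref{proplem'} over all vertices simultaneously would only give the weaker coefficient $a/2$; the full coefficient $a$ is a genuine dividend of the vertex-by-vertex peeling.

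For the upper bound I would fix a core digraph $B_r$ on $\Theta(r)$ vertices of radius $r$ generalizing the configurations of Figures~\ref{fig:digraph_rad} and~\ref{fig:digraph_rad3.5}: it carries a central vertex $x$ with $d(x,V)=\lfloor r\rfloor$ and $d(V,x)=\lceil r\rceil$, and a distinguished vertex $v$ (adjacent to $x$ in both directions) whose out-distances to $B_r\backslash v$ realise the profile $1,2,\dots,\lfloor r\rfloor$ and whose in-distances realise $1,2,\dots,\lceil r\rceil$, one vertex per distance and everything else at distance $1$. Blowing $v$ up into a bidirected clique $K_{n-|B_r|+1}$ produces a digraph of radius $r$; since $v$ then sits in the equality case of Proposition~\ref{proplem'} and $a=\tfrac{\lfloor r\rfloor(\lfloor r\rfloor-1)}2+\tfrac{\lceil r\rceil(\lceil r\rceil-1)}2$, each clique vertex contributes precisely $2(\text{order}-1)+a$ as it is peeled. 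Splitting $W$ as in Lemma~\ref{lem:compu} into the clique term $2\binom{n-|B_r|+1}{2}$, the bounded core term $W(B_r\backslash v)$, and the crossing term $(n-|B_r|+1)\big(2(|B_r|-1)+a\big)$ collapses everything to $2\binom{n}{2}+an$ plus an $r$-dependent constant, and — just like the term $-4\binom{r}{3}$ in Lemma~\ref{lem:compu} — this constant should come out strictly negative.

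The main obstacle is the upper bound, not the lower one. One must produce a single core $B_r$ that simultaneously has radius exactly $r$ (so that the blow-up does not inadvertently create a vertex of eccentricity-sum below $2r$, nor leave the center short of $2r$) and places its blow-up vertex in the equality case of Proposition~\ref{proplem'}, and this has to be arranged uniformly for both parities of $2r$ across all $r\ge\tfrac72$. The two delicate verifications are that the radius is preserved under the blow-up — the clique vertices inherit $x$'s distances to and from $v$, and no vertex of $B_r\backslash v$ has its eccentricity-sum pushed below $2r$ — and that the resulting additive constant is strictly negative, which is what upgrades the correction from a mere $O_r(1)$ to a true $-\Theta_r(1)$. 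The small-$n$ examples in the figures already show that the exact minimiser need not be a blow-up of any fixed core, so one cannot hope for an exact formula by this route; but for the asymptotic statement it is enough that the blow-up meets the iteration lower bound up to the bounded term.
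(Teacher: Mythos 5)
Your lower bound is exactly the paper's argument: peel $n-n_0$ vertices using Lemma~\ref{lem2di'}, charge each deletion at least $2(i-1)+a$ via Proposition~\ref{proplem'}, telescope, and absorb the leftover into an $r$-dependent constant (strictly, $W(D_{n_0})$ depends on the digraph reached, not only on $r$, but since all distances are at least $1$ the bracketed correction is at least $-an_0$, which suffices). This half is correct, and your side remark that summing Proposition~\ref{proplem'} over all vertices simultaneously would only yield the coefficient $a/2$ is a sound observation of why the vertex-by-vertex peeling is indispensable.

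The genuine gap is the upper bound, which you leave as a schema: you never exhibit the core $B_r$, and you promote its existence, the preservation of the radius under the blow-up, and the strict negativity of the additive constant to ``the main obstacle,'' to be arranged uniformly over both parities of $2r$. The paper closes this in two lines, and the witness is far simpler than your schema (in particular, no auxiliary central vertex $x$ distinct from the blow-up vertex is needed; the blow-up vertex is itself a center). For $r\in\mathbb{N}$, blow up one vertex of the directed cycle $C_{r+1}$ by a bidirected clique $K_{n-r}$; for $r\in\frac{1}{2}+\mathbb{N}$, take the directed cycle $C_{r+1.5}$, add one reversed arc between two consecutive vertices, and blow up the tail of that arc. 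In either case every clique vertex realises precisely your equality profile --- one vertex at each out-distance $2,\dots,\lfloor r\rfloor$, one at each in-distance $2,\dots,\lceil r\rceil$, and distance $1$ in both directions to everything else --- so the radius is exactly $r$ and each clique vertex contributes $2(m-1)+a$ at order $m$; moreover a direct computation gives, for integer $r$, $W=2\binom{n}{2}+an-\frac{r^3-r}{2}$, so the constant is indeed strictly negative, and the half-integer case is analogous. With this explicit witness inserted, your proposal coincides with the paper's proof; as it stands, the $\Theta_r(1)$ statement (which needs the minimum to lie a positive constant \emph{below} $2\binom{n}{2}+an$) is not yet established by your text.
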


\begin{proof}
	First we note that digraphs of the desired form do exist.
	When $r \in \mathbb N$, we can take the blow-up of a vertex of a directed cycle $C_{r+1}.$
	When $r \in \frac{1}{2}+\mathbb N$, we can take a directed cycle $C_{r+1.5}$ with an additional directed edge in the opposite direction between two neighbours.
	Now take a blow-up of the startvertex of that additional directed edge.

	Let $a:=a(r)=\lfloor (r-0.5)^2 \rfloor$. Choose $n_0$ as in Lemma~\ref{lem2di'}.
	Then for $n \ge n_0$ and any extremal digraph $D$ or order $n$ and radius $r$ we know that there are $n-n_0$ vertices satisfying the conclusion of Lemma~\ref{lem2di'}. Let $H$ be the digraph induced by the other $n_0$ vertices.
	We now can compute the Wienerindex in reverse order again using Proposition~\ref{proplem'}. With $a=\lfloor (r-0.5)^2 \rfloor$, we have
	\begin{align*}
	W(D) &\ge W(H)+\sum_{i=n_0+1}^{n} \left(2i-2+ a \right) \\
	&> 2\binom{n_0}{2} + 2\sum_{i'=n_0}^{n-1} i' +(n-n_0)a\\
	&\ge  2\binom{n}{2}+an-  an_0.
	\end{align*}
	Note that $an_0 = \Theta(r)$ to conclude.
\end{proof}

\section{Conclusion}\label{conc}
	The question of determining the minimum total distance among all graphs or digraphs of order $n$ and (out)radius $r$ and characterizing the extremal (di)graphs has been solved for $n$ large enough compared with $r$. This partially solves the conjecture of Chen, Wu and An~\cite{Chen}.
	It might be very challenging to solve the question completely for every order, as there might be sporadic extremal graphs other than $Q_3$ which are also different from those conjectured in the initial conjecture. Also for the digraph version, there might be some counterexamples.
	Note that a digraph with outradius $r$ only needs $n \ge r+1$ instead of $n \ge 2r$. For $n=r+1$, the only digraph with outradius $r$ is the directed cycle $C_{r+1}.$
	For $r+2 \le n \le 2r-1$, the extremal graphs may be blow-ups of $C_{r+1}$, but this is not verified.
	
	The analog for the radius in the digraph case was considered as well.
	Nevertheless, it seems hard to get the exact lowerbound and extremal digraph(s) for all values of $r$ and $n$.
	
	\paragraph{Open access statement.} For the purpose of open access,
	a CC BY public copyright license is applied
	to any Author Accepted Manuscript (AAM)
	arising from this submission.

\bibliographystyle{abbrv}
\bibliography{MaxMuD}

\end{document}